\numberwithin{equation}{section}
\newtheorem{thm}[equation]{Theorem}
\newtheorem{prop}[equation]{Proposition}
\newtheorem{rem}[equation]{Remark}
\newtheorem{lem}[equation]{Lemma}
\newtheorem{corol}[equation]{Corollary}
\providecommand{\norm}[1]{\left\lVert#1\right\rVert}
\title{A few shape optimization results for a\\ biharmonic Steklov problem}
\author{Davide Buoso\footnote{Corresponding author.}\   and Luigi Provenzano}
\date{\ }
\begin{document}

\newcommand{\rea}{\mathbb{R}}
\newcommand{\eps}{\varepsilon}

\maketitle

\noindent
{\bf Abstract:}
We derive the equation of a free vibrating thin plate whose mass is concentrated at the boundary, namely a Steklov problem for the biharmonic operator. We provide Hadamard-type formulas for the shape derivatives of the corresponding eigenvalues and prove that balls are critical domains under volume constraint. Finally, we prove an isoperimetric inequality for the first positive eigenvalue.

\vspace{11pt}

\noindent
{\bf Keywords:} biharmonic operator, Steklov boundary conditions, eigenvalues, isovolumetric perturbations, isoperimetric inequality.

\vspace{6pt}
\noindent
{\bf 2010 Mathematics Subject Classification:} Primary 35J30; Secondary 35C05, 35P15, 49R05, 74K20.

%%%%%%%%%%%%%%%%%%%%%%%%%%%%%%%%%%%%%%%%%%%%%%%%%%%%%%%%%%%%%%%%%%%%%%%%%%%%%%%%%%%%%%%%%%%%%%%%%%%%%%%%%%%%%%%%%%%%%%%%%%%%%%%%%%%%%%%%%%%%%%%%%%%%%%%%%%%%%%%%%%%%%%%%%
%%%%%%%%%%%%%%%%%%%%%%%%%%%%%%%%%%%%%%%%%%%%%%%%%%%%%%%%%%%%%%%%%%%%%%%%%%%%%%%%%%%%%%%%%%%%%%%%%%%%%%%%%%%%%%%%%%%%%%%%%%%%%%%%%%%%%%%%%%%%%%%%%%%%%%%%%%%%%%%%%%%%%%%%%
%%%%%%%%%%%%%%%%%%%%%%%%%%%%%%%%%%%%%%%%%%%%%%%%%%%%                        INTRODUCTION                             %%%%%%%%%%%%%%%%%%%%%%%%%%%%%%%%%%%%%%%%%%%%%%%%%%%% 
%%%%%%%%%%%%%%%%%%%%%%%%%%%%%%%%%%%%%%%%%%%%%%%%%%%%%%%%%%%%%%%%%%%%%%%%%%%%%%%%%%%%%%%%%%%%%%%%%%%%%%%%%%%%%%%%%%%%%%%%%%%%%%%%%%%%%%%%%%%%%%%%%%%%%%%%%%%%%%%%%%%%%%%%%
%%%%%%%%%%%%%%%%%%%%%%%%%%%%%%%%%%%%%%%%%%%%%%%%%%%%%%%%%%%%%%%%%%%%%%%%%%%%%%%%%%%%%%%%%%%%%%%%%%%%%%%%%%%%%%%%%%%%%%%%%%%%%%%%%%%%%%%%%%%%%%%%%%%%%%%%%%%%%%%%%%%%%%%%%

\section{Introduction}

Let $\Omega$ be a bounded domain (i.e., a bounded connected open set) in $\mathbb{R}^N$ of class $C^1$, $N\ge 2$. We consider the following Steklov problem for the biharmonic operator

\begin{equation}\label{Steklov-Bi}
\left\{\begin{array}{ll}
\Delta^2 u -\tau\Delta u=0 ,\ \ & {\rm in}\  \Omega,\\
\frac{\partial^2 u}{\partial \nu^2 }=0 ,\ \ & {\rm on}\  \partial \Omega,\\
\tau\frac{\partial u}{\partial \nu }-{\rm div}_{\partial\Omega}\big(D^2u.\nu\big)-\frac{\partial\Delta u}{\partial\nu}=\lambda u ,\ \ & {\rm on}\  \partial \Omega,
\end{array}\right.
\end{equation}
in the unknowns $u$ (the eigenfunction), $\lambda$ (the eigenvalue), where $\tau>0$ is a fixed positive constant, $\nu$ denotes the outer unit normal to $\partial\Omega$, ${\rm div}_{\partial\Omega}$ denotes the tangential divergence operator and $D^2u$ the Hessian matrix of $u$. For $N=2$, this problem is related to the study of the vibrations of a thin elastic plate with a free frame and mass concentrated at the boundary. The spectrum consists of a diverging sequence of eigenvalues of finite multiplicity
$$
0=\lambda_1(\Omega)<\lambda_2(\Omega)\le \dots \le\lambda_j(\Omega)\le\cdots,
$$
where we agree to repeat the eigenvalues according to their multiplicity. We note that problem (\ref{Steklov-Bi}) is the analogue for the biharmonic operator of the classical Steklov problem for the Laplace operator, namely
\begin{equation}\label{Steklov-2}
\left\{\begin{array}{ll}
\Delta u =0 ,\ \ & {\rm in}\  \Omega,\\
\frac{\partial u}{\partial \nu }=\lambda u ,\ \ & {\rm on}\  \partial \Omega,
\end{array}\right.
\end{equation}
which models the vibrations of a free membrane with mass concentrated at the boundary. Problem (\ref{Steklov-2}) was first considered by Steklov in \cite{stek}, where the author provided a physical derivation (see also \cite{proz}). We refer to \cite{lambertisteklov} for related problems, and to \cite{gir} for a recent survey on the subject.

In this paper we are interested in the dependence of the eigenvalues $\lambda_j(\Omega)$ of problem (\ref{Steklov-Bi}) on the domain $\Omega$. Domain perturbation problems have been widely studied in the case of the Laplace operator subject to different homogeneous boundary conditions (Dirichlet, Neumann, Steklov, etc.), in particular for shape optimization problems. We recall for instance the celebrated Faber-Krahn inequality, which says that the ball minimizes the first eigenvalue of the Dirichlet Laplacian among all  domains with fixed measure (see \cite{faber,krahn}). Similar results have been shown also for other boundary conditions (see e.g., \cite{brock,weinberger,wein2}). As for the biharmonic operator, much less is known. Lord Rayleigh conjectured that the ball minimizes the fundamental tone of the clamped plate (i.e., the first positive eigenvalue of the biharmonic operator with Dirichlet boundary conditions) among open sets with the same measure. This has been proved by Nadirashvili \cite{nadir} for $N=2$, and soon generalized by Ashbaugh and Benguria \cite{ash} for $N=3$, while the general case remains an open problem (see also \cite{mohr,szego}). Regarding Neumann boundary conditions, Chasman \cite{chas1} proved that the ball is a maximizer for the fundamental tone. We refer to \cite{henry} for a general approach to domain perturbation problems (see also \cite{hale}), and to \cite{he} for a comprehensive discussion on eigenvalue shape optimization problems for elliptic operators. We also refer to \cite{bula2013,buosohinged} where the authors prove analyticity properties in the spirit of \cite{lala2004} for Dirichlet and intermediate boundary conditions respectively, and show that balls are critical domains for all the elementary symmetric functions of the eigenvalues.

Problem (\ref{Steklov-Bi}) should not be confused with other Steklov-type problems already discussed in the literature. For example, in \cite{bucurgazzola} the authors consider the following problem
\begin{equation*}
\left\{\begin{array}{ll}
\Delta^2u=0, & {\rm in }\ \Omega,\\
u=0, & {\rm on}\ \partial\Omega,\\
\Delta u=\lambda\frac{\partial u}{\partial\nu}, & {\rm on}\ \partial\Omega,
\end{array}\right.
\end{equation*}
which has a rather different nature. We note that, broadly speaking, one may refer to Steklov-type boundary conditions for those problems where a spectral parameter enters the boundary conditions.

The aim of the present paper is to discuss the Steklov problem (\ref{Steklov-Bi}) as the natural fourth order version of problem (\ref{Steklov-2}).
We derive problem (\ref{Steklov-Bi}) starting from a physical model and study the relationship with the Neumann eigenvalue problem for the biharmonic operator considered in \cite{chas1}.
Then we adapt the arguments used in \cite{lala2004,lalacri} in order to show real analyticity of the elementary symmetric functions of the eigenvalues of (\ref{Steklov-Bi}) and compute Hadamard-type formulas, which are used to prove that balls are critical domains. For completeness, we do the same also for the Neumann problem as stated in \cite{chas1}.
Finally, we study problem (\ref{Steklov-Bi}) when $\Omega$ is a ball and identify the fundamental tone and the corresponding modes (the eigenfunctions). By following a scheme similar to that used in \cite{weinberger} we prove that the ball is a maximizer for the first positive eigenvalue of problem (\ref{Steklov-Bi}) among all bounded domains of class $C^1$.

The paper is organized as follows. In Section \ref{sec:2} we  derive problem (\ref{Steklov-Bi}) providing a physical interpretation. In Section \ref{sec:3} we characterize the spectrum and show that problem (\ref{Steklov-Bi}) is strictly related to the Neumann eigenvalue problem as described in \cite{chas1}. As a bypass product, we provide a further phyisical justification of (\ref{Steklov-Bi}). In Section \ref{sec:4} we compute Hadamard-type formulas and prove that balls are critical domains under measure constraint for the elementary symmetric functions of the eigenvalues of (\ref{Steklov-Bi}) and of the corresponding Neumann problem (\ref{neumannweak}). In Section \ref{sec:5} we prove the isoperimetric inequality for the fundamental tone. Finally, in Section \ref{sec:6}, we provide some remarks on problems (\ref{Steklov-Bi}) and (\ref{neumannweak}) when $\tau=0$.

%%%%%%%%%%%%%%%%%%%%%%%%%%%%%%%%%%%%%%%%%%%%%%%%%%%%%%%%%%%%%%%%%%%%%%%%%%%%%%%%%%%%%%%%%%%%%%%%%%%%%%%%%%%%%%%%%%%%%%%%%%%%%%%%%%%%%%%%%%%%%%%%%%%%%%%%%%%%%%%%%%%%%%%%%
%%%%%%%%%%%%%%%%%%%%%%%%%%%%%%%%%%%%%%%%%%%%%%%%%%%%%%%%%%%%%%%%%%%%%%%%%%%%%%%%%%%%%%%%%%%%%%%%%%%%%%%%%%%%%%%%%%%%%%%%%%%%%%%%%%%%%%%%%%%%%%%%%%%%%%%%%%%%%%%%%%%%%%%%%
%%%%%%%%%%%%%%%%%%%%%%%%%%%%%%%%%%%%%%%%%%%%%%%%%%%%                     PHYSICAL DERIVATION                         %%%%%%%%%%%%%%%%%%%%%%%%%%%%%%%%%%%%%%%%%%%%%%%%%%%% 
%%%%%%%%%%%%%%%%%%%%%%%%%%%%%%%%%%%%%%%%%%%%%%%%%%%%%%%%%%%%%%%%%%%%%%%%%%%%%%%%%%%%%%%%%%%%%%%%%%%%%%%%%%%%%%%%%%%%%%%%%%%%%%%%%%%%%%%%%%%%%%%%%%%%%%%%%%%%%%%%%%%%%%%%%
%%%%%%%%%%%%%%%%%%%%%%%%%%%%%%%%%%%%%%%%%%%%%%%%%%%%%%%%%%%%%%%%%%%%%%%%%%%%%%%%%%%%%%%%%%%%%%%%%%%%%%%%%%%%%%%%%%%%%%%%%%%%%%%%%%%%%%%%%%%%%%%%%%%%%%%%%%%%%%%%%%%%%%%%%

\section{Formulating the problem}
\label{sec:2}
In this section we provide a physical interpretation of problem (\ref{Steklov-Bi}) for $N=2$, which arises in the theory of linear elasticity, in particular in the study of transverse vibrations of a thin plate. Actually, in Sections \ref{sec:2} and \ref{sec:3} we will consider a slightly more general version of problem (\ref{Steklov-Bi}), namely
\begin{equation}\label{Steklov-Bi-dens}
\left\{\begin{array}{ll}
\Delta^2 u -\tau\Delta u=0 ,\ \ & {\rm in}\  \Omega,\\
\frac{\partial^2 u}{\partial \nu^2 }=0 ,\ \ & {\rm on}\  \partial \Omega,\\
\tau\frac{\partial u}{\partial \nu }-{\rm div}_{\partial\Omega}\big(D^2u.\nu\big)-\frac{\partial\Delta u}{\partial\nu}=\lambda\rho u ,\ \ & {\rm on}\  \partial \Omega,
\end{array}\right.
\end{equation}
where a positive weight $\rho\in L^{\infty}(\partial\Omega)$ appears in the boundary conditions. The weight $\rho$ has the meaning of a mass density. We shall always assume that $\tau$ is a fixed positive real number. We recall that the tangential divergence ${\rm div}_{\partial\Omega}F$ of a vector field $F$ is defined as ${\rm div}_{\partial\Omega}F={\rm div}F_{|_{\partial\Omega}}-\left(DF.\nu\right)\cdot\nu$, where $DF$ is the Jacobiam matrix of $F$.

As usual, we assume that the  mass is displaced in the middle plane of the plate parallel to its faces. When the body is at its equilibrium it covers a planar domain $\Omega$ with boundary $\partial\Omega$ in $\mathbb R ^2$. We describe the vertical deviation from the equilibrium during the vibration of each point $(x,y)$ of $\Omega$ at time $t$ by means of a function $v=v(x,y,t)$. We suppose that the whole mass of the plate is concentrated at the boundary with a density which we denote by $\rho(x,y)$. Moreover, we assume that $\rho(x,y)$ is bounded and positive on $\partial\Omega$. Under these assumptions, the total kinetic energy of the plate is given by

$$
T=\frac{1}{2}\int_{\partial\Omega}\rho\dot v^2d\sigma,
$$
where we denote by $\dot v$ the derivative of $v$ with respect to the time $t$, and by $d\sigma$ the surface measure on $\partial\Omega$. Now we obtain an expression for the potential energy of the plate. By following \cite[\S 10.8]{wein}, under the assumption that the strain potential energy at each point depends only on the strain configuration at that point and that the Poisson ratio of the material is zero, we have that the strain potential energy is given by

$$
V_s=\frac{1}{2}\int_{\Omega}\big(v_{xx}^2+v_{yy}^2+2v_{xy}^2\big)dxdy.
$$
Besides $V_s$, we have another term of the potential energy due to the lateral tension

$$
V_t=\frac{\tau}{2}\int_{\Omega}\big(v_x^2+v_y^2\big)dxdy,
$$
where $\tau>0$ is the ratio of lateral tension due to flexural rigidity. The Hamilton's integral of the system is given by

\begin{multline}\label{corr1}
\mathcal H=\int_{t_1}^{t_2}T-V_s-V_t\, dt\\=\frac{1}{2}\int_{t_1}^{t_2}\int_{\partial\Omega}\rho\dot v^2 d\sigma dt-\frac{1}{2}\int_{t_1}^{t_2}\int_{\Omega}\big(v_{xx}^2+v_{yy}^2+2v_{xy}^2\big)+\tau\big(v_x^2+v_y^2\big)dxdydt.
\end{multline}
According to Hamilton's Variational Principle, the actual motion of the system minimizes such integral. Let $v(x,y,t)$ be a minimizer for $\mathcal H$. By differentiating (\ref{corr1}) it follows that $v$ satisfies 
\begin{multline*}
-\int_{t_1}^{t_2}\int_{\partial\Omega}\eta\rho\ddot v d\sigma dt-\int_{t_1}^{t_2}\int_{\Omega}\eta\left(\Delta^2v-\tau\Delta v\right)dxdydt\\
-\int_{t_1}^{t_2}\int_{\partial\Omega}\frac{\partial\eta}{\partial\nu}\frac{\partial^2v}{\partial\nu^2}-\eta\left(\tau\frac{\partial v}{\partial\nu}-{\rm div}_{\partial\Omega}\left(D^2v.\nu\right)_{\partial\Omega}-\frac{\partial\Delta v}{\partial\nu}\right)d\sigma dt=0\nonumber,
\end{multline*}
for all $\eta\in C^2(\Omega\times[t_1,t_2])$ such that $\eta(x,y,t_1)=\eta(x,y,t_2)=0$ and  $(x,y)\in\Omega$. Since $\eta$ is arbitrary we obtain
\begin{equation}\label{forcond}
   \begin{cases}
	\Delta^2v-\tau\Delta v=0, & {\rm in}\ \Omega,\\
   \frac{\partial^2v}{\partial\nu^2}=0, & {\rm on}\ \partial\Omega,\\
	\rho\ddot v+\tau\frac{\partial v}{\partial\nu}-{\rm div}_{\partial\Omega}\left(D^2v.\nu\right)-\frac{\partial\Delta v}{\partial\nu}=0, & {\rm on}\ \partial\Omega
   \end{cases}
\end{equation}
for all $t\in[t_1,t_2]$. We remark that we wrote ${\rm div}_{\partial\Omega}\left(D^2v.\nu\right)$ instead of ${\rm div}_{\partial\Omega}\left(D^2v.\nu\right)_{\partial\Omega}$ since $\left(D^2v.\nu\right)_{\partial\Omega}=D^2v.\nu-\frac{\partial^2 v}{\partial\nu^2}\nu$ and $\frac{\partial^2 v}{\partial\nu^2}=0$ on $\partial\Omega$.

We separate the variables and, as is customary, we look for solutions to problem (\ref{forcond}) of the form $v(x,y,t)=u(x,y)w(t)$. We find that the temporal component $w(t)$ solves the ordinary differential equation $-\ddot w(t)=\lambda w(t)$ for all $t\in[t_1,t_2]$, while the spatial component $u$ solves problem (\ref{Steklov-Bi-dens}).

%%%%%%%%%%%%%%%%%%%%%%%%%%%%%%%%%%%%%%%%%%%%%%%%%%%%%%%%%%%%%%%%%%%%%%%%%%%%%%%%%%%%%%%%%%%%%%%%%%%%%%%%%%%%%%%%%%%%%%%%%%%%%%%%%%%%%%%%%%%%%%%%%%%%%%%%%%%%%%%%%%%%%%%%%
%%%%%%%%%%%%%%%%%%%%%%%%%%%%%%%%%%%%%%%%%%%%%%%%%%%%%%%%%%%%%%%%%%%%%%%%%%%%%%%%%%%%%%%%%%%%%%%%%%%%%%%%%%%%%%%%%%%%%%%%%%%%%%%%%%%%%%%%%%%%%%%%%%%%%%%%%%%%%%%%%%%%%%%%%
%%%%%%%%%%%%%%%%%%%%%%%%%%%%%%%%%%%%%%%%%%%%%%%%%%%%         SPECTRUM AND CONVERGENCE NEUMANN TO STEKLOV             %%%%%%%%%%%%%%%%%%%%%%%%%%%%%%%%%%%%%%%%%%%%%%%%%%%% 
%%%%%%%%%%%%%%%%%%%%%%%%%%%%%%%%%%%%%%%%%%%%%%%%%%%%%%%%%%%%%%%%%%%%%%%%%%%%%%%%%%%%%%%%%%%%%%%%%%%%%%%%%%%%%%%%%%%%%%%%%%%%%%%%%%%%%%%%%%%%%%%%%%%%%%%%%%%%%%%%%%%%%%%%%
%%%%%%%%%%%%%%%%%%%%%%%%%%%%%%%%%%%%%%%%%%%%%%%%%%%%%%%%%%%%%%%%%%%%%%%%%%%%%%%%%%%%%%%%%%%%%%%%%%%%%%%%%%%%%%%%%%%%%%%%%%%%%%%%%%%%%%%%%%%%%%%%%%%%%%%%%%%%%%%%%%%%%%%%%

\section{Characterization of the spectrum. Alternative derivation of the problem}
\label{sec:3}

In this section we prove that the spectrum of the eigenvalue problem (\ref{Steklov-Bi-dens}) is discrete. In particular, each eigenvalue is non-negative and has finite multiplicity and there exists a Hilbert basis of the standard Sobolev space $H^2(\Omega)$ of eigenvectors. Then we provide a further derivation of problem (\ref{Steklov-Bi-dens}). Namely, we show that this problem can be seen as a limit of eigenvalue problems for the biharmonic operator with Neumann boundary conditions and mass density $\rho_{\varepsilon}$ which concentrates in a neighborhood of the boundary as $\varepsilon$ goes to zero. We refer to \cite{arrieta,proz} for similar discussions concerning second order problems. We observe that the asymptotic analysis of mass concentration problems for second order operators has been performed by several authors by exploiting asymptotic expansions methods, see e.g., \cite{nazarov1,nazarov2} and the references therein. We also mention the alternative approach based on potential theory and functional analysis proposed in \cite{dalla,lanza}.

Note that here and in the sequel we shall not put any restriction on the space dimension. Thus $\Omega$ will always denote a bounded domain in $\mathbb R^N$ of class $C^1$, with $N\geq 2$.

\subsection{Analysis of the spectrum of problem (\ref{Steklov-Bi-dens})}
Let $\rho\in{\mathcal R^{\mathcal S}}$, where ${\mathcal R^{\mathcal S}}:=\left\{\rho\in L^\infty(\partial\Omega):{\rm ess\,inf}_{x\in\partial\Omega}\rho(x)>0\right\}$. We consider the weak formulation of problem (\ref{Steklov-Bi-dens}),

\begin{equation}\label{steklovweak}
\int_\Omega D^2u:D^2\varphi+\tau\nabla u\cdot\nabla\varphi dx=\lambda\int_{\partial\Omega}\rho u \varphi d\sigma\,,\ \forall\varphi\in H^2(\Omega)\,,
\end{equation}
in the unknowns $u\in H^2(\Omega)$, $\lambda\in\mathbb{R}$, where
$$
D^2u:D^2\varphi=\sum_{i,j=1}^N\frac{\partial^2u}{\partial x_i\partial x_j}\frac{\partial^2 \varphi}{\partial x_i\partial x_j}
$$
denotes the Frobenius product. Actually, we will obtain a problem in $H^2(\Omega)/\mathbb{R}$ since we need to get rid of the constants, which generate  the eigenspace corresponding to the eigenvalue $\lambda=0$. We denote by $\mathcal J^{\mathcal S}_\rho$ the continuous embedding of $L^2(\partial\Omega)$ into $H^2(\Omega)'$ defined by
\begin{equation*}
 \mathcal J^{\mathcal S}_\rho[u][\varphi]:=\int_{\partial\Omega}\rho u\varphi d\sigma,\ \forall u\in L^2(\partial\Omega),\varphi\in H^2(\Omega).
\end{equation*}
We set
\begin{equation*}
 H^{2,\mathcal S}_\rho(\Omega):=\left\{u\in H^2(\Omega):\int_{\partial\Omega}\rho u d\sigma=0\right\},
\end{equation*}
and we consider in $H^2(\Omega)$ the bilinear form 
\begin{equation}\label{bilinear-form}
<u,v>=\int_\Omega D^2u:D^2v+\tau\nabla u\cdot\nabla v dx.
\end{equation}
By the Poincar\'e-Wirtinger Inequality, it turns out that this bilinear form is indeed a scalar product on ${{H}}^{2,\mathcal S}_\rho(\Omega)$ whose induced norm is equivalent to the standard one. In the sequel we will think of the space $H^{2,\mathcal S}_{\rho}(\Omega)$ as endowed with the form (\ref{bilinear-form}). Let $F(\Omega)$ be defined by $F(\Omega):=\left\{G\in H^2(\Omega)':G[1]=0\right\}$. Then, we consider the operator $\mathcal P^{\mathcal S}_\rho$ as an operator from ${{H}}^{2,\mathcal S}_\rho(\Omega)$ to $F(\Omega)$, defined by
\begin{equation}\label{steklovoperator}
\mathcal P^{\mathcal S}_\rho[u][\varphi]:=\int_\Omega  D^2u:D^2\varphi+\tau \nabla u\cdot\nabla\varphi dx,\ \forall u\in {{H}}^{2,\mathcal S}_\rho(\Omega),\varphi\in {H}^2(\Omega).
\end{equation}
It turns out that $\mathcal{\mathcal P}^{\mathcal S}_\rho$ is a homeomorphism of ${{H}}^{2,\mathcal S}_\rho(\Omega)$ onto $F(\Omega)$. We define the operator $\pi^{\mathcal S}_{\rho}$ from $H^2(\Omega)$ to $ H^{2,\mathcal S}_\rho(\Omega)$ by
\begin{equation}\label{piS}
\pi^{\mathcal S}_{\rho}[u]:=u-\frac{\int_{\partial\Omega}\rho u d\sigma}{\int_{\partial\Omega}\rho d\sigma}.
\end{equation}
We consider the space ${H}^2(\Omega)/\mathbb{R}$ endowed with the bilinear form induced by (\ref{bilinear-form}). Such bilinear form renders ${H}^2(\Omega)/\mathbb{R}$ a Hilbert space. We denote by $\pi^{\sharp,\mathcal S}_{\rho}$ the map from ${H}^{2}(\Omega)/\mathbb{R}$ onto ${{H}}^{2,\mathcal S}_\rho(\Omega)$ defined by the equality $\pi^{\mathcal S}_{\rho}=\pi^{\sharp,\mathcal S}_{\rho}\circ p$, where $p$ is the canonical projection of $H^2(\Omega)$ onto $H^2(\Omega)/\mathbb R$. The map $\pi^{\sharp,\mathcal S}_{\rho}$ turns out to be a homeomorphism. Finally, we define the operator $ T^{\mathcal S}_\rho$ acting on ${H}^2(\Omega)/\mathbb{R}$ as follows
\begin{equation}\label{stekres}
 T^{\mathcal S}_\rho:=(\pi^{\sharp,\mathcal S}_{\rho})^{-1}\circ({\mathcal P}^{\mathcal S}_\rho)^{-1}\circ {\mathcal J}^{\mathcal S}_\rho\circ {\rm Tr}\circ\pi^{\sharp,\mathcal S}_{\rho},
\end{equation}
where ${\rm Tr}$ denotes the trace operator acting from $H^2(\Omega)$ to $L^2(\partial\Omega)$.

\begin{rem}
We observe that the pair $(\lambda,u)$ of the set $(\mathbb{R}\setminus\lbrace 0\rbrace)\times ({{H}}^{2,\mathcal S}_\rho(\Omega)\setminus\lbrace 0\rbrace)$ satisfies (\ref{steklovweak}) if and only if $\lambda>0$ and the pair $(\lambda^{-1},p[u])$ of the set $\mathbb{R}\times (({H}^2(\Omega)/\mathbb{R})\setminus\lbrace 0\rbrace)$ satisfies the equation
\begin{equation*}
\lambda^{-1} p[u]= T^{\mathcal S}_\rho p[u].
\end{equation*}
\end{rem}

We have the following
\begin{thm}
\label{comsa}
The operator $ T^{\mathcal S}_\rho$ is a non-negative compact selfadjoint operator in ${H}^2(\Omega)/\mathbb{R}$, whose eigenvalues coincide with the reciprocals of the positive eigenvalues of problem (\ref{steklovweak}). In particular, the set of eigenvalues of problem (\ref{steklovweak}) is contained in $[0,+\infty[$ and consists of the image of a sequence increasing to $+\infty$. Each eigenvalue has finite multiplicity.
\proof For the selfadjointness, it suffices to observe that
\begin{eqnarray*}
<  T^{\mathcal S}_\rho u, v>_{{H}^2(\Omega)/\mathbb{R}}&=&<(\pi^{\sharp,\mathcal S}_{\rho})^{-1}\circ({\mathcal P}^{\mathcal S}_\rho)^{-1}\circ {\mathcal J}^{\mathcal S}_\rho\circ {\rm Tr}\circ\pi^{\sharp,\mathcal S}_{\rho} u, v>_{{H}^2(\Omega)/\mathbb{R}}\\&=&{\mathcal P}^{\mathcal S}_\rho[({\mathcal P}^{\mathcal S}_\rho)^{-1}\circ {\mathcal J}^{\mathcal S}_\rho\circ {\rm Tr} \circ \pi^{\sharp,\mathcal S}_{\rho} u][\pi^{\sharp,\mathcal S}_{\rho} v]\\
&=&{\mathcal J}^{\mathcal S}_\rho[{\rm Tr}\circ\pi^{\sharp,\mathcal S}_{\rho} u][\pi^{\sharp,\mathcal S}_{\rho} v],\ \ \forall u,v\in {H}^2(\Omega)/\mathbb{R}.
\end{eqnarray*}
For the compactness, just observe that the trace operator ${\rm Tr}$ acting from $H^1(\Omega)$ to $L^2(\partial\Omega)$ is compact. The remaining statements are straightforward.
\endproof

\end{thm}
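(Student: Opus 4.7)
The plan is to follow the standard compact selfadjoint operator scheme: verify the three structural properties (selfadjointness, non-negativity, compactness) for $T^{\mathcal S}_\rho$, then invoke the spectral theorem and translate the conclusion back to problem (\ref{steklovweak}) via the Remark preceding the statement.

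For selfadjointness, I would simply unfold the definition of $T^{\mathcal S}_\rho$ against the inner product on $H^2(\Omega)/\mathbb{R}$. Since that inner product is the one induced by (\ref{bilinear-form}) through $\pi^{\sharp,\mathcal S}_\rho$, and since $\mathcal P^{\mathcal S}_\rho$ is precisely the operator that represents (\ref{bilinear-form}) as a duality pairing, two of the three homeomorphisms in the composition cancel out and one is left with
\begin{equation*}
\langle T^{\mathcal S}_\rho u, v\rangle_{H^2(\Omega)/\mathbb{R}} = \mathcal J^{\mathcal S}_\rho[{\rm Tr}\,\pi^{\sharp,\mathcal S}_\rho u][\pi^{\sharp,\mathcal S}_\rho v] = \int_{\partial\Omega}\rho\,(\pi^{\sharp,\mathcal S}_\rho u)(\pi^{\sharp,\mathcal S}_\rho v)\, d\sigma,
\end{equation*}
which is manifestly symmetric in $u,v$. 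Taking $v=u$ in the same identity yields $\langle T^{\mathcal S}_\rho u, u\rangle\ge 0$ since $\rho>0$, giving non-negativity at no extra cost.

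For compactness, I would write $T^{\mathcal S}_\rho$ as a composition in which every factor is bounded \emph{except} the trace operator ${\rm Tr}\colon H^2(\Omega)\to L^2(\partial\Omega)$, which factors through the compact trace ${\rm Tr}\colon H^1(\Omega)\to L^2(\partial\Omega)$ supplied by Rellich--Kondrachov on the $C^1$ domain $\Omega$. A composition that contains a compact factor and bounded factors is compact, so $T^{\mathcal S}_\rho$ inherits compactness.

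Having established that $T^{\mathcal S}_\rho$ is a compact, non-negative, selfadjoint operator on the Hilbert space $H^2(\Omega)/\mathbb{R}$, the Hilbert--Schmidt spectral theorem delivers a sequence of non-negative eigenvalues of finite multiplicity accumulating only at $0$. By the Remark immediately preceding the theorem, the \emph{positive} eigenvalues of $T^{\mathcal S}_\rho$ are exactly the reciprocals of the positive eigenvalues of (\ref{steklovweak}); hence those reciprocals form a sequence converging to $0$, and the eigenvalues of (\ref{steklovweak}) themselves form a sequence increasing to $+\infty$, each of finite multiplicity, with $0$ as the only remaining eigenvalue (corresponding to constants, which were quotiented out). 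The main (and essentially only) technical point is the bookkeeping in the selfadjointness computation, i.e.\ checking that $\pi^{\sharp,\mathcal S}_\rho$ really does intertwine the quotient inner product with the bilinear form $(\ref{bilinear-form})$ on $H^{2,\mathcal S}_\rho(\Omega)$ so that the cancellation above is legitimate; everything else is either definitional or a direct appeal to the spectral theorem.
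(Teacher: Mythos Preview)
Your proposal is correct and follows essentially the same approach as the paper: the selfadjointness is obtained by the identical unwinding of $T^{\mathcal S}_\rho$ against the quotient inner product to arrive at $\mathcal J^{\mathcal S}_\rho[{\rm Tr}\,\pi^{\sharp,\mathcal S}_\rho u][\pi^{\sharp,\mathcal S}_\rho v]$, and compactness is deduced from compactness of the trace $H^1(\Omega)\to L^2(\partial\Omega)$. Your write-up is in fact more explicit than the paper's, spelling out the non-negativity (via $v=u$) and the appeal to the spectral theorem that the paper leaves as ``straightforward.''
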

As a consequence we have that the spectrum of (\ref{steklovweak}) is of the form
$$
0\leq\lambda_1\leq\lambda_2\leq\cdots\leq\lambda_j\leq\cdots
$$
Note that the first positive eigenvalue is $\lambda_{2}$ as proved by the following
\begin{thm}\label{comsa2}
The first eigenvalue $\lambda_1$ of (\ref{steklovweak}) is zero and the corresponding eigenfunctions are the constants. Moreover, $\lambda_{2}>0$.
\proof
It is straightforward to see that constant functions are eigenfunctions of (\ref{steklovweak}) with eigenvalue $\lambda=0$. Suppose now that $u$ is an eigenfunction corresponding to the eigenvalue $\lambda=0$. Then we have
$$
\int_{\Omega}|D^2u|^2+\tau |\nabla u|^2 dx=0,
$$
where $|D^2u|^2=\sum_{i,j=1}^N\big(\frac{\partial^2 u}{\partial x_i\partial x_j}\big)^2$. Since $\nabla u=0$, it follows that $u$ is constant. Then the eigenvalue $\lambda=0$ has multiplicity one. 
\endproof
\end{thm}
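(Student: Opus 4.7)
The plan is to verify the two claims separately and use the energy identity coming from testing the weak formulation against the eigenfunction itself.

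First, for the forward direction, I would observe that if $u\equiv c$ is constant, then $D^2u=0$ and $\nabla u=0$, so the left-hand side of (\ref{steklovweak}) vanishes identically for every test function $\varphi\in H^2(\Omega)$. Thus (\ref{steklovweak}) reduces to $0=\lambda c\int_{\partial\Omega}\rho\,\varphi\,d\sigma$ for all $\varphi$; choosing any $\varphi$ with $\int_{\partial\Omega}\rho\varphi\,d\sigma\neq0$ (which exists because $\rho\in\mathcal R^{\mathcal S}$ has positive essential infimum, so e.g.\ $\varphi\equiv1$ works) forces $\lambda=0$ when $c\neq0$. Hence every nonzero constant is an eigenfunction with eigenvalue $0$.

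Next, for the converse, suppose $u\in H^2(\Omega)$ is a (nonzero) eigenfunction associated with the eigenvalue $\lambda=0$. Testing (\ref{steklovweak}) with $\varphi=u$ gives
\begin{equation*}
\int_\Omega |D^2u|^2+\tau|\nabla u|^2\,dx=0.
\end{equation*}
Since $\tau>0$ and both integrands are non-negative, we conclude $\nabla u=0$ a.e.\ in $\Omega$; because $\Omega$ is a bounded connected open set, $u$ is (a.e.\ equal to) a constant. Therefore the eigenspace associated with $\lambda=0$ coincides with $\mathbb R$ and has dimension exactly one.

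Finally, combining this with Theorem \ref{comsa}, which arranges the eigenvalues in non-decreasing order $0\le\lambda_1\le\lambda_2\le\cdots$ with finite multiplicities, we get $\lambda_1=0$ and, since the multiplicity of $0$ is one, $\lambda_2>0$. The argument is essentially a one-line energy estimate; the only point that needs attention is the use of $\tau>0$, which promotes the vanishing of the Hessian to the vanishing of the gradient and thus to the constancy of $u$ on the connected domain $\Omega$.
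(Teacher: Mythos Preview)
Your proof is correct and follows exactly the same approach as the paper: test the weak formulation with $\varphi=u$ to obtain $\int_\Omega |D^2u|^2+\tau|\nabla u|^2\,dx=0$, use $\tau>0$ to conclude $\nabla u=0$, and invoke connectedness of $\Omega$ to deduce that $u$ is constant, giving multiplicity one for $\lambda=0$. Your write-up is slightly more explicit than the paper's (spelling out the forward direction and the role of connectedness), but the argument is identical.
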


Thus $\lambda_{2}$ is the first positive eigenvalue of (\ref{steklovweak}) which is usually called the fundamental tone. Note that we can charactrize $\lambda_{2}$ by means of the Rayleigh principle
\begin{equation}\label{minmax}
\lambda_{2}=\min_{\substack{0\ne u\in H^2(\Omega)\\ \int_{\partial\Omega}\rho u d\sigma=0}}\frac{\int_{\Omega}|D^2u|^2+\tau |\nabla u|^2 dx}{\int_{\partial\Omega}\rho u^2 d\sigma}.
\end{equation}

%%%%%%%%%%%%%%%%%%%%%%%%%%%%%%%%%%%%%%%%%%%%%%%%%%%%%%%%%%%%%%%%%%%%%%%%%%%%%%%%%%%%%%%%%%%%%%%%%%%%%%%%%%%%%%%%%%%%%%%%%%%%%%%%%%%%%%%%%%%%%%%%%%%%%%%%%%%%%%%%%%%%%%%%%%%%%%%%
%%%%%%%%%%%%%%%%%%%%%%%%%%%%%%%%%%%%%%%%%%%%%%%%%%%%%%%%%%%%%%%%%%%%%%%%%%%%%%%%%%%%%%%%%%%%%%%%%%%%%%%%%%%%%%%%%%%%%%%%%%%%%%%%%%%%%%%%%%%%%%%%%%%%%%%%%%%%%%%%%%%%%%%%%%%%%%%%

%%%%%%%%%%%%%%%%%%%%%%%%%%%%%%%%%%%%%%   MASS CONCENTRATION - NEUMANN VS STEKLOV                   %%%%%%%%%%%%%%%%%%%%%%%%%%%%%%%%%%%%%%%%%%%%%%%%%%%%%%%%%%%%%%%%%%%%%%%%%%%%%

%%%%%%%%%%%%%%%%%%%%%%%%%%%%%%%%%%%%%%%%%%%%%%%%%%%%%%%%%%%%%%%%%%%%%%%%%%%%%%%%%%%%%%%%%%%%%%%%%%%%%%%%%%%%%%%%%%%%%%%%%%%%%%%%%%%%%%%%%%%%%%%%%%%%%%%%%%%%%%%%%%%%%%%%%%%%%%%%
%%%%%%%%%%%%%%%%%%%%%%%%%%%%%%%%%%%%%%%%%%%%%%%%%%%%%%%%%%%%%%%%%%%%%%%%%%%%%%%%%%%%%%%%%%%%%%%%%%%%%%%%%%%%%%%%%%%%%%%%%%%%%%%%%%%%%%%%%%%%%%%%%%%%%%%%%%%%%%%%%%%%%%%%%%%%%%%%

\subsection{Asymptotic behavior of Neumann eigenvalues}

We consider the following eigenvalue problem for the biharmonic operator subject to Neumann boundary conditions

\begin{equation}\label{neumannweak}
\left\{\begin{array}{ll}
\Delta^2 u-\tau\Delta u = \lambda\rho u,\ \ & {\rm in}\  \Omega,\\
\frac{\partial^2 u}{\partial \nu^2 }=0 ,\ \ & {\rm on}\  \partial \Omega,\\
\tau\frac{\partial u}{\partial \nu }-{\rm div_{\partial\Omega}}\big(D^2u.\nu\big)-\frac{\partial\Delta u}{\partial\nu}=0 ,\ \ & {\rm on}\  \partial \Omega,
\end{array}\right.
\end{equation}
where $\rho\in {\mathcal R^{\mathcal N}}:=\left\{\rho\in L^\infty(\Omega):{\rm ess\,inf}_{x\in\Omega}\rho(x)>0\right\}$ (we refer to \cite{chas1} for the derivation of the boundary conditions).
It is well known that this problem arises in the study of a free vibrating plate whose mass is displaced on the whole of $\Omega$ with density $\rho$. 

Let us denote by $\Omega_{\varepsilon}$ the set defined by
$$
\Omega_{\varepsilon}:=\left\{x\in\Omega:{\rm dist}(x,\partial\Omega)>\varepsilon\right\}.
$$
We fix a positive number $M>0$ and  choose the family of densities $\rho_{\varepsilon}$ defined as follows

\begin{equation}\label{rhoeps}
   \rho_{\varepsilon}(x)=\begin{cases}
	\varepsilon, & {\rm if}\ x\in\Omega_{\varepsilon},\\
   \frac{M-\varepsilon |\Omega_{\varepsilon}|}{|\Omega\setminus\overline{\Omega}_{\varepsilon}|}, & {\rm if}\ x\in\Omega\setminus\overline{\Omega}_{\varepsilon},\\
   \end{cases}
\end{equation}
for $\varepsilon\in ]0,\varepsilon_0[$, where $\varepsilon_0>0$ is sufficiently small. If in addition we assume that $\Omega$ is of class $C^2$, $\varepsilon_0$ can be chosen in such a way that the map $x\mapsto x-\nu\varepsilon$ is a diffeomorphism between $\partial\Omega$ and $\partial\Omega_{\varepsilon}$ for all $\varepsilon\in]0,\varepsilon_0[$. 
We note that $\int_{\Omega}\rho_{\varepsilon}dx=M$ for all $\varepsilon\in]0,\varepsilon_0[$. We refer to the quantity $M$ as the total mass of the body.

We prove, under the additional hypothesis that $\Omega$ is of class $C^2$, convergence of the eigenvalues and eigenfunctions of problem (\ref{neumannweak}) with density $\rho_{\eps}$ to the eigenvalues and eigenfunctions of problem (\ref{steklovweak}) with constant surface density $\frac{M}{|\partial\Omega|}$ when the parameter $\varepsilon$ go to zero (see Corollary \ref{corr4}). This provides a further interpretation of problem (\ref{steklovweak}) as the equation of a free vibrating plate whose mass is concentrated at the boundary in the case of domains of class $C^2$.

Problem (\ref{neumannweak}) has an increasing sequence of non-negative eigenvalues of finite multiplicity and the eigenfunctions form a Hilbert basis of $H^2(\Omega)$. We consider the weak formulation of problem (\ref{neumannweak}) with density $\rho_{\varepsilon}$,

\begin{equation}\label{neumannweak2}
\int_\Omega  D^2u:D^2\varphi+\tau \nabla u\cdot\nabla\varphi dx=\lambda\int_\Omega\rho_{\varepsilon} u \varphi dx\,,\  \forall\varphi\in H^2(\Omega)\,,
\end{equation}
in the unknowns $u\in H^2(\Omega)$, $\lambda\in\mathbb{R}$. In the sequel we shall recast this problem in $H^2(\Omega)/\mathbb{R}$ since we need to get rid of the constants, which generate  the eigenspace corresponding to the eigenvalue $\lambda=0$. We denote by $i$ the canonical embedding of $H^2(\Omega)$ into $L^2(\Omega)$. We denote by $\mathcal J^{\mathcal N}_{\rho_{\varepsilon}}$ the continuous embedding of $L^2(\Omega)$ into $H^2(\Omega)'$, defined by
\begin{equation*}
\mathcal J^{\mathcal N}_{\rho_{\varepsilon}}[u][\varphi]:=\int_\Omega\rho_{\varepsilon} u\varphi dx,\ \forall u\in L^2(\Omega),\varphi\in H^2(\Omega).
\end{equation*}
We set
\begin{equation*}
H^{2,\mathcal N}_{\rho_{\varepsilon}}(\Omega):=\left\{u\in H^2(\Omega):\int_\Omega u \rho_{\varepsilon} dx=0\right\}.
\end{equation*}
In the sequel we will think of the space $H^{2,\mathcal N}_{\rho_{\varepsilon}}(\Omega)$ as endowed with the form (\ref{bilinear-form}). Such form defines on ${H}^{2,\mathcal N}_{\rho_{\varepsilon}}(\Omega)$ a scalar product whose induced norm is equivalent to the standard one. We denote by $\pi^{\mathcal N}_{\rho_{\varepsilon}}$ the map from ${H}^{2}(\Omega)$ to ${H}^{2,\mathcal N}_{\rho_{\varepsilon}}(\Omega)$ defined by
$$
\pi^{\mathcal N}_{\rho_{\varepsilon}}[u]:=u-\frac{\int_\Omega u\rho_{\varepsilon} dx}{\int_\Omega\rho_{\varepsilon} dx},
$$
for all $u\in {H}^{2}(\Omega)$. We denote by $\pi^{\sharp,\mathcal N}_{\rho_{\varepsilon}}$ the map from ${H}^{2}(\Omega)/\mathbb{R}$ onto ${H}^{2,\mathcal N}_{\rho_{\varepsilon}}(\Omega)$ defined by the equality $\pi^{\mathcal N}_{\rho_{\varepsilon}}=\pi^{\sharp,\mathcal N}_{\rho_{\varepsilon}}\circ p$. As in (\ref{steklovoperator}), we consider the operator $\mathcal P^{\mathcal N}_{\rho_{\varepsilon}}$ as a map from ${H}^{2,\mathcal N}_{\rho_{\varepsilon}}(\Omega)$ to $F(\Omega)$ defined by
\begin{equation*}
\mathcal P^{\mathcal N}_{\rho_{\varepsilon}}[u][\varphi]:=\int_\Omega D^2u:D^2\varphi+\tau\nabla u\cdot\nabla\varphi dx, \ \forall u\in {H}^{2,\mathcal N}_{\rho_{\varepsilon}}(\Omega),\varphi \in {H}^{2}(\Omega).
\end{equation*}
It turns out that $\mathcal P^{\mathcal N}_{\rho_{\varepsilon}}$ is a linear homeomorphism of ${H}^{2,\mathcal N}_{\rho_{\varepsilon}}(\Omega)$ onto $F(\Omega)$. Finally, let the operator $T^{\mathcal N}_{\rho_{\varepsilon}}$ from ${H}^2(\Omega)/\mathbb{R}$ to itself be defined by
\begin{equation}\label{Trho}
T^{\mathcal N}_{\rho_{\varepsilon}}:=(\pi^{\sharp,\mathcal N}_{\rho_{\varepsilon}})^{-1}\circ (\mathcal P^{\mathcal N}_{\rho_{\varepsilon}})^{-1}\circ \mathcal J^{\mathcal N}_{\rho_{\varepsilon}}\circ i \circ \pi^{\sharp,\mathcal N}_{\rho_{\varepsilon}}.
\end{equation}

\begin{rem}
We observe that the pair $(\lambda,u)$ of the set $(\mathbb{R}\setminus\lbrace 0\rbrace)\times ({H}^{2,\mathcal N}_{\rho_{\varepsilon}}(\Omega)\setminus\lbrace 0\rbrace)$ satisfies (\ref{neumannweak2}) if and only if $\lambda>0$ and the pair $(\lambda^{-1},p[u])$ of the set $\mathbb{R}\times (({H}^2(\Omega)/\mathbb{R})\setminus\lbrace 0\rbrace)$ satisfies the equation
\begin{equation*}
\lambda^{-1} p[u]=T^{\mathcal N}_{\rho_{\varepsilon}} p[u].
\end{equation*}
\end{rem}

As in Theorem \ref{comsa} it is easy to prove the following

\begin{thm}%\label{comsaneu}
Let $\Omega$ be a bounded domain in $\mathbb{R}^N$ of class $C^1$ and $\varepsilon\in]0,\varepsilon_0[$. The operator $T^{\mathcal N}_{\rho_{\varepsilon}}$ is a compact selfadjoint operator in ${H}^2(\Omega)/\mathbb{R}$ and its eigenvalues coincide with the reciprocals of the positive eigenvalues $\lambda_j({\rho_{\varepsilon}})$ of problem (\ref{neumannweak2}) for all $j\in\mathbb N$. Moreover, the set of eigenvalues of problem (\ref{neumannweak2}) is contained in $[0,+\infty[$ and consists of the image of a sequence increasing to $+\infty$. Each eigenvalue has finite multiplicity.
\end{thm}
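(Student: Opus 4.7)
The plan is to mimic the proof of Theorem \ref{comsa} essentially verbatim, replacing every Steklov ingredient with its Neumann counterpart; the three standard facts to verify are selfadjointness, compactness, and the non-negativity/discreteness of the spectrum, after which the correspondence with the eigenvalues of (\ref{neumannweak2}) is read off from the remark that immediately precedes the theorem.

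For selfadjointness, I would compute, exactly as in Theorem \ref{comsa}, the inner product
\begin{equation*}
\langle T^{\mathcal N}_{\rho_\varepsilon} u, v\rangle_{H^2(\Omega)/\mathbb{R}}
= \mathcal P^{\mathcal N}_{\rho_\varepsilon}\!\bigl[(\mathcal P^{\mathcal N}_{\rho_\varepsilon})^{-1}\!\circ\!\mathcal J^{\mathcal N}_{\rho_\varepsilon}\!\circ i\circ\pi^{\sharp,\mathcal N}_{\rho_\varepsilon}u\bigr]\bigl[\pi^{\sharp,\mathcal N}_{\rho_\varepsilon}v\bigr]
=\mathcal J^{\mathcal N}_{\rho_\varepsilon}\bigl[i\circ\pi^{\sharp,\mathcal N}_{\rho_\varepsilon}u\bigr]\bigl[\pi^{\sharp,\mathcal N}_{\rho_\varepsilon}v\bigr],
\end{equation*}
which unpacks to $\int_\Omega \rho_\varepsilon (\pi^{\sharp,\mathcal N}_{\rho_\varepsilon}u)(\pi^{\sharp,\mathcal N}_{\rho_\varepsilon}v)\,dx$, a symmetric expression in $u$ and $v$. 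Choosing $v=u$ in the same computation yields $\int_\Omega \rho_\varepsilon (\pi^{\sharp,\mathcal N}_{\rho_\varepsilon}u)^2\,dx\ge 0$ since $\rho_\varepsilon\in \mathcal R^{\mathcal N}$, so $T^{\mathcal N}_{\rho_\varepsilon}$ is non-negative.

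For compactness, the decisive ingredient is the Rellich--Kondrachov theorem: the canonical embedding $i:H^2(\Omega)\hookrightarrow L^2(\Omega)$ is compact (here $\Omega$ is bounded and of class $C^1$). Since $\pi^{\sharp,\mathcal N}_{\rho_\varepsilon}$ is a homeomorphism, $(\mathcal P^{\mathcal N}_{\rho_\varepsilon})^{-1}$ is a homeomorphism onto $H^{2,\mathcal N}_{\rho_\varepsilon}(\Omega)$, and $\mathcal J^{\mathcal N}_{\rho_\varepsilon}:L^2(\Omega)\to H^2(\Omega)'$ is continuous, the composition defining $T^{\mathcal N}_{\rho_\varepsilon}$ in (\ref{Trho}) is compact as an operator on $H^2(\Omega)/\mathbb{R}$.

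Having a compact non-negative selfadjoint operator on the Hilbert space $H^2(\Omega)/\mathbb{R}$, the spectral theorem produces a sequence of non-negative eigenvalues of finite multiplicity accumulating only at $0$. Translating back via the preceding remark, the positive eigenvalues $\lambda_j(\rho_\varepsilon)$ of (\ref{neumannweak2}) are precisely the reciprocals of the positive eigenvalues of $T^{\mathcal N}_{\rho_\varepsilon}$, and since $\lambda=0$ for (\ref{neumannweak2}) corresponds to the (one-dimensional) space of constants, the full spectrum of (\ref{neumannweak2}) lies in $[0,+\infty[$ and consists of a sequence increasing to $+\infty$, each eigenvalue of finite multiplicity. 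The only mild subtlety is noting that the kernel of $T^{\mathcal N}_{\rho_\varepsilon}$ is trivial in $H^2(\Omega)/\mathbb{R}$ (so that every nonzero eigenvalue of $T^{\mathcal N}_{\rho_\varepsilon}$ does correspond to a positive eigenvalue of (\ref{neumannweak2})), which follows from the positivity of $\rho_\varepsilon$: if $T^{\mathcal N}_{\rho_\varepsilon}p[u]=0$ then $\int_\Omega \rho_\varepsilon (\pi^{\sharp,\mathcal N}_{\rho_\varepsilon}u)^2\,dx=0$, forcing $p[u]=0$. No step poses a real obstacle, as the entire argument is a mechanical translation of Theorem \ref{comsa}.
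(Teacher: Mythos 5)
Your proposal is correct and is exactly the argument the paper intends: the paper omits the proof with the remark ``As in Theorem \ref{comsa} it is easy to prove the following,'' and your adaptation --- the same selfadjointness computation with $\mathcal J^{\mathcal N}_{\rho_\varepsilon}\circ i$ in place of $\mathcal J^{\mathcal S}_{\rho}\circ\mathrm{Tr}$, and the compact embedding $H^2(\Omega)\hookrightarrow L^2(\Omega)$ in place of the compactness of the trace operator --- is the intended translation. The additional observation about the triviality of the kernel of $T^{\mathcal N}_{\rho_\varepsilon}$ on $H^2(\Omega)/\mathbb{R}$ is a correct and welcome detail.
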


We have the following theorem on the spectrum of problem (\ref{neumannweak2}) (see also Theorem \ref{comsa2}).
\begin{thm}
The first eigenvalue $\lambda_1({\rho_{\varepsilon}})$ of (\ref{neumannweak2}) is zero and the corresponding eigenfunctions are the constants. Moreover, $\lambda_2({\rho_{\varepsilon}})>0$.\end{thm}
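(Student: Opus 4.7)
The plan is to mimic the argument of Theorem \ref{comsa2} almost verbatim, since the only structural difference between the Steklov weak formulation (\ref{steklovweak}) and the Neumann weak formulation (\ref{neumannweak2}) is that the boundary mass integral $\int_{\partial\Omega}\rho u\varphi\,d\sigma$ is replaced by the volume integral $\int_\Omega\rho_\varepsilon u\varphi\,dx$. The left-hand side, which encodes the quadratic form $\int_\Omega |D^2 u|^2 + \tau|\nabla u|^2\,dx$, is identical, and it is precisely that side which controls the characterization of the kernel.

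First I would observe that constants trivially solve (\ref{neumannweak2}) with $\lambda=0$: if $u\equiv c$, then $D^2 u = 0$ and $\nabla u = 0$, so the left-hand side vanishes for every test function $\varphi\in H^2(\Omega)$, while the right-hand side equals $\lambda c\int_\Omega \rho_\varepsilon\varphi\,dx$, which forces $\lambda = 0$ once $c\neq 0$ and $\varphi$ is chosen with nonzero $\rho_\varepsilon$-average.

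Next I would prove the converse by the standard Rayleigh-quotient trick: take any eigenfunction $u\in H^2(\Omega)$ associated with $\lambda=0$ and set $\varphi = u$ in (\ref{neumannweak2}), obtaining
\begin{equation*}
\int_\Omega |D^2 u|^2 + \tau|\nabla u|^2\,dx = 0.
\end{equation*}
Because $\tau>0$, this immediately yields $\nabla u = 0$ a.e.\ in $\Omega$, and since $\Omega$ is connected $u$ must be constant. Hence the eigenspace of $\lambda_1(\rho_\varepsilon)=0$ is exactly the one-dimensional space of constants, and $\lambda_2(\rho_\varepsilon)$, being the next eigenvalue in the increasing enumeration, is strictly positive.

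There is essentially no obstacle here: the argument is entirely parallel to Theorem \ref{comsa2}, and the role played there by the boundary density $\rho\in\mathcal{R}^{\mathcal S}$ (used only to ensure that $\int_{\partial\Omega}\rho u\,d\sigma=0$ is a nondegenerate constraint identifying the constants) is played here by $\rho_\varepsilon\in\mathcal{R}^{\mathcal N}$, which is bounded and bounded away from zero on $\Omega$ by construction (\ref{rhoeps}), so that the analogous Poincaré--Wirtinger-type argument on $H^{2,\mathcal N}_{\rho_\varepsilon}(\Omega)$ goes through verbatim.
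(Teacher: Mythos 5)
Your proposal is correct and is essentially the argument the paper intends: the paper states this theorem without a separate proof, pointing back to Theorem \ref{comsa2}, and your adaptation (constants give $\lambda=0$; testing with $\varphi=u$ forces $\int_\Omega|D^2u|^2+\tau|\nabla u|^2\,dx=0$, hence $\nabla u=0$ and $u$ constant by connectedness, so the kernel is one-dimensional and $\lambda_2(\rho_\varepsilon)>0$) is exactly that argument transposed to the volume integral on the right-hand side.
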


Now we highlight the relations between problems (\ref{steklovweak}) and (\ref{neumannweak2}) when $\Omega$ is of class $C^2$. In particular we plan to prove the following 

\begin{thm}\label{convergenzacompattaultimo}
Let $\Omega$ be a bounded domain in $\mathbb{R}^N$ of class $C^2$. Let the operators $ T^{\mathcal S}_{\frac{M}{|\partial\Omega|}}$ and $  T^{\mathcal N}_{\rho_{\varepsilon}}$ from $H^2(\Omega)/\mathbb R$ to itself be defined as in (\ref{stekres}) and (\ref{Trho}) respectively. Then the sequence $\lbrace  T^{\mathcal N}_{\rho_{\varepsilon}}\rbrace_{\varepsilon\in]0,\varepsilon_0[}$ converges in norm to $ T^{\mathcal S}_{\frac{M}{|\partial\Omega|}}$ as $\varepsilon\rightarrow 0$.
\end{thm}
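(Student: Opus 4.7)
The plan is to reduce the norm convergence of $T^{\mathcal N}_{\rho_\varepsilon}\to T^{\mathcal S}_{M/|\partial\Omega|}$ to a single analytic estimate: the uniform closeness, on bounded subsets of $H^2(\Omega)\times H^2(\Omega)$, of the two ``mass'' bilinear forms
\begin{equation*}
B_\varepsilon(u,v):=\int_\Omega \rho_\varepsilon u v\,dx,\qquad B(u,v):=\frac{M}{|\partial\Omega|}\int_{\partial\Omega} u v\,d\sigma.
\end{equation*}
Inspecting (\ref{stekres}) and (\ref{Trho}), both target operators are built from the same scalar product (\ref{bilinear-form}) and the same operator $\mathcal P$; they differ only through (i) the mean-zero projections $\pi^{\sharp,\mathcal N}_{\rho_\varepsilon}$ and $\pi^{\sharp,\mathcal S}_{M/|\partial\Omega|}$, and (ii) the ``mass'' factors $\mathcal J^{\mathcal N}_{\rho_\varepsilon}\circ i$ versus $\mathcal J^{\mathcal S}_{M/|\partial\Omega|}\circ\mathrm{Tr}$. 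The first difference is itself controlled by the closeness of the linear functionals $u\mapsto\int_\Omega \rho_\varepsilon u\,dx$ and $u\mapsto\frac{M}{|\partial\Omega|}\int_{\partial\Omega}u\,d\sigma$ on $H^2(\Omega)$, so everything boils down to estimating $B_\varepsilon-B$ in the operator norm of continuous bilinear forms on $H^2(\Omega)$.

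For this key estimate I would split $B_\varepsilon$ using the definition (\ref{rhoeps}) of $\rho_\varepsilon$. On $\Omega_\varepsilon$ the density equals $\varepsilon$, giving a contribution bounded by $\varepsilon\|u\|_{L^2}\|v\|_{L^2}\le C\varepsilon\|u\|_{H^2}\|v\|_{H^2}$, which vanishes as $\varepsilon\to 0$. On the tubular strip $\Omega\setminus\overline\Omega_\varepsilon$ the coefficient equals $(M-\varepsilon|\Omega_\varepsilon|)/|\Omega\setminus\overline\Omega_\varepsilon|$, and since $\Omega$ is $C^2$ the diffeomorphism $(x',t)\mapsto x'-t\nu(x')$ between $\partial\Omega\times(0,\varepsilon)$ and $\Omega\setminus\overline\Omega_\varepsilon$ has Jacobian $J(x',t)=1+O(t)$, from which $|\Omega\setminus\overline\Omega_\varepsilon|=\varepsilon|\partial\Omega|+O(\varepsilon^2)$ and the coefficient is $\frac{M}{\varepsilon|\partial\Omega|}(1+O(\varepsilon))$.

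The remaining task is to control
\begin{equation*}
\left|\frac{1}{\varepsilon}\int_0^\varepsilon\!\!\int_{\partial\Omega} u(x'-t\nu)\,v(x'-t\nu)\,J(x',t)\,d\sigma(x')\,dt-\int_{\partial\Omega}u(x')v(x')\,d\sigma(x')\right|.
\end{equation*}
Writing $u(x'-t\nu)=u(x')-\int_0^t \partial_\nu u(x'-s\nu)\,ds$ and similarly for $v$, one reproduces the surface integral as the leading term, while the remainder is estimated by Cauchy--Schwarz and the coarea-type identity for the tubular parametrization. The resulting error is bounded by $O(\varepsilon^{1/2})$ (or $O(\varepsilon)$) times $\|\nabla u\|_{L^2(\Omega)}\|v\|_{H^2}+\|u\|_{H^2}\|\nabla v\|_{L^2(\Omega)}+\|u\|_{H^2}\|v\|_{H^2}$, provided the trace operator from $H^1(\Omega)$ into $L^2(\partial\Omega_t)$ has norm bounded uniformly in $t\in(0,\varepsilon_0)$. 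Assembling the pieces yields $\|B_\varepsilon-B\|\to 0$, and by the reduction in the first paragraph $\|T^{\mathcal N}_{\rho_\varepsilon}-T^{\mathcal S}_{M/|\partial\Omega|}\|_{\mathcal L(H^2(\Omega)/\mathbb R)}\to 0$.

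The main obstacle is the uniform trace estimate on the family of parallel surfaces $\partial\Omega_t$: one must show that the constant in $\|w\|_{L^2(\partial\Omega_t)}\le C\|w\|_{H^1(\Omega)}$ can be chosen independent of $t\in(0,\varepsilon_0)$. This is exactly where the $C^2$-regularity of $\Omega$ is used: the tubular coordinates give a smooth family of diffeomorphisms of $\partial\Omega_t$ onto $\partial\Omega$ with Jacobians uniformly bounded away from $0$ and $\infty$, and a direct fundamental-theorem-of-calculus argument in the normal direction yields the desired uniformity. Once this is secured, all error terms are genuinely $o(1)$ uniformly on the $H^2$-unit ball and the theorem follows.
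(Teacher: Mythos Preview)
Your argument is correct and takes a genuinely different, more direct route than the paper. The paper does \emph{not} estimate $\|T^{\mathcal N}_{\rho_\varepsilon}-T^{\mathcal S}_{M/|\partial\Omega|}\|$ directly; instead it proves \emph{compact convergence} of the family $\{T^{\mathcal N}_{\rho_\varepsilon}\}$ to $T^{\mathcal S}_{M/|\partial\Omega|}$ (relative compactness of images of bounded families, plus convergence along strongly convergent sequences), and then invokes the abstract fact, for selfadjoint compact operators, that compact convergence implies $\|(T^{\mathcal N}_{\rho_\varepsilon}-T^{\mathcal S}_{M/|\partial\Omega|})^2\|\to 0$, hence norm convergence. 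The technical input there is Lemma~\ref{lemma_neumann_to_steklov}, which handles the projections $\pi^{\sharp,\mathcal N}_{\rho_\varepsilon}\to\pi^{\sharp,\mathcal S}_1$ and the passage $\int_\Omega\rho_\varepsilon(\cdot)\to\frac{M}{|\partial\Omega|}\int_{\partial\Omega}(\cdot)$ only along (weakly) convergent sequences, never uniformly on the unit ball.

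Your reduction is sharper: you recognize (as the paper also uses implicitly, via the identity $(\pi^{\sharp,\mathcal N}_{\rho_\varepsilon})^{-1}\circ(\mathcal P^{\mathcal N}_{\rho_\varepsilon})^{-1}=(\pi^{\sharp,\mathcal S}_1)^{-1}\circ(\mathcal P^{\mathcal S}_1)^{-1}$) that the two resolvent-type operators share the same Riesz map on $H^2(\Omega)/\mathbb R$, so the entire difference is captured by $\langle(T^{\mathcal N}_{\rho_\varepsilon}-T^{\mathcal S}_{M/|\partial\Omega|})u,v\rangle = B_\varepsilon(\pi^{\sharp,\mathcal N}_{\rho_\varepsilon}u,\pi^{\sharp,\mathcal N}_{\rho_\varepsilon}v)-B(\pi^{\sharp,\mathcal S}_1 u,\pi^{\sharp,\mathcal S}_1 v)$, and the projection mismatch is itself governed by $B_\varepsilon(\cdot,1)-B(\cdot,1)$. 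Your tubular-neighborhood computation, together with the uniform trace bound on the parallel surfaces $\partial\Omega_t$ (which is indeed where the $C^2$ hypothesis enters, exactly as in the paper's omitted proof of Lemma~\ref{lemma_neumann_to_steklov}), then yields $\|B_\varepsilon-B\|_{\mathcal L(H^2\times H^2,\mathbb R)}=O(\varepsilon^{1/2})$. This is a strictly stronger statement than what the paper proves: you obtain an explicit rate, and you bypass the Anselone--Palmer / Weidmann machinery entirely. The paper's approach, on the other hand, is more modular---it would adapt with fewer changes to situations where only sequential (not uniform) convergence of the mass forms is available.
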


We need some preliminary results in order to prove Theorem \ref{convergenzacompattaultimo}. We remark that $\pi^{\sharp,\mathcal S}_c=\pi^{\sharp,\mathcal S}_1$ for all $c\in\mathbb R$, with $c\ne 0$. This can be easily deduced by (\ref{piS}).

\begin{lem}\label{lemma_neumann_to_steklov}
Let $\Omega$ be a bounded domain in $\mathbb{R}^N$ of class $C^2$. Let $\rho_{\varepsilon}\in\mathcal R^{\mathcal N}$ be as in (\ref{rhoeps}). Then the following statements hold.
\begin{enumerate}[i)]
\item {For all $\varphi\in {H}^2(\Omega)/\mathbb{R}$, $\pi^{\sharp,\mathcal N}_{\rho_{\varepsilon}}[\varphi]\rightarrow\pi^{\sharp,\mathcal S}_1[\varphi]$ in $L^2(\Omega)$ {\rm (}hence also in ${H}^2(\Omega)${\rm )} as $\varepsilon\rightarrow 0$;}
\item {If $u_{\varepsilon}\rightharpoonup u$ in ${H}^2(\Omega)/\mathbb{R}$, then (possibly passing to a subsequence) $\pi^{\sharp,\mathcal N}_{\rho_{\varepsilon}}[u_{\varepsilon}]\rightarrow\pi^{\sharp,\mathcal S}_1[u]$ in $L^2(\Omega)$ as $\varepsilon\rightarrow 0$;}
\item {Assume that $u_{\eps}, u, w_{\eps}, w\in H^2(\Omega)$ are such that $u_{\varepsilon}\rightarrow u$, $w_{\varepsilon}\rightarrow w$ in $L^2(\Omega)$, ${\rm Tr}[u_{\varepsilon}]\rightarrow {\rm Tr}[u]$, ${\rm Tr} [w_{\varepsilon}]\rightarrow {\rm Tr}[w]$ in $L^2(\partial \Omega)$ as $\varepsilon\rightarrow 0$. Moreover assume that there exists a constant $C>0$ such that $\norm{\nabla u_{\varepsilon}}_{L^2(\Omega)}\leq C$, $\norm{\nabla w_{\varepsilon}}_{L^2(\Omega)}\leq C$ for all $\varepsilon\in]0,\varepsilon_0[$. Then
    \begin{eqnarray*}
    &&\int_{\Omega}\rho_{\varepsilon}\left(u_{\varepsilon}-u\right)w_{\varepsilon} dx\rightarrow 0
    \end{eqnarray*}
    and
    \begin{eqnarray*}
    &&\int_{\Omega}\rho_{\varepsilon}\left(w_{\varepsilon}-w\right)u dx\rightarrow 0
    \end{eqnarray*}
		as $\varepsilon\rightarrow 0$}
\end{enumerate}
\proof
The proof is standard but long and we omit it. We refer to \cite{proz} and references therein for details.
We remark that in order to prove this lemma we need to assume that $\Omega$ is of class $C^2$, since the Tubular Neighborhood Theorem is used to perform computations on the strip $\Omega\setminus\overline{\Omega}_{\varepsilon}$.
\endproof
\end{lem}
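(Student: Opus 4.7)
The plan is to reduce all three statements to elementary estimates on the thin strip $\Omega\setminus\overline{\Omega}_\eps$, using the hypothesis $\Omega\in C^2$ to invoke the Tubular Neighborhood Theorem. This provides, for $\eps\in(0,\eps_0)$, a diffeomorphism $(x',t)\mapsto x'-t\nu(x')$ from $\partial\Omega\times(0,\eps)$ onto $\Omega\setminus\overline{\Omega}_\eps$ with Jacobian $J(t,x')=1+O(t)$ uniformly in $x'$. Two basic consequences I would rely on are the co-area-type identity
\begin{equation*}
\int_{\Omega\setminus\overline{\Omega}_\eps} g\,dx=\int_0^\eps \int_{\partial\Omega} g(x'-t\nu(x'))\,J(t,x')\,d\sigma(x')\,dt,
\end{equation*}
so in particular $|\Omega\setminus\overline{\Omega}_\eps|=\eps|\partial\Omega|+O(\eps^2)$, together with the Newton--Leibniz estimate
\begin{equation*}
\int_0^\eps \int_{\partial\Omega}|f(x'-t\nu)-f(x')|^2\,d\sigma\,dt\leq C\eps^2\|\nabla f\|_{L^2(\Omega)}^2,\qquad f\in H^1(\Omega),
\end{equation*}
obtained by Cauchy--Schwarz applied to $f(x'-t\nu)-f(x')=-\int_0^t\nabla f(x'-s\nu)\cdot\nu\,ds$.

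For (i), since $\int_\Omega \rho_\eps\,dx=M$, the difference $\pi^{\sharp,\mathcal N}_{\rho_\eps}[\varphi]-\pi^{\sharp,\mathcal S}_1[\varphi]$ is the constant $\frac{1}{|\partial\Omega|}\int_{\partial\Omega}\varphi\,d\sigma-\frac{1}{M}\int_\Omega\varphi\rho_\eps\,dx$, so it suffices to show this constant vanishes as $\eps\to 0$. Splitting $\int_\Omega\varphi\rho_\eps\,dx$ along $\Omega=\Omega_\eps\cup(\Omega\setminus\overline{\Omega}_\eps)$, the interior piece $\eps\int_{\Omega_\eps}\varphi\,dx=O(\eps)$; for the strip piece $\frac{M-\eps|\Omega_\eps|}{|\Omega\setminus\overline{\Omega}_\eps|}\int_{\Omega\setminus\overline{\Omega}_\eps}\varphi\,dx$ the prefactor behaves like $M/(\eps|\partial\Omega|)$, while the tubular identity combined with the Newton--Leibniz estimate gives $\int_{\Omega\setminus\overline{\Omega}_\eps}\varphi\,dx=\eps\int_{\partial\Omega}{\rm Tr}[\varphi]\,d\sigma+o(\eps)$, so the product tends to $\frac{M}{|\partial\Omega|}\int_{\partial\Omega}{\rm Tr}[\varphi]\,d\sigma$, as required. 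Convergence in $L^2(\Omega)$ follows trivially since we are comparing constants.

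For (ii), weak convergence $u_\eps\rightharpoonup u$ in $H^2(\Omega)/\mathbb{R}$ produces, via Rellich, strong convergence $u_\eps\to u$ in $L^2(\Omega)$, and by compactness of the trace operator from $H^1(\Omega)$ to $L^2(\partial\Omega)$, strong convergence ${\rm Tr}[u_\eps]\to{\rm Tr}[u]$ in $L^2(\partial\Omega)$ (up to a subsequence). The difference $\pi^{\sharp,\mathcal N}_{\rho_\eps}[u_\eps]-\pi^{\sharp,\mathcal S}_1[u]$ splits as $(u_\eps-u)+c_\eps$ with $c_\eps\in\mathbb{R}$ equal to the mismatch between the two averages. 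Since $u_\eps-u\to 0$ in $L^2$, one is left with showing $c_\eps\to 0$; this is done by rerunning the argument of (i) with $\varphi$ replaced by $u_\eps$, using the uniform $H^1$-bound and the $L^2(\partial\Omega)$-convergence of the traces to keep every remainder term $o(1)$ uniformly in $\eps$.

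The heart of the lemma is (iii). Splitting $\Omega=\Omega_\eps\cup(\Omega\setminus\overline{\Omega}_\eps)$, the interior contribution
\begin{equation*}
\eps\int_{\Omega_\eps}(u_\eps-u)w_\eps\,dx
\end{equation*}
is bounded by $\eps\|u_\eps-u\|_{L^2(\Omega)}\|w_\eps\|_{L^2(\Omega)}\to 0$. For the strip piece, whose prefactor is of order $1/\eps$, I would use the tubular change of variables with the algebraic identity
\begin{equation*}
f_t h_t=f_0 h_0+(f_t-f_0)h_0+f_t(h_t-h_0),\qquad f_t(x'):=f(x'-t\nu(x')),
\end{equation*}
applied with $f=u_\eps-u$, $h=w_\eps$. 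The leading term integrates to $\eps\int_{\partial\Omega}{\rm Tr}[u_\eps-u]\,{\rm Tr}[w_\eps]\,d\sigma+O(\eps^2)$, which is $o(\eps)$ since ${\rm Tr}[u_\eps-u]\to 0$ in $L^2(\partial\Omega)$ while ${\rm Tr}[w_\eps]$ is bounded there. The remainders are controlled via Cauchy--Schwarz together with the Newton--Leibniz estimate and the hypothesised bounds $\|\nabla u_\eps\|_{L^2},\|\nabla w_\eps\|_{L^2}\leq C$, yielding contributions of order $\eps^{3/2}=o(\eps)$; multiplication by the $O(1/\eps)$ prefactor then produces the claimed vanishing. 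The second integral in (iii) is identical after exchanging the roles of the two factors and using that $u$ is $\eps$-independent. The main obstacle in executing this plan is the careful tracking of the $J=1+O(t)$ Jacobian corrections and of the three-term splitting so that every remainder is genuinely $o(\eps)$; this is a routine but lengthy bookkeeping, which presumably explains why the authors defer the details to \cite{proz}.
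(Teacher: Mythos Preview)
Your sketch is correct and follows precisely the approach the paper indicates: the authors omit the proof entirely, remarking only that it is ``standard but long'' and that the $C^2$ hypothesis is needed to invoke the Tubular Neighborhood Theorem on the strip $\Omega\setminus\overline{\Omega}_\eps$, which is exactly the tool your argument is built around. Your co-area/Newton--Leibniz bookkeeping for part~(iii) is the expected computation and the remainder estimates of order $o(\eps)$ are sound.
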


%Now we are ready to prove Theorem \ref{convergenzacompattaultimo}.

\begin{proof}[Proof of Theorem \ref{convergenzacompattaultimo}]
It is sufficient to prove that the family $\left\{T^{\mathcal N}_{\rho_{\varepsilon}}\right\}_{\eps\in]0,\eps_0[}$ of compact operators, compactly converges to the compact operator $ T^{\mathcal S}_{\frac{M}{|\partial\Omega|}}$. This implies, in fact, that 
\begin{equation}\label{squareconv}
\lim_{{\varepsilon}\rightarrow 0}\norm{\left(T^{\mathcal N}_{\rho_{\varepsilon}}- T^{\mathcal S}_{\frac{M}{|\partial\Omega|}}\right)^2}_{\mathcal L(H^2(\Omega)/\mathbb R,H^2(\Omega)/\mathbb R)}=0.
\end{equation}
Then, since the operators $\left\{T^{\mathcal N}_{\rho_{\varepsilon}}\right\}_{\eps\in]0,\eps_0[}$ and $ T^{\mathcal S}_{\frac{M}{|\partial\Omega|}}$ are selfadjoint, property (\ref{squareconv}) is equivalent to convergence in norm. We refer to \cite{anselone,wei} for a more detailed discussion on compact convergence of compact operators on Hilbert spaces. 
We recall that, by definition, $ T^{\mathcal N}_{\rho_{\varepsilon}}$ compactly converges to $ T^{\mathcal S}_{\frac{M}{|\partial\Omega|}}$ if the following requirements are fulfilled:
\begin{enumerate}[i)]
\item if $\norm{u_{\varepsilon}}_{{{H}^2(\Omega)/\rea}}\leq C$ for all $\eps\in]0,\eps_0[ $, then the family $\lbrace T^{\mathcal N}_{\rho_{\eps}} u_{\eps}\rbrace_{\eps\in]0,\eps_0[ }$ is relatively compact in ${{H}^2(\Omega)/\rea}$;
\item if $u_{\eps}\rightarrow u$ in ${{H}^2(\Omega)/\rea}$, then $ T^{\mathcal N}_{\rho_{\eps}} u_{\eps}\rightarrow  T^{\mathcal S}_{\frac{M}{|\partial\Omega|}} u$ in ${{H}^2(\Omega)/\rea}$.
\end{enumerate}
We prove i) first. For a fixed $u\in {{H}^2(\Omega)/\rea}$ we have
\begin{multline}\label{firstrow}
\lim_{\eps\rightarrow 0}\int_{\Omega} \rho_{\eps} \pi^{\sharp,\mathcal N}_{\rho_{\eps}}[u] dx = \lim_{\eps\rightarrow 0}\int_\Omega\rho_{\eps}\left(\pi^{\sharp,\mathcal N}_{\rho_{\eps}}[u]-\pi^{\sharp,\mathcal S}_1[u]\right)dx\\
+\left(\lim_{{\eps}\rightarrow 0}\int_{\Omega}\rho_{\eps}\pi^{\sharp,\mathcal S}_1[u]dx-\frac{M}{|\partial\Omega|}\int_{\partial \Omega}\pi^{\sharp,\mathcal S}_1[u]d\sigma\right)\\
+\frac{M}{|\partial\Omega|}\int_{\partial \Omega}\pi^{\sharp,\mathcal S}_1[u]d\sigma.
\end{multline}
By Lemma \ref{lemma_neumann_to_steklov} we have that the first summand in the right-hand side of (\ref{firstrow}) goes to zero as ${\eps}\rightarrow 0$ and by standard calculus we have that the second term goes to zero as ${\eps}\rightarrow 0$. Moreover, the equality $(\pi^{\sharp,\mathcal N}_{\rho_{\eps}})^{-1}\circ(\mathcal P^{\mathcal N}_{\rho_{\eps}})^{-1}=(\pi^{\sharp,\mathcal S}_{1})^{-1}\circ(\mathcal P^{\mathcal S}_{1})^{-1}$ holds. Therefore, it follows that $ T^{\mathcal N}_{\rho_{\eps}} u$ is bounded for each $u\in {{H}^2(\Omega)/\rea}$. Thus, by Banach-Steinhaus Theorem, there exists $C'$ such that $\norm{ T^{\mathcal N}_{\rho_{\eps}}}_{\mathcal L({{H}^2(\Omega)/\rea},{{H}^2(\Omega)/\rea})}\leq C'$ for all ${\eps}\in ]0,\varepsilon_0[$. Moreover, since $\norm{u_{\eps}}_{{{H}^2(\Omega)/\rea}}\leq C$ for all ${\eps}\in]0,\eps_0[$, possibly passing to a subsequence, we have that $u_{\eps}\rightharpoonup u$ in ${{H}^2(\Omega)/\rea}$, for some $u\in H^2(\Omega)/\mathbb R$. This implies that, possibly passing to a subsequence, $ T^{\mathcal N}_{\rho_{\eps}} u_{\eps}\rightharpoonup w$ in ${{H}^2(\Omega)/\rea}$ as ${\eps}\rightarrow 0$, for some $w\in H^2(\Omega)/\mathbb R$. We show that $w= T^{\mathcal S}_{\frac{M}{|\partial\Omega|}}u$. To shorten our notation we set $w_{\eps}:= T^{\mathcal N}_{\rho_{\eps}} u_{\eps}$. By Lemma \ref{lemma_neumann_to_steklov} we have
\begin{eqnarray*}
\lim_{{\eps}\rightarrow 0}\int_{\Omega}D^2(\pi^{\sharp,\mathcal N}_{\rho_{\eps}}[w_{\eps}]):D^2(\pi^{\sharp,\mathcal N}_{\rho_{\eps}}[\varphi])+\tau\nabla(\pi^{\sharp,\mathcal N}_{\rho_{\eps}}[w_{\eps}])\cdot\nabla(\pi^{\sharp,\mathcal N}_{\rho_{\eps}}[\varphi])dx\\
=\int_{\Omega}D^2(\pi^{\sharp,\mathcal S}_{1}[w]):D^2(\pi^{\sharp,\mathcal S}_{1}[\varphi])+\tau\nabla(\pi^{\sharp,\mathcal S}_{1}[w])\cdot\nabla(\pi^{\sharp,\mathcal S}_{1}[\varphi])dx,
\end{eqnarray*}
for all $\varphi\in {{H}^2(\Omega)/\rea}$. On the other hand, since $\left(\mathcal P^{\mathcal N}_{\rho_{\eps}}\circ\pi^{\sharp,\mathcal N}_{\rho_{\eps}}\right)w_{\eps}=\left(\mathcal J^{\mathcal N}_{\rho_{\eps}}\circ i\circ \pi^{\sharp,\mathcal N}_{\rho_{\eps}}\right)u_{\eps}$, we have that
\begin{eqnarray*}
\int_{\Omega}D^2(\pi^{\sharp,\mathcal N}_{\rho_{\eps}}[w_{\eps}]):D^2(\pi^{\sharp,\mathcal N}_{\rho_{\eps}}[\varphi])+\tau\nabla(\pi^{\sharp,\mathcal N}_{\rho_{\eps}}[w_{\eps}])\cdot\nabla(\pi^{\sharp,\mathcal N}_{\rho_{\eps}}[\varphi])dx=\int_{\Omega}\rho_{\eps}\pi^{\sharp,\mathcal N}_{\rho_{\eps}}[u_{\eps}]\pi^{\sharp,\mathcal N}_{\rho_{\eps}}[\varphi]dx%\label{neumanntosteklov_1}.
\end{eqnarray*}
Then, by Lemma \ref{lemma_neumann_to_steklov}, $iii)$ we have
\begin{multline*}
<w,\varphi>_{{{H}^2(\Omega)/\rea}}
=\lim_{{\eps}\rightarrow 0}<w_{\eps},\varphi>_{{{H}^2(\Omega)/\rea}}
=\lim_{{\eps}\rightarrow 0}\int_{\Omega}\rho_{{\eps}}\pi^{\sharp,\mathcal N}_{\rho_{\eps}}[u_{\eps}]\pi^{\sharp,\mathcal N}_{\rho_{\eps}}[\varphi]dx\\
=\lim_{{\eps}\rightarrow 0}\int_{\Omega}\rho_{\eps}\left(\pi^{\sharp,\mathcal N}_{\rho_{\eps}}[u_{\eps}]-\pi^{\sharp,\mathcal S}_1[u]\right)\pi^{\sharp,\mathcal N}_{\rho_{\eps}}[\varphi]dx
+\lim_{{\eps}\rightarrow 0}\int_{\Omega}\rho_{\eps}\pi^{\sharp,\mathcal S}_1[u]\left(\pi^{\sharp,\mathcal N}_{\rho_{\eps}}[\varphi]-\pi^{\sharp,\mathcal S}_1[\varphi]\right)dx\\
+\lim_{{\eps}\rightarrow 0}\int_\Omega\rho_{\eps}\pi^{\sharp,\mathcal S}_1[u]\pi^{\sharp,\mathcal S}_1[\varphi]dx
=\frac{M}{|\partial\Omega|}\int_{\partial \Omega}\pi^{\sharp,\mathcal S}_1[u]\pi^{\sharp,\mathcal S}_1[\varphi]d\sigma
=< T^{\mathcal S}_{\frac{M}{|\partial\Omega|}}u,\varphi>_{{{H}^2(\Omega)/\rea}},
%=\int_{\Omega}D^2(\pi^{\sharp,\mathcal S}_1[u]):D^2(\pi^{\sharp,\mathcal S}_1[\varphi])+\tau\nabla(\pi^{\sharp,\mathcal S}_1[u])\cdot\nabla(\pi^{\sharp,\mathcal S}_1[\varphi])dx\\
\end{multline*}
hence $w= T^{\mathcal S}_{\frac{M}{|\partial\Omega|}}u$. In a similar way one can prove that $\norm{w_{\eps}}_{{{H}^2(\Omega)/\rea}}\rightarrow\norm{w}_{{{H}^2(\Omega)/\rea}}$. In fact
\begin{multline*}
\lim_{{\eps}\rightarrow 0}\norm{w_{\varepsilon}}^2_{{{H}^2(\Omega)/\rea}}
=\lim_{{\eps}\rightarrow 0}\int_{\Omega}\rho_{\eps}\left(\pi^{\sharp,\mathcal N}_{\rho_{\eps}}[u_{\eps}]-\pi^{\sharp,\mathcal S}_1[u]\right)\pi^{\sharp,\mathcal N}_{\rho_{\eps}}[w_{\eps}]dx\\+\lim_{{\eps}\rightarrow 0}\int_{\Omega}\rho_{\eps}\pi^{\sharp,\mathcal S}_1[u]\left(\pi^{\sharp,\mathcal N}_{\rho_{\eps}}[w_{\eps}]-\pi^{\sharp,\mathcal S}_1[w_{\eps}]\right)dx\\+\lim_{{\eps}\rightarrow 0}\int_{\Omega}\rho_{\eps}\pi^{\sharp,\mathcal S}_1[u]\left(\pi^{\sharp,\mathcal S}_1[w_{\eps}]-\pi^{\sharp,\mathcal S}_1[w]\right)dx\\
+\lim_{{\eps}\rightarrow 0}\int_{\Omega}\rho_{\eps}\pi^{\sharp,\mathcal S}_1[u]\pi^{\sharp,\mathcal S}_1[w]dx\\
=\frac{M}{|\partial\Omega|}\int_{\partial \Omega}\pi^{\sharp,\mathcal S}_1[u]\pi^{\sharp,\mathcal S}_1[w]d\sigma=\norm{w}^2_{{{H}^2(\Omega)/\rea}}.
\end{multline*}
This proves $i)$. As for point $ii)$, let $u_{\eps}\rightarrow u$ in ${{H}^2(\Omega)/\rea}$. Then there exists $C''$ such that $\norm{u_{\eps}}_{{{H}^2(\Omega)/\rea}}\leq C''$ for all ${\eps}$. Then, by the same argument used for point $i)$, for each sequence ${\eps}_j\rightarrow 0$, possibly passing to a subsequence, we have $ T^{\mathcal N}_{\rho_{{\eps}_j}} u_{{\eps}_j}\rightarrow  T^{\mathcal S}_{\frac{M}{|\partial\Omega|}} u$. Since this is true  for each $\lbrace {\eps}_j\rbrace_{j\in\mathbb N}$, we have the convergence for the whole family, i.e., $T^{\mathcal N}_{\rho_{\eps}} u_{\eps}\rightarrow  T^{\mathcal S}_{\frac{M}{|\partial\Omega|}} {u}$. This concludes the proof.
\end{proof}

Now we recall the following well-known result.
\begin{thm}\label{convergselfadjoint}
Let $H$ be a real Hilbert space and $\left\{A_{\varepsilon}\right\}_{{\varepsilon}\in]0,{\varepsilon_0}[}$ a family of bounded selfadjoint operators converging in norm to the bounded selfadjoint operator $A$, i.e., $\lim_{{\varepsilon}\rightarrow 0}\norm{A_{\varepsilon}-A}_{\mathcal L(H,H)}$ $=0$. Then isolated eigenvalues $\lambda$ of $A$ of finite multiplicity are exactly the limits of eigenvalues of $A_{\varepsilon}$, counting multiplicity; moreover, the corresponding eigenprojections converge in norm.
\end{thm}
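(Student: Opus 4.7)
The plan is to apply the classical Riesz projection / contour integral method from spectral perturbation theory. Let $\lambda$ be an isolated eigenvalue of $A$ of finite multiplicity $m$. Since $\lambda$ is isolated in $\sigma(A)$, I would fix a small positively oriented circle $\Gamma\subset\mathbb C$ centred at $\lambda$ such that the closed disc bounded by $\Gamma$ meets $\sigma(A)$ only at $\lambda$. The second resolvent identity
\[
(z-A_{\varepsilon})^{-1}-(z-A)^{-1}=(z-A_{\varepsilon})^{-1}(A_{\varepsilon}-A)(z-A)^{-1},
\]
together with the openness of the resolvent set and the norm convergence $A_{\varepsilon}\to A$, then yields that $\Gamma$ lies in the resolvent set of $A_{\varepsilon}$ for all small $\varepsilon$ and that $(z-A_{\varepsilon})^{-1}\to (z-A)^{-1}$ uniformly in $z\in\Gamma$.

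Next I would define the Riesz spectral projections
\[
P:=\frac{1}{2\pi i}\oint_\Gamma(z-A)^{-1}\,dz,\qquad P_{\varepsilon}:=\frac{1}{2\pi i}\oint_\Gamma(z-A_{\varepsilon})^{-1}\,dz.
\]
The uniform resolvent convergence on $\Gamma$ gives immediately $\norm{P_{\varepsilon}-P}_{\mathcal L(H,H)}\to 0$, which is the norm convergence of eigenprojections asserted in the statement. Because $A$ is selfadjoint, $P$ is the orthogonal projection onto $\ker(A-\lambda)$, so $\dim PH=m$. A standard lemma, based on the invertibility of $PP_{\varepsilon}+(I-P)(I-P_{\varepsilon})$ when $\norm{P_{\varepsilon}-P}<1$, then yields $\dim P_{\varepsilon}H=\dim PH=m$ for all sufficiently small $\varepsilon$.

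Then I would extract the eigenvalue convergence from the finite-dimensional reduction: the part of $\sigma(A_{\varepsilon})$ enclosed by $\Gamma$ coincides with $\sigma(A_{\varepsilon}|_{P_{\varepsilon}H})$, an $m$-element multiset of real numbers. Since $A_{\varepsilon}P_{\varepsilon}\to AP=\lambda P$ in norm and the subspaces $P_{\varepsilon}H$ and $PH$ are intertwined by the invertible map above, these $m$ eigenvalues of $A_{\varepsilon}$ all converge to $\lambda$. Conversely, if eigenvalues $\lambda_{\varepsilon}\in\sigma(A_{\varepsilon})$ accumulate at some $\mu\notin\sigma(A)$, then $\mu$ lies in the resolvent set of $A$, and the resolvent argument above forces $\mu$ in the resolvent set of $A_{\varepsilon}$ for small $\varepsilon$, contradicting $\lambda_{\varepsilon}\to\mu$. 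This covers both directions of the ``exactly the limits'' claim.

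The main obstacle is the rank-preservation step. Without it one only knows that eigenvalues of $A_{\varepsilon}$ near $\lambda$ tend to $\lambda$, but not that precisely $m$ of them do so; this control on dimension is exactly what norm convergence of $P_{\varepsilon}$ buys us, and is why one cannot bypass the projection formalism and argue directly with eigenvalue sequences. Everything else reduces to uniform continuity of the resolvent on the compact contour $\Gamma$, which is essentially automatic once $\Gamma\subset\rho(A)$ and $A_{\varepsilon}\to A$ in operator norm.
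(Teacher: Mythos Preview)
The paper does not actually prove this theorem: it is introduced with the phrase ``Now we recall the following well-known result'' and is stated without proof, serving only as a black box to deduce Corollary~\ref{corr4} from Theorem~\ref{convergenzacompattaultimo}. So there is no paper proof to compare against.

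Your argument via Riesz spectral projections is the standard one and is correct. The contour integral representation of the spectral projection, the uniform resolvent convergence on $\Gamma$ obtained from the second resolvent identity, the resulting norm convergence $P_\varepsilon\to P$, and the rank-preservation lemma (invertibility of $PP_\varepsilon+(I-P)(I-P_\varepsilon)$ when $\norm{P_\varepsilon-P}<1$) together give exactly the multiplicity-counting statement. Your converse direction is also fine. One minor remark: since $H$ is a \emph{real} Hilbert space, the contour integral should, strictly speaking, be performed after complexifying $H$ and extending $A$, $A_\varepsilon$ complex-linearly; the resulting projections then restrict back to $H$ because the spectrum is real. This is routine and does not affect the argument.
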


Thanks to Theorem \ref{convergselfadjoint}, as an immediate corollary of Theorem \ref{convergenzacompattaultimo} we have
\begin{corol}\label{corr4}
Let $\Omega$ be a bounded domain in $\mathbb{R}^N$ of class $C^2$. Let $\lambda_j(\rho_{\varepsilon})$ be the eigenvalues of problem (\ref{neumannweak2}) on $\Omega$ for all $j\in\mathbb N$. Let $\lambda_j$, $j\in\mathbb N$ denote the eigenvalues of problem (\ref{steklovweak}) corresponding to the constant surface density $\frac{M}{|\partial\Omega|}$. Then $\lim_{\varepsilon\rightarrow 0}\lambda_j(\rho_{\varepsilon})=\lambda_j$ for all $j\in\mathbb N$.
\end{corol}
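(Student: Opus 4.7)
The plan is to deduce this corollary directly from the norm convergence of the resolvent-type operators established in Theorem \ref{convergenzacompattaultimo}, combined with the general spectral stability result for selfadjoint operators in Theorem \ref{convergselfadjoint}.

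First I would recall that, by the remarks following the definitions of $T^{\mathcal S}_\rho$ and $T^{\mathcal N}_{\rho_\varepsilon}$, the positive eigenvalues of problems (\ref{steklovweak}) and (\ref{neumannweak2}) are in one-to-one correspondence with the nonzero eigenvalues of the compact selfadjoint operators $T^{\mathcal S}_{M/|\partial\Omega|}$ and $T^{\mathcal N}_{\rho_\varepsilon}$ on $H^2(\Omega)/\mathbb{R}$, via the reciprocal map $\lambda \mapsto \lambda^{-1}$. Since both operators are compact, every nonzero point in their spectrum is an isolated eigenvalue of finite multiplicity, so Theorem \ref{convergselfadjoint} is applicable.

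Next I would apply Theorem \ref{convergenzacompattaultimo} to conclude that $T^{\mathcal N}_{\rho_\varepsilon} \to T^{\mathcal S}_{M/|\partial\Omega|}$ in the operator norm of $\mathcal L(H^2(\Omega)/\mathbb{R},H^2(\Omega)/\mathbb{R})$ as $\varepsilon \to 0$. Theorem \ref{convergselfadjoint} then implies that each isolated eigenvalue $\mu_j := \lambda_j^{-1}$ of $T^{\mathcal S}_{M/|\partial\Omega|}$, counted with multiplicity, is the limit of eigenvalues $\mu_j(\rho_\varepsilon) := \lambda_j(\rho_\varepsilon)^{-1}$ of $T^{\mathcal N}_{\rho_\varepsilon}$. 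Passing to reciprocals (continuous on $(0,+\infty)$) gives $\lambda_j(\rho_\varepsilon) \to \lambda_j$ for all $j \geq 2$.

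Finally, I would handle the trivial eigenvalue $\lambda_1 = 0$ separately: both problems have $\lambda_1(\rho_\varepsilon) = \lambda_1 = 0$ with constant eigenfunctions, so there is nothing to prove in that case. No step here should pose a real difficulty since the work has all been done in Theorem \ref{convergenzacompattaultimo}; the mildly delicate point is just to verify that the indexing of eigenvalues (in increasing order with multiplicity) matches on both sides, which follows from the norm convergence of spectral projections provided by Theorem \ref{convergselfadjoint}.
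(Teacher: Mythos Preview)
Your proposal is correct and follows essentially the same approach as the paper: the paper states this corollary as an immediate consequence of Theorem \ref{convergenzacompattaultimo} via Theorem \ref{convergselfadjoint}, without further argument. Your additional care in passing through the reciprocal map and handling the eigenvalue $\lambda_1=0$ separately just makes explicit what the paper leaves implicit.
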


%%%%%%%%%%%%%%%%%%%%%%%%%%%%%%%%%%%%%%%%%%%%%%%%%%%%%%%%%%%%%%%%%%%%%%%%%%%%%%%%%%%%%%%%%%%%%%%%%%%%%%%%%%%%5
%%%%%%%%%%%%%%%%%%%%%%%%%%%%%%%%%%%%%%%%%%%%%%%%%%%%%%%%%%%%%%%%%%%%%%%%%%%%%%%%%%%%%%%%%%%%%%%%%%%%%%%%%%55555
%%%%%%%%%%%%%%%%%%%%%%%%%%%%%%%%%%%
%%%%%%%%%%%%%%%%%%%%%%%%%%%%%%%
%%%%%%%%%%%%%%%%%%%%%%%%%%%%%%%%%%
%%%%%%%%%%%%%%%%%%%%%%%%%%%%%%%%%%                         ISOVOLUMETRIC PERTURBATIONS
%%%%%%%%%%%%%%%%%%%%%%%%%%%%%%%%%%%%%%%%%%%%%%%
%%%%%%%%%%%%%%%%%%%%%%%%%%%%%%%%%%%%%%%%%%%%%%%%%%%%%%%%%%%%%%%%%%%%%%%%%%%%%%%%%%%%%%%%%%%%%%%%%%%%%%%%%%%%%

\section{Symmetric functions of the eigenvalues. Isovolumetric perturbations}
\label{sec:4}

In this section we compute Hadamard-type formulas for both the Steklov and the Neumann problems, which will be used to investigate the behavior of the eigenvalues subject to isovolumetric perturbations. To do so, we use the so called transplantation method, see \cite{henry} for a general introduction to this approach.
We will study problems (\ref{Steklov-Bi}) and (\ref{neumannweak}) in $\phi(\Omega)$, for a suitable homeomorphism $\phi$, where $\Omega$ has to be thought as a fixed bounded domain of class $C^1$. Therefore, we	introduce the following class of functions
$$
\Phi(\Omega)=\left\{\phi\in\left(C^2\left(\overline{\Omega}\right)\right)^N:\phi\ {\rm injective}\ {\rm and}\ \inf_{\Omega}|\det D\phi|>0\right\}.
$$
We observe that if $\Omega$ is of class $C^1$ and $\phi\in\Phi(\Omega)$, then also $\phi(\Omega)$ is of class $C^1$ and $\phi^{(-1)}\in\Phi(\phi(\Omega))$. Therefore, it makes sense to study both problem (\ref{Steklov-Bi}) and problem (\ref{neumannweak}) on $\phi(\Omega)$. Moreover, we endow the space $C^2(\overline{\Omega})$ with the standard norm
$$
||f||_{C^2(\overline{\Omega})}=\sup_{|\alpha|\le 2,\ x\in\overline{\Omega}}|D^{\alpha}f(x)|.
$$
Note that $\Phi(\Omega)$ is open in $\left(C^2(\overline{\Omega})\right)^N$ (see \cite[Lemma 3.11]{lala2004}).

We recall that it has been pointed out that balls play a relevant role in the study of isovolumetric perturbations of the domain $\Omega$ for all the eigenvalues of the Dirichlet and Neumann Laplacian. We refer to \cite{lala2004,lala2007}, where the authors prove that the elementary symmetric functions of the eigenvalues depend real analytically on the domain, providing also Hadamard-type formulas for the corresponding derivatives. Then, in \cite{lalacri} they show that balls are critical points for such functions  under volume constraint.

From now on we will consider problems (\ref{Steklov-Bi}) and (\ref{neumannweak}) with constant mass density $\rho\equiv 1$.

\subsection{The Steklov problem}

We plan to study the Steklov problem in the domain $\phi(\Omega)$ for $\phi\in\Phi(\Omega)$, i.e.,
\begin{equation}
\label{steklovphi}
\left\{
\begin{array}{ll}
\Delta^2u-\tau\Delta u=0, & {\rm in\ } \phi(\Omega),\\
\frac{\partial^2u}{\partial\nu^2}=0, & {\rm on\ }\partial\phi(\Omega),\\
\tau\frac{\partial u}{\partial\nu}-\mathrm{div}_{\partial\phi(\Omega)}(D^2u.\nu)-\frac{\partial\Delta u}{\partial\nu}=\lambda u,&{\rm on\ }\partial\phi(\Omega).
\end{array}\right.
\end{equation}
To do so, we pull it back to $\Omega$. Therefore, we are interested in the operator $\mathcal P^{\mathcal S}_{\phi}$ from $ H^{2,\mathcal S}_{\phi}(\Omega)$ to $F(\Omega)$,
	defined by
	\begin{multline}
	\label{deltaphi}
	\mathcal P^{\mathcal S}_{\phi}[u][\varphi]:=\int_{\Omega}(D^2(u\circ\phi^{-1})\circ\phi):(D^2(\varphi\circ\phi^{-1})\circ\phi)|\det D\phi|dx\\
		+\tau\int_{\Omega}(\nabla(u\circ\phi^{-1})\circ\phi)\cdot(\nabla(\varphi\circ\phi^{-1})\circ\phi)|\det D\phi|dx,\ 
		\forall u\in H^{2,\mathcal S}_{\phi}(\Omega), \varphi\in H^2(\Omega),
	\end{multline}
	where
	$$
	 H^{2,\mathcal S}_{\phi}(\Omega):=\left\{u\in H^2(\Omega):\int_{\partial\Omega}u|\nu(\nabla\phi)^{-1}||\det D\phi|d\sigma=0\right\}.
	$$
Moreover, for every $\phi\in\Phi(\Omega)$, we consider the map ${\mathcal J}^{\mathcal S}_{\phi}$ from $L^2(\partial\Omega)$ to $H^2(\Omega)'$ defined by
	\begin{equation*}
	{\mathcal J}^{\mathcal S}_{\phi}[u][\varphi]:=\int_{\partial\Omega}u\varphi|\nu(\nabla\phi)^{-1}||\det D\phi|d\sigma,\ \forall u\in L^2(\partial\Omega),\varphi\in H^2(\Omega).
	\end{equation*}	
It is easily seen that the form (\ref{deltaphi}) is a scalar product on $ H^{2,\mathcal S}_{\phi}(\Omega)$. We will think of the space ${H}^{2,\mathcal S}_{\phi}(\Omega)$ as endowed with the scalar product (\ref{deltaphi}). We denote by ${\pi}^{\mathcal S}_{\phi}$  the map from $H^2(\Omega)$ to $ H^{2,\mathcal S}_{\phi}(\Omega)$ defined by
	$$
	\pi^{\mathcal S}_{\phi}(u)=:u-\frac{\int_{\partial\Omega}u|\nu(\nabla\phi)^{-1}||\det D\phi|dx}{\int_{\partial\Omega}|\nu(\nabla\phi)^{-1}||\det D\phi|dx},
	$$
	and by $\pi_{\phi}^{\sharp,\mathcal S}$ the map from $H^2(\Omega)/\mathbb{R}$ onto $ H^{2,\mathcal S}_{\phi}(\Omega)$ defined by the equality $\pi^{\mathcal S}_{\phi}=\pi_{\phi}^{\sharp,\mathcal S}\circ p$.
	Clearly, $\pi_{\phi}^{\sharp,\mathcal S}$ is a homeomorphism, and we can recast problem (\ref{steklovphi}) as
$$
\lambda^{-1}u=W^{\mathcal S}_{\phi}u,
$$
where
$$
W^{\mathcal S}_{\phi}:=(\pi_{\phi}^{\sharp,\mathcal S})^{-1}\circ(\mathcal P^{\mathcal S}_{\phi})^{-1}\circ{\mathcal J}^{\mathcal S}_{\phi}\circ\mathrm{Tr}\circ\pi_{\phi}^{\sharp,\mathcal S}.
$$
The operator $W^{\mathcal S}_{\phi}$ can be shown to be compact and selfadjoint, as we have done for the operator $ T^{\mathcal S}_{\rho}$ defined by (\ref{stekres}) in Theorem \ref{comsa} (see also \cite[Theorem 2.1]{lala2007}).

In order to avoid bifurcation phenomena, which usually occur when dealing with multiple eigenvalues, we turn our attention to the elementary symmetric functions of the eigenvalues. This is the aim of the following
	
	\begin{thm}
	\label{thesame}
	Let $\Omega$ be a bounded domain of $\mathbb{R}^N$ of class $C^1$.
	Let $F$ be a finite non-empty subset of $\mathbb{N}$.
	Let
	$$\mathcal{A}_{\Omega}[F]:=\{\phi\in\Phi(\Omega):\lambda_l[\phi]\notin\{\lambda_j[\phi]:j\in F\}\ \forall l\in\mathbb{N}
				\setminus F\}.$$
	Then the following statements hold.
	\begin{enumerate}[i)]
	\item The set $\mathcal{A}_{\Omega}[F]$ is open in $\Phi(\Omega)$. The map $P_F$ of $\mathcal{A}_{\Omega}[F]$ to the space
		$\mathcal{L}\left(H^2(\Omega),H^2(\Omega)\right)$ which takes $\phi\in\mathcal{A}_{\Omega}[F]$ to the orthogonal projection
		of $H^{2,\mathcal S}_{\phi}(\Omega)$ onto its (finite dimensional) subspace generated by
		$$\left\{u\in  H^{2,\mathcal S}_{\phi}(\Omega):\mathcal P_{\rm Id}^{\mathcal S}[u\circ\phi^{-1}]=\lambda_j[\phi]{\mathcal J}^{\mathcal S}_{\rm Id}\circ\mathrm{Tr}[u\circ\phi^{-1}]\ \mathrm{for\ some}\ j\in F\right\}$$
		is real analytic.
	\item Let $s\in\{1,\dots,|F|\}$. The function $\Lambda_{F,s}$ from $\mathcal{A}_{\Omega}[F]$ to $\mathbb{R}^N$ defined by
		$$\Lambda_{F,s}[\phi]:=\sum_{j_1<\dots<j_s\in F}\lambda_{j_1}[\phi]\cdots\lambda_{j_s}[\phi]$$
		is real analytic.
	\item Let
	$$\Theta_{\Omega}[F]:=\{\phi\in\mathcal{A}_{\Omega}[F]:\lambda_j[\phi]\ \text{have a common value}\ \lambda_F[\phi]\ \forall j\in F\}.$$
	Then the real analytic functions
	$$\left(\binom{|F|}{1}^{-1}\Lambda_{F,1}\right)^{\frac{1}{1}},\dots ,\left(\binom{|F|}{|F|}^{-1}\Lambda_{F,|F|}\right)^{\frac{1}{|F|}},$$
	of $\mathcal{A}_{\Omega}[F]$ to $\mathbb{R}^N$ coincide on $\Theta_{\Omega}[F]$ with the function which takes $\phi$ to $\lambda_F[\phi]$.
	\end{enumerate} 
	
	\proof
	The proof can be done adapting that of \cite[Theorem 2.2 and Corollary 2.3]{lala2007} (see also \cite{lala2004}).
	\endproof
	\end{thm}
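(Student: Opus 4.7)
The plan is to reduce everything to an abstract statement about real-analytic families of compact selfadjoint operators on a Hilbert space, following the general scheme of \cite{lala2004,lala2007}. The central object is the map $\phi\mapsto W^{\mathcal S}_\phi$ from $\Phi(\Omega)$ into $\mathcal L(H^2(\Omega)/\mathbb R,H^2(\Omega)/\mathbb R)$, whose positive eigenvalues are the reciprocals of the eigenvalues $\lambda_j[\phi]$ of problem (\ref{steklovphi}). The key structural fact to establish first is that this map is real analytic. Since $W^{\mathcal S}_\phi$ is built as the composition $(\pi_\phi^{\sharp,\mathcal S})^{-1}\circ(\mathcal P^{\mathcal S}_\phi)^{-1}\circ\mathcal J^{\mathcal S}_\phi\circ{\rm Tr}\circ\pi_\phi^{\sharp,\mathcal S}$, and the trace is independent of $\phi$, it suffices to check analyticity of each of the remaining factors as operator-valued maps of $\phi$. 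The coefficients appearing in $\mathcal P^{\mathcal S}_\phi$ and $\mathcal J^{\mathcal S}_\phi$ are polynomial expressions in the entries of $D\phi$, $(\nabla\phi)^{-1}$, and $|\det D\phi|$; since $\det D\phi$ is bounded away from zero on $\Phi(\Omega)$, the map $\phi\mapsto(\nabla\phi)^{-1}$ is real analytic from $\Phi(\Omega)$ into $(C^1(\overline\Omega))^{N^2}$, and multiplication is a bilinear continuous (hence analytic) operation. Inversion of the analytic linear homeomorphism $\mathcal P^{\mathcal S}_\phi$ preserves analyticity by the Neumann series, and $\pi^{\sharp,\mathcal S}_\phi$ depends analytically on $\phi$ through the same kind of coefficient.

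With $\phi\mapsto W^{\mathcal S}_\phi$ real analytic and $W^{\mathcal S}_\phi$ compact and selfadjoint for every $\phi$, statement i) follows from standard Kato-type perturbation theory. Namely, if $\phi_0\in\mathcal A_\Omega[F]$, there exists a simple closed curve $\gamma$ in $\mathbb C$ enclosing exactly the cluster $\{\mu_j[\phi_0]:=\lambda_j[\phi_0]^{-1}:j\in F\}$ and no other point of the spectrum; by continuity of the spectrum under norm perturbations of compact operators, the same curve still isolates the corresponding cluster for $\phi$ near $\phi_0$, which proves openness of $\mathcal A_\Omega[F]$. The spectral projection
\[
P_F[\phi]=\frac{1}{2\pi i}\int_\gamma (zI-W^{\mathcal S}_\phi)^{-1}\,dz
\]
is then real analytic in $\phi$ as an $\mathcal L(H^2(\Omega)/\mathbb R,H^2(\Omega)/\mathbb R)$-valued map, since the resolvent is real analytic on its domain. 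Composing with the canonical identifications and with $\pi^{\sharp,\mathcal S}_\phi$ transfers the projection to the characterization given in the statement.

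For ii), I would express the elementary symmetric function $\Lambda_{F,s}[\phi]$ in terms of traces of powers of the finite-rank operator $P_F[\phi]\,W^{\mathcal S}_\phi\,P_F[\phi]$. Indeed, on the $|F|$-dimensional range of $P_F[\phi]$ the operator $W^{\mathcal S}_\phi$ has eigenvalues $\lambda_j[\phi]^{-1}$, $j\in F$, so
\[
\sum_{j_1<\cdots<j_s,\,j_k\in F}\lambda_{j_1}[\phi]\cdots\lambda_{j_s}[\phi]
\ =\ \frac{e_{|F|-s}\bigl(\mu_j[\phi]:j\in F\bigr)}{\prod_{j\in F}\mu_j[\phi]}
\]
where each $e_k$ is a polynomial in the traces ${\rm tr}\bigl((P_F[\phi]W^{\mathcal S}_\phi P_F[\phi])^m\bigr)$ via Newton's identities, and $\prod_{j\in F}\mu_j[\phi]=\det(W^{\mathcal S}_\phi|_{\mathrm{range}\,P_F[\phi]})$ is nonzero on $\mathcal A_\Omega[F]$ (since $\lambda_j[\phi]<\infty$, i.e., $\mu_j[\phi]>0$). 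Both the numerator and the denominator are real analytic compositions of the analytic maps $\phi\mapsto P_F[\phi]$ and $\phi\mapsto W^{\mathcal S}_\phi$ with the continuous multilinear trace and determinant on a fixed finite-dimensional space, hence $\Lambda_{F,s}$ is real analytic. Statement iii) is then a purely combinatorial remark: on $\Theta_\Omega[F]$ all $\lambda_j[\phi]$ for $j\in F$ equal $\lambda_F[\phi]$, so $\Lambda_{F,s}[\phi]=\binom{|F|}{s}\lambda_F[\phi]^s$, and the claimed $s$-th root identity is immediate.

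The main obstacle is the first step, that is, making the real-analytic dependence $\phi\mapsto W^{\mathcal S}_\phi$ fully rigorous: one must carefully verify that the $\phi$-dependent change of measure on $\partial\Omega$ (the weight $|\nu(\nabla\phi)^{-1}||\det D\phi|$) and the pull-back of the Hessian behave well in the $C^2$-topology, and that the normalization operator $\pi^{\sharp,\mathcal S}_\phi$ remains a homeomorphism uniformly for $\phi$ in a neighborhood, so that its inverse is analytic. Once this is secured, items i)--iii) drop out by the Riesz-projection argument above exactly as in \cite[Theorem 2.2 and Corollary 2.3]{lala2007}.
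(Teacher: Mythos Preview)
Your proposal is correct and follows exactly the scheme the paper defers to: the argument of \cite[Theorem 2.2 and Corollary 2.3]{lala2007} and \cite{lala2004}, namely real-analytic dependence of $\phi\mapsto W^{\mathcal S}_\phi$ followed by the Riesz projection/Newton identities machinery. The paper gives no details beyond this citation, so your outline is in fact more explicit than what appears here.
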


In order to compute explicit formulas for the differentials of the functions $\Lambda_{F,s}$, we need the following technical lemma.

	\begin{lem}
	%\label{fondamentale}
	Let $\Omega$ be a bounded domain in $\mathbb{R}^N$ of class $C^1$, and let $\tilde\phi\in\Phi(\Omega)$ be such that $\tilde\phi(\Omega)$ is of class $C^2$.
	Let $u_1,u_2\in H^2(\Omega)$ be such that $v_i=u_i\circ\tilde{\phi}^{(-1)}\in H^4(\tilde{\phi}(\Omega))$ for $i=1,2$ and
	$$\frac{\partial^2v_1}{\partial\nu^2}=\frac{\partial^2v_2}{\partial\nu^2}=0\ \ \ {\rm on}\ \ \ \partial\tilde{\phi}(\Omega).$$
	Then we have
	\begin{multline}
	\label{formulafondamentale}
	d|_{\phi=\tilde{\phi}}\mathcal P^{\mathcal S}_{\phi}[\psi][u_1][u_2]=
	\int_{\partial\tilde{\phi}(\Omega)}(D^2v_1:D^2v_2+\tau\nabla v_1\cdot\nabla v_2)\mu\cdot\nu d\sigma\\
+\int_{\partial\tilde{\phi}(\Omega)}\left(\mathrm{div}_{\partial\tilde{\phi}(\Omega)}(D^2v_1.\nu)\nabla v_2+\mathrm{div}_{\partial\tilde{\phi}(\Omega)}(D^2v_2.\nu)\nabla v_1\right)\cdot\mu d\sigma\\
+\int_{\partial\tilde{\phi}(\Omega)}\left(\frac{\partial\Delta v_1}{\partial\nu}\nabla v_2+\frac{\partial\Delta v_2}{\partial\nu}\nabla v_1\right)\cdot\mu d\sigma
-\tau\int_{\partial\tilde{\phi}(\Omega)}\left(\frac{\partial v_1}{\partial\nu}\nabla v_2+\frac{\partial v_2}{\partial\nu}\nabla v_1\right)\cdot\mu d\sigma\\
-\int_{\tilde{\phi}(\Omega)}\left((\Delta^2v_1-\tau\Delta v_1)\nabla v_2+(\Delta^2v_2-\tau\Delta v_2)\nabla v_1\right)\cdot\mu d\sigma,
	\end{multline}
	for all $\psi\in (C^2(\overline{\Omega}))^N$, where $\mu=\psi\circ\tilde{\phi}^{-1}$. 

	\proof
	We have
	\begin{multline}
	\label{111}
		d|_{\phi=\tilde{\phi}}\mathcal P^{\mathcal S}_{\phi}[\psi][u_1][u_2]\\
			 =\int_{\Omega}(d|_{\phi=\tilde{\phi}}D^2(u_1\circ\phi^{-1})\circ\phi)[\psi]:
				(D^2(u_2\circ\tilde{\phi}^{-1})\circ\tilde{\phi})|\det D\tilde{\phi}|dx\  \\
				+\tau\int_{\Omega}(d|_{\phi=\tilde{\phi}}\nabla(u_1\circ\phi^{-1})\circ\phi)[\psi]\cdot
				(\nabla(u_2\circ\tilde{\phi}^{-1})\circ\tilde{\phi})|\det D\tilde{\phi}|dx\  \\
			 +\int_{\Omega}(D^2(u_1\circ\tilde{\phi}^{-1})\circ\tilde{\phi}):
				(d|_{\phi=\tilde{\phi}}D^2(u_2\circ\phi^{-1})\circ\phi)[\psi]|\det D\tilde{\phi}|dx\\
			 +\tau\int_{\Omega}(\nabla(u_1\circ\tilde{\phi}^{-1})\circ\tilde{\phi})\cdot
				(d|_{\phi=\tilde{\phi}}\nabla(u_2\circ\phi^{-1})\circ\phi)[\psi]|\det D\tilde{\phi}|dx\\
			 +\int_{\Omega}(D^2(u_1\circ\tilde{\phi}^{-1})\circ\tilde{\phi}):
				(D^2(u_2\circ\tilde{\phi}^{-1})\circ\tilde{\phi})d|_{\phi=\tilde{\phi}}|\det D\phi|[\psi]dx\\
				+\tau\int_{\Omega}(\nabla(u_1\circ\tilde{\phi}^{-1})\circ\tilde{\phi})\cdot
				(\nabla(u_2\circ\tilde{\phi}^{-1})\circ\tilde{\phi})d|_{\phi=\tilde{\phi}}|\det D\phi|[\psi]dx,
	\end{multline}
		and we note that the last two summands in (\ref{111}) equals
	$$\int_{\tilde{\phi}(\Omega)}\left(D^2v_1:D^2v_2+\tau\nabla v_1\cdot\nabla v_2\right)\mathrm{div}\mu dy.$$
	(See also Proprosition \ref{vfiprop}). By standard calculus we have (see \cite[formula (2.15)]{buosohinged})
\begin{equation*}
D^2(u\circ\phi^{-1})\circ\phi=(\nabla\phi)^{-t}D^2u(\nabla\phi)^{-1}+
	\left(\sum_{k,l=1}^N\frac{\partial u}{\partial x_k}\frac{\partial\sigma_{k,i}}{\partial x_l}\sigma_{l,j}\right)_{i,j},
\end{equation*}
where $\sigma=(\nabla\phi)^{-1}$. This yields the following formula
\begin{equation}\label{113}
d|_{\phi=\tilde{\phi}}(D^2(u\circ\phi^{-1})\circ\phi)[\psi]\circ\tilde{\phi}^{-1}=
	-D^2v\nabla\mu-\nabla\mu^tD^2v-\sum_{r=1}^N\frac{\partial v}{\partial y_r}D^2\mu_r,
\end{equation}
	where $\mu=\psi\circ\tilde{\phi}^{-1}$ and $v=u\circ\tilde{\phi}^{-1}$. We rewrite formula (\ref{113}) componentwise getting
\begin{multline*}
\left(d|_{\phi=\tilde{\phi}}(D^2(u\circ\phi^{-1})\circ\phi)[\psi]\circ\tilde{\phi}^{-1}\right)_{i,j}\\
=-\sum_{r=1}^N\left(\frac{\partial^2v}{\partial y_i\partial y_r}\frac{\partial\mu_r}{\partial y_j}
	+\frac{\partial^2v}{\partial y_j\partial y_r}\frac{\partial\mu_r}{\partial y_i}
	+\frac{\partial^2\mu_r}{\partial y_i\partial y_j}\frac{\partial v}{\partial y_r}\right).
	%\label{114}
\end{multline*}
	Moreover (see \cite[Lemma 3.26]{lala2004})
	\begin{equation*}
	%\label{112}
	\left(d|_{\phi=\tilde{\phi}}(\nabla(u\circ\phi^{-1})\circ\phi)[\psi]\circ\tilde{\phi}^{-1}\right)_{i}
=-\sum_{r=1}^N\frac{\partial v}{\partial y_r}\frac{\partial\mu_r}{\partial y_i}.
	\end{equation*}
	
	Now we use Einstein notation, dropping all the summation symbols. The first summand of the right hand side
	of (\ref{111}) equals
	\begin{equation}
	\label{115}
	-\int_{\tilde{\phi}(\Omega)}\left(\frac{\partial^2v_1}{\partial y_i\partial y_r}\frac{\partial\mu_r}{\partial y_j}
	+\frac{\partial^2v_1}{\partial y_j\partial y_r}\frac{\partial\mu_r}{\partial y_i}
	+\frac{\partial^2\mu_r}{\partial y_i\partial y_j}\frac{\partial v_1}{\partial y_r}\right)
	\frac{\partial^2v_2}{\partial y_i\partial y_j}dy.
	\end{equation}
	
In order to compute (\ref{115}), integrating by parts, we have
\begin{multline*}
%\label{116}
\int_{\tilde{\phi}(\Omega)}\frac{\partial^2v_1}{\partial y_i\partial y_r}\frac{\partial\mu_r}{\partial y_j}\frac{\partial^2v_2}{\partial y_i\partial y_j}dy
=\int_{\partial\tilde{\phi}(\Omega)}\frac{\partial v_1}{\partial y_i}\frac{\partial \mu_r}{\partial y_j}\nu_r
	\frac{\partial^2v_2}{\partial y_i\partial y_j}d\sigma\\
-\int_{\tilde{\phi}(\Omega)}\frac{\partial v_1}{\partial y_i}\frac{\partial\mathrm{div}\mu}{\partial y_j}\frac{\partial^2v_2}{\partial y_i\partial y_j}dy
-\int_{\tilde{\phi}(\Omega)}\frac{\partial v_1}{\partial y_i}\frac{\partial \mu_r}{\partial y_j}\frac{\partial^3v_2}{\partial y_i\partial y_j\partial y_r}dy\\
=\int_{\partial\tilde{\phi}(\Omega)}\frac{\partial v_1}{\partial y_i}\frac{\partial \mu_r}{\partial y_j}\nu_r
	\frac{\partial^2v_2}{\partial y_i\partial y_j}d\sigma
	-\int_{\tilde{\phi}(\Omega)}\frac{\partial v_1}{\partial y_i}\frac{\partial \mu_r}{\partial y_j}\frac{\partial^3v_2}{\partial y_i\partial y_j\partial y_r}dy\\
-\int_{\partial\tilde{\phi}(\Omega)}\frac{\partial v_1}{\partial y_i}\mathrm{div}\mu\frac{\partial^2v_2}{\partial y_i\partial y_j}\nu_jd\sigma
+\int_{\tilde{\phi}(\Omega)} D^2v_1:D^2v_2\mathrm{div}\mu dy\\
+\int_{\tilde{\phi}(\Omega)}\mathrm{div}\mu\nabla v_1\cdot\nabla\Delta v_2 dy,
\end{multline*}
and 
\begin{multline*}
%\label{117}
\int_{\tilde{\phi}(\Omega)}\frac{\partial v_1}{\partial y_r}\frac{\partial^2\mu_r}{\partial y_i\partial y_j}\frac{\partial^2v_2}{\partial y_i\partial y_j}dy
	=\int_{\partial\tilde{\phi}(\Omega)}\frac{\partial v_1}{\partial y_r}\frac{\partial\mu_r}{\partial y_i}\nu_j
		\frac{\partial^2v_2}{\partial y_i\partial y_j}d\sigma\\
-\int_{\tilde{\phi}(\Omega)}\frac{\partial^2v_1}{\partial y_r\partial y_j}\frac{\partial\mu_r}{\partial y_i}\frac{\partial^2v_2}{\partial y_i\partial y_j}dy
	-\int_{\tilde{\phi}(\Omega)}\frac{\partial v_1}{\partial y_r}\frac{\partial\mu_r}{\partial y_i}\frac{\partial\Delta v_2}{\partial y_i}dy\\
=\int_{\partial\tilde{\phi}(\Omega)}\frac{\partial v_1}{\partial y_r}\frac{\partial\mu_r}{\partial y_i}\nu_j
		\frac{\partial^2v_2}{\partial y_i\partial y_j}d\sigma -\int_{\tilde{\phi}(\Omega)}\frac{\partial v_1}{\partial y_r}\frac{\partial\mu_r}{\partial y_i}\frac{\partial\Delta v_2}{\partial y_i}dy\\
-\int_{\partial\tilde{\phi}(\Omega)}\frac{\partial v_1}{\partial y_j}\frac{\partial\mu_r}{\partial y_i}\nu_r\frac{\partial^2v_2}{\partial y_i\partial y_j}d\sigma
	+\int_{\tilde{\phi}(\Omega)}\frac{\partial v_1}{\partial y_j}\frac{\partial\mathrm{div}\mu}{\partial y_i}\frac{\partial^2v_2}{\partial y_i\partial y_j}dy\\
+\int_{\tilde{\phi}(\Omega)}\frac{\partial v_1}{\partial y_j}\frac{\partial\mu_r}{\partial y_i}\frac{\partial^3v_2}{\partial y_i\partial y_j\partial y_r}dy\\
=\int_{\partial\tilde{\phi}(\Omega)}\frac{\partial v_1}{\partial y_r}\frac{\partial\mu_r}{\partial y_i}\nu_j
		\frac{\partial^2v_2}{\partial y_i\partial y_j}d\sigma -\int_{\tilde{\phi}(\Omega)}\frac{\partial v_1}{\partial y_r}\frac{\partial\mu_r}{\partial y_i}\frac{\partial\Delta v_2}{\partial y_i}dy\\
-\int_{\partial\tilde{\phi}(\Omega)}\frac{\partial v_1}{\partial y_j}\frac{\partial\mu_r}{\partial y_i}\nu_r\frac{\partial^2v_2}{\partial y_i\partial y_j}d\sigma
+\int_{\tilde{\phi}(\Omega)}\frac{\partial v_1}{\partial y_j}\frac{\partial\mu_r}{\partial y_i}\frac{\partial^3v_2}{\partial y_i\partial y_j\partial y_r}dy\\
+\int_{\partial\tilde{\phi}(\Omega)}\frac{\partial v_1}{\partial y_j}\mathrm{div}\mu\frac{\partial^2v_2}{\partial y_i\partial y_j}\nu_id\sigma
	-\int_{\tilde{\phi}(\Omega)} D^2v_1:D^2v_2\mathrm{div}\mu dy\\
-\int_{\tilde{\phi}(\Omega)}\nabla v_1\cdot\nabla\Delta v_2\mathrm{div}\mu dy.
\end{multline*}
We also have
\begin{multline*}
%\label{117bis}
\int_{\tilde{\phi}(\Omega)}\frac{\partial v_1}{\partial y_r}\frac{\partial \mu_r}{\partial y_i}\frac{\partial v_2}{\partial y_i}dy
=\int_{\partial\tilde{\phi}(\Omega)}\frac{\partial v_2}{\partial\nu}\nabla v_1\cdot\mu d\sigma
-\int_{\tilde{\phi}(\Omega)}\Delta v_2\nabla v_1\cdot\mu dy\\
-\int_{\tilde{\phi}(\Omega)}\frac{\partial v_2}{\partial y_i}\frac{\partial^2 v_1}{\partial y_i\partial y_r}\mu_rdy
=\int_{\partial\tilde{\phi}(\Omega)}\frac{\partial v_2}{\partial\nu}\nabla v_1\cdot\mu d\sigma
-\int_{\tilde{\phi}(\Omega)}\Delta v_2\nabla v_1\cdot\mu dy\\
-\int_{\partial\tilde{\phi}(\Omega)}\nabla v_1\cdot\nabla v_2 \mu\cdot\nu d\sigma
+\int_{\tilde{\phi}(\Omega)}\frac{\partial v_1}{\partial y_i}\frac{\partial^2 v_2}{\partial y_i\partial y_r}\mu_rdy
+\int_{\tilde{\phi}(\Omega)}\nabla v_1\cdot\nabla v_2\mathrm{div}\mu dy.
\end{multline*}
It follows that
\begin{multline}
\label{118}
d|_{\phi=\tilde{\phi}}\mathcal P^{\mathcal S}_{\phi}[\psi][u_1][u_2]\\
=-\int_{\tilde{\phi}(\Omega)} D^2v_1:D^2v_2\mathrm{div}\mu dy
-\int_{\partial\tilde{\phi}(\Omega)}\left(\frac{\partial v_1}{\partial y_i}	\frac{\partial^2v_2}{\partial y_i\partial y_j}+
		\frac{\partial v_2}{\partial y_i}	\frac{\partial^2v_1}{\partial y_i\partial y_j}\right)\frac{\partial \mu_r}{\partial y_j}\nu_rd\sigma\\
	+\int_{\tilde{\phi}(\Omega)}\left(\frac{\partial v_1}{\partial y_i}\frac{\partial^3v_2}{\partial y_i\partial y_j\partial y_r}+
		\frac{\partial v_2}{\partial y_i}\frac{\partial^3v_1}{\partial y_i\partial y_j\partial y_r}\right)\frac{\partial \mu_r}{\partial y_j}dy\\
+\int_{\partial\tilde{\phi}(\Omega)}\left(\frac{\partial v_1}{\partial y_i}\frac{\partial^2v_2}{\partial y_i\partial y_j}+
	\frac{\partial v_2}{\partial y_i}\frac{\partial^2v_1}{\partial y_i\partial y_j}\right)\nu_j\mathrm{div}\mu d\sigma\\
-\int_{\tilde{\phi}(\Omega)}\left(\nabla v_1\cdot\nabla\Delta v_2+\nabla v_2\cdot\nabla\Delta v_1\right)\mathrm{div}\mu dy\\
-\int_{\partial\tilde{\phi}(\Omega)}\left(\frac{\partial v_1}{\partial y_r}\frac{\partial^2v_2}{\partial y_i\partial y_j}+
	\frac{\partial v_2}{\partial y_r}\frac{\partial^2v_1}{\partial y_i\partial y_j}\right)\nu_j\frac{\partial\mu_r}{\partial y_i}d\sigma\\
		+\int_{\tilde{\phi}(\Omega)}\left(\frac{\partial v_1}{\partial y_r}\frac{\partial\Delta v_2}{\partial y_i}+
			\frac{\partial v_2}{\partial y_r}\frac{\partial\Delta v_1}{\partial y_i}\right)\frac{\partial\mu_r}{\partial y_i}dy\\
	-\tau\int_{\partial\tilde{\phi}(\Omega)}\left(\frac{\partial v_1}{\partial\nu}\nabla v_2+\frac{\partial v_2}{\partial\nu}\nabla v_1\right)\cdot\mu d\sigma\\
	+\tau\int_{\tilde{\phi}(\Omega)}\left(\Delta v_1\nabla v_2+\Delta v_2\nabla v_1\right)\cdot\mu dy
	+\tau\int_{\partial\tilde{\phi}(\Omega)}\nabla v_1\cdot\nabla v_2 \mu\cdot\nu d\sigma.
\end{multline}

Now we recall that
$$\mathrm{div}\mu=\mathrm{div}_{\partial\tilde{\phi}(\Omega)}\mu+\frac{\partial\mu}{\partial\nu}\cdot\nu\ \ {\rm on}\  \partial\tilde\phi(\Omega),$$
(see also \cite[\S 8.5]{delfour}) and that, since $\nu=\nabla b$, where $b$ is the distance from the boundary defined in an appropriate tubular neighborhood of the boundary, then
$\nabla\nu=(\nabla\nu)^t$ and $\frac{\partial\nu}{\partial\nu}=0$, from which it follows that
$$\nabla_{\partial\tilde{\phi}(\Omega)}\nu=(\nabla_{\partial\tilde{\phi}(\Omega)}\nu)^t\ \ {\rm on}\ \partial\tilde\phi(\Omega).$$
We will use these identities throughout all the following computations.

Using the fact that
$$\frac{\partial^2v_1}{\partial\nu^2}=\frac{\partial^2v_2}{\partial\nu^2}=0\ \ {\rm on}\ \partial\tilde\phi(\Omega),$$
we get that the sixth summand in (\ref{118}) equals
\begin{multline}
%\label{119}
-\int_{\partial\tilde{\phi}(\Omega)}\left(\frac{\partial v_1}{\partial y_r}(D^2v_2.\nu)_{\partial\tilde{\phi}(\Omega)}+\frac{\partial v_2}{\partial y_r}(D^2v_1.\nu)_{\partial\tilde{\phi}(\Omega)}
	\right)\cdot\nabla_{\partial\tilde{\phi}(\Omega)}\mu_rd\sigma\\
-\int_{\partial\tilde{\phi}(\Omega)}\left(\frac{\partial v_1}{\partial y_r}\frac{\partial^2 v_2}{\partial\nu^2}+\frac{\partial v_2}{\partial y_r}\frac{\partial^2 v_1}{\partial\nu^2}
	\right)\frac{\partial\mu_r}{\partial\nu}d\sigma\\
=	\int_{\partial\tilde{\phi}(\Omega)}\left(\nabla_{\partial\tilde{\phi}(\Omega)}\left(\frac{\partial v_1}{\partial y_r}\right)(D^2v_2.\nu)_{\partial\tilde{\phi}(\Omega)}+\nabla_{\partial\tilde{\phi}(\Omega)}\left(\frac{\partial v_2}{\partial y_r}\right)(D^2v_1.\nu)_{\partial\tilde{\phi}(\Omega)}
	\right)\mu_rd\sigma\\
+\int_{\partial\tilde{\phi}(\Omega)}\left(\mathrm{div}_{\partial\tilde{\phi}(\Omega)}(D^2v_1.\nu)_{\partial\tilde{\phi}(\Omega)}\nabla v_2+\mathrm{div}_{\partial\tilde{\phi}(\Omega)}(D^2v_2.\nu)_{\partial\tilde{\phi}(\Omega)}\nabla v_1\right)\cdot\mu d\sigma\\	
=\int_{\partial\tilde{\phi}(\Omega)}\left(\frac{\partial^2 v_1}{\partial y_i\partial y_r}\frac{\partial^2 v_2}{\partial y_i\partial y_j}
	+\frac{\partial^2 v_2}{\partial y_i\partial y_r}\frac{\partial^2 v_1}{\partial y_i\partial y_j}\right)
	\nu_j\mu_rd\sigma\\
+\int_{\partial\tilde{\phi}(\Omega)}\left(\mathrm{div}_{\partial\tilde{\phi}(\Omega)}(D^2v_1.\nu)_{\partial\tilde{\phi}(\Omega)}\nabla v_2+\mathrm{div}_{\partial\tilde{\phi}(\Omega)}(D^2v_2.\nu)_{\partial\tilde{\phi}(\Omega)}\nabla v_1\right)\cdot\mu d\sigma.
\end{multline}
The seventh summand in (\ref{118}) equals
\begin{multline}
%\label{120}
\int_{\partial\tilde{\phi}(\Omega)}\left(\frac{\partial\Delta v_1}{\partial\nu}\nabla v_2+\frac{\partial\Delta v_2}{\partial\nu}\nabla v_1\right)\cdot\mu d\sigma
-\int_{\tilde{\phi}(\Omega)}\left(\Delta^2v_1\nabla v_2+\Delta^2v_2\nabla v_1\right)\cdot\mu d\sigma\\
-\int_{\tilde{\phi}(\Omega)}\left(\frac{\partial^2v_1}{\partial y_i \partial y_r}\frac{\partial\Delta v_2}{\partial y_i}+\frac{\partial^2v_2}{\partial y_i \partial y_r}\frac{\partial\Delta v_1}{\partial y_i}\right)\mu_r dy\\
=\int_{\partial\tilde{\phi}(\Omega)}\left(\frac{\partial\Delta v_1}{\partial\nu}\nabla v_2+\frac{\partial\Delta v_2}{\partial\nu}\nabla v_1\right)\cdot\mu d\sigma
-\int_{\tilde{\phi}(\Omega)}\left(\Delta^2v_1\nabla v_2+\Delta^2v_2\nabla v_1\right)\cdot\mu d\sigma\\
-\int_{\partial\tilde{\phi}(\Omega)}\left(\nabla v_1\cdot\nabla\Delta v_2+\nabla v_2\cdot\nabla\Delta v_1\right)\mu\cdot\nu d\sigma
+\int_{\tilde{\phi}(\Omega)}\left(\frac{\partial v_1}{\partial y_i}\frac{\partial^2\Delta v_2}{\partial y_i\partial y_r}+\frac{\partial v_2}{\partial y_i}\frac{\partial^2\Delta v_1}{\partial y_i\partial y_r}\right)\mu_r dy\\
+\int_{\tilde{\phi}(\Omega)}\left(\nabla v_1\cdot\nabla\Delta v_2+\nabla v_2\cdot\nabla\Delta v_1\right)\mathrm{div}\mu dy.
\end{multline}
The second summand in (\ref{118}) equals
\begin{multline}
%\label{121}
-\int_{\partial\tilde{\phi}(\Omega)}\nabla(\nabla v_1\cdot\nabla v_2)\nabla(\mu_r)\nu_rd\sigma\\
=-\int_{\partial\tilde{\phi}(\Omega)}\nabla_{\partial\tilde{\phi}(\Omega)}(\nabla v_1\cdot\nabla v_2)\nabla_{\partial\tilde{\phi}(\Omega)}(\mu_r)\nu_rd\sigma
-\int_{\partial\tilde{\phi}(\Omega)}\frac{\partial\ }{\partial\nu}(\nabla v_1\cdot\nabla v_2)\frac{\partial\mu_r}{\partial\nu}\nu_rd\sigma.
\end{multline}
The third summand in (\ref{118}) equals
\begin{multline}
\label{122}
\int_{\partial\tilde{\phi}(\Omega)}\left(\frac{\partial v_1}{\partial y_i}\frac{\partial^3v_2}{\partial y_i\partial y_j\partial y_r}+
		\frac{\partial v_2}{\partial y_i}\frac{\partial^3v_1}{\partial y_i\partial y_j\partial y_r}\right)\nu_j \mu_rd\sigma\\
-\int_{\tilde{\phi}(\Omega)}\left(\frac{\partial v_1}{\partial y_i}\frac{\partial^2\Delta v_2}{\partial y_i\partial y_r}+\frac{\partial v_2}{\partial y_i}\frac{\partial^2\Delta v_1}{\partial y_i\partial y_r}\right)\mu_r dy\\
-\int_{\tilde{\phi}(\Omega)}\left(\frac{\partial^2 v_1}{\partial y_i\partial y_j}\frac{\partial^3v_2}{\partial y_i\partial y_j\partial y_r}+
		\frac{\partial^2 v_2}{\partial y_i\partial y_j}\frac{\partial^3v_1}{\partial y_i\partial y_j\partial y_r}\right)\mu_rdy\\
=\int_{\partial\tilde{\phi}(\Omega)}\left(\frac{\partial v_1}{\partial y_i}\frac{\partial^3v_2}{\partial y_i\partial y_j\partial y_r}+
		\frac{\partial v_2}{\partial y_i}\frac{\partial^3v_1}{\partial y_i\partial y_j\partial y_r}\right)\nu_j \mu_rd\sigma\\
-\int_{\tilde{\phi}(\Omega)}\left(\frac{\partial v_1}{\partial y_i}\frac{\partial^2\Delta v_2}{\partial y_i\partial y_r}+\frac{\partial v_2}{\partial y_i}\frac{\partial^2\Delta v_1}{\partial y_i\partial y_r}\right)\mu_r dy\\
-\int_{\partial\tilde{\phi}(\Omega)}D^2v_1:D^2v_2\mu\cdot\nu d\sigma
+\int_{\tilde{\phi}(\Omega)}D^2v_1:D^2v_2\mathrm{div}\mu dy.
\end{multline}
From (\ref{118})-(\ref{122}), it follows that
\begin{multline}
\label{123}
d|_{\phi=\tilde{\phi}}\mathcal P^{\mathcal S}_{\phi}[\psi][u_1][u_2]
=-\int_{\partial\tilde{\phi}(\Omega)}\nabla_{\partial\tilde{\phi}(\Omega)}(\nabla v_1\cdot\nabla v_2)\nabla_{\partial\tilde{\phi}(\Omega)}(\mu_r)\nu_rd\sigma\\
-\int_{\partial\tilde{\phi}(\Omega)}\frac{\partial\ }{\partial\nu}(\nabla v_1\cdot\nabla v_2)\frac{\partial\mu_r}{\partial\nu}\nu_rd\sigma
+\int_{\partial\tilde{\phi}(\Omega)}\frac{\partial\ }{\partial\nu}(\nabla v_1\cdot\nabla v_2)\mathrm{div}\mu d\sigma\\
+\int_{\partial\tilde{\phi}(\Omega)}\left(\mathrm{div}_{\partial\tilde{\phi}(\Omega)}(D^2v_1.\nu)_{\partial\tilde{\phi}(\Omega)}\nabla v_2+\mathrm{div}_{\partial\tilde{\phi}(\Omega)}(D^2v_2.\nu)_{\partial\tilde{\phi}(\Omega)}\nabla v_1\right)\cdot\mu d\sigma\\
+\int_{\partial\tilde{\phi}(\Omega)}\left(\frac{\partial^2 v_1}{\partial y_i\partial y_r}\frac{\partial^2 v_2}{\partial y_i\partial y_j}
	+\frac{\partial^2 v_2}{\partial y_i\partial y_r}\frac{\partial^2 v_1}{\partial y_i\partial y_j}\right)
	\nu_j\mu_rd\sigma\\
+\int_{\partial\tilde{\phi}(\Omega)}\left(\frac{\partial v_1}{\partial y_i}\frac{\partial^3v_2}{\partial y_i\partial y_j\partial y_r}+
		\frac{\partial v_2}{\partial y_i}\frac{\partial^3v_1}{\partial y_i\partial y_j\partial y_r}\right)\nu_j \mu_rd\sigma\\
-\int_{\partial\tilde{\phi}(\Omega)}D^2v_1:D^2v_2\mu\cdot\nu d\sigma
-\int_{\partial\tilde{\phi}(\Omega)}\left(\nabla v_1\cdot\nabla\Delta v_2+\nabla v_2\cdot\nabla\Delta v_1\right)\mu\cdot\nu d\sigma\\
+\int_{\partial\tilde{\phi}(\Omega)}\left(\frac{\partial\Delta v_1}{\partial\nu}\nabla v_2+\frac{\partial\Delta v_2}{\partial\nu}\nabla v_1\right)\cdot\mu d\sigma
-\int_{\tilde{\phi}(\Omega)}\left(\Delta^2v_1\nabla v_2+\Delta^2v_2\nabla v_1\right)\cdot\mu d\sigma\\
	-\tau\int_{\partial\tilde{\phi}(\Omega)}\left(\frac{\partial v_1}{\partial\nu}\nabla v_2+\frac{\partial v_2}{\partial\nu}\nabla v_1\right)\cdot\mu d\sigma\\
	+\tau\int_{\tilde{\phi}(\Omega)}\left(\Delta v_1\nabla v_2+\Delta v_2\nabla v_1\right)\cdot\mu dy
	+\tau\int_{\partial\tilde{\phi}(\Omega)}\nabla v_1\cdot\nabla v_2 \mu\cdot\nu d\sigma\\
=-\int_{\partial\tilde{\phi}(\Omega)}\nabla_{\partial\tilde{\phi}(\Omega)}(\nabla v_1\cdot\nabla v_2)\nabla_{\partial\tilde{\phi}(\Omega)}(\mu_r)\nu_rd\sigma
+\int_{\partial\tilde{\phi}(\Omega)}\frac{\partial\ }{\partial\nu}(\nabla v_1\cdot\nabla v_2)\mathrm{div}_{\partial\tilde{\phi}(\Omega)}\mu d\sigma\\
+\int_{\partial\tilde{\phi}(\Omega)}\left(\mathrm{div}_{\partial\tilde{\phi}(\Omega)}(D^2v_1.\nu)_{\partial\tilde{\phi}(\Omega)}\nabla v_2+\mathrm{div}_{\partial\tilde{\phi}(\Omega)}(D^2v_2.\nu)_{\partial\tilde{\phi}(\Omega)}\nabla v_1\right)\cdot\mu d\sigma\\
+\int_{\partial\tilde{\phi}(\Omega)}\left(\frac{\partial\Delta v_1}{\partial\nu}\nabla v_2+\frac{\partial\Delta v_2}{\partial\nu}\nabla v_1\right)\cdot\mu d\sigma
-\tau\int_{\partial\tilde{\phi}(\Omega)}\left(\frac{\partial v_1}{\partial\nu}\nabla v_2+\frac{\partial v_2}{\partial\nu}\nabla v_1\right)\cdot\mu d\sigma\\
+\int_{\partial\tilde{\phi}(\Omega)}\frac{\partial\ }{\partial\nu}\left(\frac{\partial\ }{\partial y_r}(\nabla v_1\cdot\nabla v_2)\right)\mu_r d\sigma\\
-\int_{\partial\tilde{\phi}(\Omega)}D^2v_1:D^2v_2\mu\cdot\nu d\sigma
-\int_{\partial\tilde{\phi}(\Omega)}(\nabla v_1\cdot\nabla\Delta v_2+\nabla v_2\cdot\nabla\Delta v_1)\mu\cdot\nu d\sigma\\
-\int_{\tilde{\phi}(\Omega)}\left(\Delta^2v_1\nabla v_2+\Delta^2v_2\nabla v_1\right)\cdot\mu d\sigma\\
	+\tau\int_{\tilde{\phi}(\Omega)}\left(\Delta v_1\nabla v_2+\Delta v_2\nabla v_1\right)\cdot\mu dy
	+\tau\int_{\partial\tilde{\phi}(\Omega)}\nabla v_1\cdot\nabla v_2 \mu\cdot\nu d\sigma.
\end{multline}

The first summand on the right hand side of (\ref{123}) equals
\begin{equation*}
%\label{124}
\int_{\partial\tilde{\phi}(\Omega)}\Delta_{\partial\tilde{\phi}(\Omega)}(\nabla v_1\cdot\nabla v_2)\mu\cdot\nu d\sigma
+\int_{\partial\tilde{\phi}(\Omega)}\nabla_{\partial\tilde{\phi}(\Omega)}(\nabla v_1\cdot\nabla v_2)\cdot(\nabla_{\partial\tilde{\phi}(\Omega)}
	\nu_r)\mu_rd\sigma,
\end{equation*}
while the sixth one equals
\begin{multline*}
%\label{125}
\int_{\partial\tilde{\phi}(\Omega)}\frac{\partial^2\ }{\partial\nu^2}(\nabla v_1\cdot\nabla v_2)\mu\cdot\nu d\sigma
+\int_{\partial\tilde{\phi}(\Omega)}\nabla_{\partial\tilde{\phi}(\Omega)}\left(\frac{\partial\ }{\partial\nu}(\nabla v_1\cdot\nabla v_2)\right)\cdot\mu d\sigma\\
-\int_{\partial\tilde{\phi}(\Omega)}\nabla_{\partial\tilde{\phi}(\Omega)}(\nabla v_1\cdot\nabla v_2)\cdot(\nabla_{\partial\tilde{\phi}(\Omega)}
	\nu_r)\mu_rd\sigma.
\end{multline*}
Using the fact that
$$\int_{\partial\tilde{\phi}(\Omega)}\mathrm{div}_{\partial\tilde{\phi}(\Omega)}\left(\frac{\partial\ }{\partial\nu}(\nabla v_1\cdot\nabla v_2)\cdot\mu\right)d\sigma
=\int_{\partial\tilde{\phi}(\Omega)}K\frac{\partial\ }{\partial\nu}(\nabla v_1\cdot\nabla v_2)\mu\cdot\nu d\sigma,$$
where $K$ denotes the mean curvature of $\partial\tilde{\phi}(\Omega)$ (see \cite[\S 8.5]{delfour}), we obtain
\begin{multline*}
%\label{126}
d|_{\phi=\tilde{\phi}}\mathcal P^{\mathcal S}_{\phi}[\psi][u_1][u_2]
=\int_{\partial\tilde{\phi}(\Omega)}\Delta_{\partial\tilde{\phi}(\Omega)}(\nabla v_1\cdot\nabla v_2)\mu\cdot\nu d\sigma\\
+\int_{\partial\tilde{\phi}(\Omega)}K\frac{\partial\ }{\partial\nu}(\nabla v_1\cdot\nabla v_2)\mu\cdot\nu d\sigma
+\int_{\partial\tilde{\phi}(\Omega)}\frac{\partial^2\ }{\partial\nu^2}(\nabla v_1\cdot\nabla v_2)\mu\cdot\nu d\sigma\\
-\int_{\partial\tilde{\phi}(\Omega)}D^2v_1:D^2v_2\mu\cdot\nu d\sigma
-\int_{\partial\tilde{\phi}(\Omega)}(\nabla v_1\cdot\nabla\Delta v_2+\nabla v_2\cdot\nabla\Delta v_1)\mu\cdot\nu d\sigma\\
+\int_{\partial\tilde{\phi}(\Omega)}\left(\mathrm{div}_{\partial\tilde{\phi}(\Omega)}(D^2v_1.\nu)_{\partial\tilde{\phi}(\Omega)}\nabla v_2+\mathrm{div}_{\partial\tilde{\phi}(\Omega)}(D^2v_2.\nu)_{\partial\tilde{\phi}(\Omega)}\nabla v_1\right)\cdot\mu d\sigma\\
+\int_{\partial\tilde{\phi}(\Omega)}\left(\frac{\partial\Delta v_1}{\partial\nu}\nabla v_2+\frac{\partial\Delta v_2}{\partial\nu}\nabla v_1\right)\cdot\mu d\sigma
-\tau\int_{\partial\tilde{\phi}(\Omega)}\left(\frac{\partial v_1}{\partial\nu}\nabla v_2+\frac{\partial v_2}{\partial\nu}\nabla v_1\right)\cdot\mu d\sigma\\
-\int_{\tilde{\phi}(\Omega)}\left(\Delta^2v_1\nabla v_2+\Delta^2v_2\nabla v_1\right)\cdot\mu d\sigma\\
	+\tau\int_{\tilde{\phi}(\Omega)}\left(\Delta v_1\nabla v_2+\Delta v_2\nabla v_1\right)\cdot\mu dy
	+\tau\int_{\partial\tilde{\phi}(\Omega)}\nabla v_1\cdot\nabla v_2 \mu\cdot\nu d\sigma\\
=\int_{\partial\tilde{\phi}(\Omega)}\Delta(\nabla v_1\cdot\nabla v_2)\mu\cdot\nu d\sigma
-\int_{\partial\tilde{\phi}(\Omega)}D^2v_1:D^2v_2\mu\cdot\nu d\sigma\\
-\int_{\partial\tilde{\phi}(\Omega)}(\nabla v_1\cdot\nabla\Delta v_2+\nabla v_2\cdot\nabla\Delta v_1)\mu\cdot\nu d\sigma\\
+\int_{\partial\tilde{\phi}(\Omega)}\left(\mathrm{div}_{\partial\tilde{\phi}(\Omega)}(D^2v_1.\nu)_{\partial\tilde{\phi}(\Omega)}\nabla v_2+\mathrm{div}_{\partial\tilde{\phi}(\Omega)}(D^2v_2.\nu)_{\partial\tilde{\phi}(\Omega)}\nabla v_1\right)\cdot\mu d\sigma\\
+\int_{\partial\tilde{\phi}(\Omega)}\left(\frac{\partial\Delta v_1}{\partial\nu}\nabla v_2+\frac{\partial\Delta v_2}{\partial\nu}\nabla v_1\right)\cdot\mu d\sigma
-\tau\int_{\partial\tilde{\phi}(\Omega)}\left(\frac{\partial v_1}{\partial\nu}\nabla v_2+\frac{\partial v_2}{\partial\nu}\nabla v_1\right)\cdot\mu d\sigma\\
-\int_{\tilde{\phi}(\Omega)}\left((\Delta^2v_1-\tau\Delta v_1)\nabla v_2+(\Delta^2v_2-\tau\Delta v_2)\nabla v_1\right)\cdot\mu d\sigma\\
	+\tau\int_{\partial\tilde{\phi}(\Omega)}\nabla v_1\cdot\nabla v_2 \mu\cdot\nu d\sigma.
\end{multline*}
Using the equality
$$
\Delta(\nabla v_1\cdot\nabla v_2)=\nabla\Delta v_1\cdot\nabla v_2+\nabla v_1\cdot\nabla\Delta v_2+2D^2v_1:D^2v_2
$$
we finally get formula (\ref{formulafondamentale}).
\endproof
\end{lem}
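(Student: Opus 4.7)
The plan is to differentiate the three factor groups in the bilinear form (\ref{deltaphi}) via the product rule at $\phi=\tilde\phi$, insert the known derivative formulas for $D^{2}(u\circ\phi^{-1})\circ\phi$, $\nabla(u\circ\phi^{-1})\circ\phi$ and $|\det D\phi|$, and then push the resulting bulk expressions back to the boundary by repeated integration by parts on $\tilde\phi(\Omega)$, simplifying using the hypothesis $\partial^{2}v_{i}/\partial\nu^{2}=0$. The final cosmetic step will be to replace ordinary gradients/Laplacians by the structural combination $\Delta(\nabla v_{1}\cdot\nabla v_{2})=\nabla\Delta v_{1}\cdot\nabla v_{2}+\nabla v_{1}\cdot\nabla\Delta v_{2}+2\,D^{2}v_{1}:D^{2}v_{2}$, which is what produces the term $\Delta^{2}v_{i}-\tau\Delta v_{i}$ inside the bulk integral of the statement.

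First I would write out $d|_{\phi=\tilde\phi}\mathcal P^{\mathcal S}_{\phi}[\psi][u_{1}][u_{2}]$ as the six-term sum obtained from Leibniz: two terms from differentiating $D^{2}(u_{i}\circ\phi^{-1})\circ\phi$, two from $\nabla(u_{i}\circ\phi^{-1})\circ\phi$, and two from $|\det D\phi|$. Setting $\mu=\psi\circ\tilde\phi^{-1}$ and $v_{i}=u_{i}\circ\tilde\phi^{-1}$, I would invoke the componentwise formula
\[
\bigl(d|_{\phi=\tilde\phi}(D^{2}(u\circ\phi^{-1})\circ\phi)[\psi]\circ\tilde\phi^{-1}\bigr)_{ij}=-\sum_{r}\Bigl(\tfrac{\partial^{2}v}{\partial y_{i}\partial y_{r}}\tfrac{\partial\mu_{r}}{\partial y_{j}}+\tfrac{\partial^{2}v}{\partial y_{j}\partial y_{r}}\tfrac{\partial\mu_{r}}{\partial y_{i}}+\tfrac{\partial^{2}\mu_{r}}{\partial y_{i}\partial y_{j}}\tfrac{\partial v}{\partial y_{r}}\Bigr)
\]
(from \cite{buosohinged}), the analogous formula for the gradient (from \cite{lala2004}), and $d|_{\phi=\tilde\phi}|\det D\phi|[\psi]\circ\tilde\phi^{-1}={\rm div}\,\mu$. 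After changing variables back to $\tilde\phi(\Omega)$ this yields a sum of bulk integrals over $\tilde\phi(\Omega)$ containing first and second derivatives of $\mu$ paired with up to second derivatives of the $v_{i}$.

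The bulk of the work is integration by parts to eliminate the derivatives falling on $\mu$. Each term of type $\int\tfrac{\partial^{2}v_{1}}{\partial y_{i}\partial y_{r}}\tfrac{\partial\mu_{r}}{\partial y_{j}}\tfrac{\partial^{2}v_{2}}{\partial y_{i}\partial y_{j}}\,dy$ is integrated by parts once in $y_{r}$ and once in $y_{j}$ to transfer all derivatives onto the $v_{i}$, thereby generating boundary contributions and volume terms containing $\nabla\Delta v_{i}$ or $D^{2}v_{i}$. The terms containing $D^{2}\mu$ are integrated by parts twice, and the gradient terms once. Collecting everything one obtains an expression like (\ref{118}), in which all derivatives of $\mu$ of order $\ge 1$ appear only on the boundary.

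The final consolidation is the characteristic Steklov-boundary computation: I decompose each $\nabla\mu$ and each ${\rm div}\,\mu$ on $\partial\tilde\phi(\Omega)$ into tangential and normal parts via $\nabla\mu=\nabla_{\partial\tilde\phi(\Omega)}\mu+\nu\,\partial_{\nu}\mu$ and ${\rm div}\,\mu={\rm div}_{\partial\tilde\phi(\Omega)}\mu+\partial_{\nu}\mu\cdot\nu$, and use $\nabla_{\partial\tilde\phi(\Omega)}\nu=(\nabla_{\partial\tilde\phi(\Omega)}\nu)^{t}$ together with $\partial_{\nu}\nu=0$. Crucially, the hypothesis $\partial^{2}v_{i}/\partial\nu^{2}=0$ on $\partial\tilde\phi(\Omega)$ kills exactly the boundary terms that involve $\partial_{\nu}\mu_{r}\nu_{r}$ paired with $(D^{2}v_{j}.\nu)\cdot\nu$, so that after applying $\mathrm{div}_{\partial\tilde\phi(\Omega)}$-integration by parts on the tangential parts (invoking $\int_{\partial\tilde\phi(\Omega)}{\rm div}_{\partial\tilde\phi(\Omega)}(f\mu)d\sigma=\int_{\partial\tilde\phi(\Omega)}Kf\,\mu\cdot\nu\,d\sigma$ from \cite{delfour}), every surviving non-tangential boundary term can be rewritten in terms of $(D^{2}v.\nu)_{\partial\tilde\phi(\Omega)}$, ${\rm div}_{\partial\tilde\phi(\Omega)}$, $\partial\Delta v/\partial\nu$, and the mean curvature $K$. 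Finally substituting the Laplacian identity for $\Delta(\nabla v_{1}\cdot\nabla v_{2})$ absorbs the residual terms $\nabla v_{i}\cdot\nabla\Delta v_{j}$ and $D^{2}v_{1}:D^{2}v_{2}$ on the boundary into a single bulk integral of $(\Delta^{2}v_{i}-\tau\Delta v_{i})\nabla v_{j}\cdot\mu$, which is exactly (\ref{formulafondamentale}). The hardest part will be the sign and index bookkeeping in the second-derivative integration by parts, since several boundary contributions cancel only after the symmetric pair $(v_{1},v_{2})\leftrightarrow(v_{2},v_{1})$ is added and the tangential decomposition has been performed; this is where the $C^{2}$-regularity of $\tilde\phi(\Omega)$ enters, via the existence of a smooth extension of $\nu$ and of $K$.
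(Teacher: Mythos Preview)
Your proposal is correct and follows essentially the same route as the paper's proof: product-rule expansion into six summands, insertion of the same derivative formulas for $D^{2}(u\circ\phi^{-1})\circ\phi$, $\nabla(u\circ\phi^{-1})\circ\phi$, and $|\det D\phi|$ (with the same citations), repeated integration by parts to move derivatives off $\mu$, tangential/normal splitting on $\partial\tilde\phi(\Omega)$ using $\partial^{2}v_{i}/\partial\nu^{2}=0$, the tangential divergence identity with mean curvature $K$, and the final collapse via $\Delta(\nabla v_{1}\cdot\nabla v_{2})=\nabla\Delta v_{1}\cdot\nabla v_{2}+\nabla v_{1}\cdot\nabla\Delta v_{2}+2D^{2}v_{1}:D^{2}v_{2}$. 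One small wording slip: that last identity is used to simplify \emph{boundary} terms (it turns $\Delta_{\partial\tilde\phi(\Omega)}+K\partial_{\nu}+\partial_{\nu}^{2}$ applied to $\nabla v_{1}\cdot\nabla v_{2}$ into a full $\Delta$ and then expands it), while the bulk integral $\int_{\tilde\phi(\Omega)}(\Delta^{2}v_{i}-\tau\Delta v_{i})\nabla v_{j}\cdot\mu$ has already been produced by the earlier integrations by parts; but this does not affect the correctness of the plan.
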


Now we can compute Hadamard-type formulas for the eigenvalues of problem (\ref{steklovphi}).
\begin{thm}
	\label{duesette}
		Let $\Omega$ be a bounded domain in $\mathbb{R}^N$ of class $C^1$. Let
		$F$ be a finite non-empty subset of $\mathbb{N}$. Let $\tilde{\phi}\in\Theta_{\Omega}[F]$ be such that $\partial\tilde{\phi}(\Omega)
		\in C^4$.
		Let $v_1,\dots,v_{|F|}$ be an orthonormal basis of the eigenspace associated with the eigenvalue $\lambda_F[\tilde{\phi}]$ of
		problem (\ref{steklovphi}) in $L^2(\partial\tilde{\phi}(\Omega))$. Then
		\begin{multline*}
		%\label{deriv}
			d|_{\phi=\tilde{\phi}}(\Lambda_{F,s})[\psi]=-\lambda_F^{s-1}[\tilde{\phi}]\binom{|F|-1}{s-1}
\sum_{l=1}^{|F|}\int_{\partial\tilde{\phi}(\Omega)}\Big(\lambda_FKv_l^2\\
+\lambda_F\frac{\partial(v_l^2)}{\partial\nu}-\tau|\nabla v_l|^2-|D^2v_l|^2\Big)\mu\cdot\nu d\sigma,
		\end{multline*}
		for all $\psi\in(C^2(\overline{\Omega}))^N$, where $\mu=\psi\circ\tilde{\phi}^{(-1)}$, and $K$ denotes the mean curvature of $\partial\tilde{\phi}(\Omega)$.
	
	\proof
	First of all we note that $v_1,...,v_{|F|}\in H^4(\tilde\phi(\Omega))$ (see e.g., \cite[\S 2.5]{ggs}). We set $u_l=v_l\circ\tilde{\phi}$ for $l=1,\dots,|F|$. For $|F|>1$ (case $|F|=1$ is similar), $s\leq|F|$, we have
	\begin{equation}
	\label{formula}
	d|_{\phi=\tilde{\phi}}(\Lambda_{F,s})[\psi]=
	-\lambda_F^{s+2}[\tilde{\phi}]\binom{|F|-1}{s-1}\sum_{l=1}^{|F|}\mathcal P^{\mathcal S}_{\tilde{\phi}}\left[
		d|_{\phi=\tilde{\phi}}W^{\mathcal S}_{\phi}[\psi][p(u_l)]\right]\left[p(u_l)\right].
	\end{equation}
	We refer to \cite[Theorem 3.38]{lala2004} for a proof of formula (\ref{formula}).
	
	By standard calculus in normed spaces we have:
	\begin{multline*}
			\mathcal P^{\mathcal S}_{\tilde{\phi}}\left[\mathrm{d}|_{\phi=\tilde{\phi}}\left((\pi_{\phi}^{\sharp,\mathcal S})^{-1}\circ\left(
				\mathcal P^{\mathcal S}_{\phi}\right)^{-1}\circ{\mathcal J}^{\mathcal S}_{\phi}\circ\mathrm{Tr}\circ\pi_{\phi}^{\sharp,\mathcal S}\right)[\psi][p(u_l)]\right]\left[p(u_l)\right]  \\ 
			= \mathcal P^{\mathcal S}_{\tilde{\phi}}\left[(\pi_{\tilde{\phi}}^{\sharp,\mathcal S})^{-1}\circ\left(\mathcal P^{\mathcal S}_{\tilde{\phi}}\right)^{-1}\circ
							\mathrm{d}|_{\phi=\tilde{\phi}}\left({\mathcal J}^{\mathcal S}_{\phi}\circ\mathrm{Tr}\circ\pi_{\phi}^{\sharp,\mathcal S}\right)[\psi][p(u_l)]\right]\left[p(u_l)\right]\\
			 						+\mathcal P^{\mathcal S}_{\tilde{\phi}}\left[\mathrm{d}|_{\phi=\tilde{\phi}}\left((\pi_{\phi}^{\sharp,\mathcal S})^{-1}\circ\left(
				\mathcal P^{\mathcal S}_{\phi}\right)^{-1}\right)[\psi]\circ{\mathcal J}^{\mathcal S}_{\tilde{\phi}}\circ\mathrm{Tr}\circ\pi_{\tilde{\phi}}^{\sharp,\mathcal S}[p(u_l)]\right]\left[p(u_l)\right].
	\end{multline*}
	
	Now note that:
	\begin{multline*}
	%\label{jaybordo}
		\mathcal P^{\mathcal S}_{\tilde{\phi}}\left[	(\pi_{\tilde{\phi}}^{\sharp,\mathcal S})^{-1}\circ\left(\mathcal P^{\mathcal S}_{\tilde{\phi}}\right)^{-1}\circ
							\mathrm{d}|_{\phi=\tilde{\phi}}\left({\mathcal J}^{\mathcal S}_{\phi}\circ\mathrm{Tr}\circ\pi_{\phi}^{\sharp,\mathcal S}\right)[\psi][p(u_l)]\right]\left[p(u_l)\right]\\
			=\int_{\partial\tilde{\phi}(\Omega)}\left(Kv_l^2+\frac{\partial(v_l^2)}{\partial\nu}\right)\mu\cdot\nu d\sigma
			-\int_{\partial\tilde{\phi}(\Omega)}\nabla(v_l^2)\cdot\mu d\sigma,
	\end{multline*}
	(see also \cite[Lemma 3.3]{lambertisteklov}) and
	\begin{multline*}
	%\label{second}
		\mathcal P^{\mathcal S}_{\tilde{\phi}}\left[\mathrm{d}|_{\phi=\tilde{\phi}}\left((\pi_{\phi}^{\sharp,\mathcal S})^{-1}\circ\left(
				\mathcal P^{\mathcal S}_{\phi}\right)^{-1}\right)[\psi]\circ{\mathcal J}^{\mathcal S}_{\tilde{\phi}}\circ\mathrm{Tr}\circ\pi_{\tilde{\phi}}^{\sharp,\mathcal S}[p(u_l)]\right]\left[p(u_l)\right]\\
				=-\lambda_F^{-1}d|_{\phi=\tilde{\phi}}\left(\mathcal P^{\mathcal S}_{\phi}\circ\pi^{\mathcal S}_{\phi}\right)[\psi][u_l][\pi^{\mathcal S}_{\tilde{\phi}}(u_l)].
	\end{multline*}
	(We refer to \cite[Lemma 2.4]{lala2007} for more explicit computations). Using formula (\ref{formulafondamentale}) we obtain
	\begin{multline*}
		\mathcal P^{\mathcal S}_{\tilde{\phi}}\left[\mathrm{d}|_{\phi=\tilde{\phi}}\left((\pi_{\phi}^{\sharp,\mathcal S})^{-1}\circ\left(
				\mathcal P^{\mathcal S}_{\phi}\right)^{-1}\right)[\psi]\circ{\mathcal J}^{\mathcal S}_{\tilde{\phi}}\circ\mathrm{Tr}\circ\pi_{\tilde{\phi}}^{\sharp,\mathcal S}[p(u_l)]\right]\left[p(u_l)\right]\\
		=-\lambda_F^{-1}\int_{\partial\tilde{\phi}(\Omega)}\left(|D^2v_l|^2+\tau|\nabla v_l|^2\right)\mu\cdot\nu d\sigma
+\int_{\partial\tilde{\phi}(\Omega)}\nabla(v_l^2)\cdot\mu d\sigma.
\end{multline*}
This concludes the proof.
	\endproof
\end{thm}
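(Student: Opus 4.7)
The plan is to build on the two ingredients the lemma and formula (\ref{formula}) already give us. By the reformulation $\lambda^{-1}u=W^{\mathcal S}_{\phi}u$, formula (\ref{formula}) reduces the computation of $d|_{\phi=\tilde\phi}\Lambda_{F,s}[\psi]$ to evaluating, for each eigenfunction $u_l=v_l\circ\tilde\phi$, the scalar
\[
\mathcal P^{\mathcal S}_{\tilde\phi}\bigl[\,d|_{\phi=\tilde\phi}W^{\mathcal S}_{\phi}[\psi][p(u_l)]\,\bigr][p(u_l)].
\]
So I would first unfold $W^{\mathcal S}_{\phi}=(\pi_{\phi}^{\sharp,\mathcal S})^{-1}\circ(\mathcal P^{\mathcal S}_{\phi})^{-1}\circ{\mathcal J}^{\mathcal S}_{\phi}\circ\mathrm{Tr}\circ\pi_{\phi}^{\sharp,\mathcal S}$ and apply the Leibniz rule. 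Since $\pi^{\sharp,\mathcal S}_{\tilde\phi}$ differs from the identity only by a constant (which is annihilated by $\mathcal P^{\mathcal S}_{\tilde\phi}$ on $H^2(\Omega)/\mathbb{R}$), only two genuine contributions survive: one where the derivative hits ${\mathcal J}^{\mathcal S}_{\phi}\circ\mathrm{Tr}\circ\pi_{\phi}^{\sharp,\mathcal S}$ and one where it hits $(\pi_{\phi}^{\sharp,\mathcal S})^{-1}\circ(\mathcal P^{\mathcal S}_{\phi})^{-1}$.

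For the first piece I would invoke the standard shape-derivative formula for surface functionals (exactly as used in \cite[Lemma 3.3]{lambertisteklov}) applied to the bilinear form $(u,v)\mapsto \int_{\partial\Omega} uv\,|\nu(\nabla\phi)^{-1}||\det D\phi|\,d\sigma$; this yields the contribution
\[
\int_{\partial\tilde\phi(\Omega)}\Bigl(Kv_l^2+\tfrac{\partial(v_l^2)}{\partial\nu}\Bigr)\mu\cdot\nu\,d\sigma
-\int_{\partial\tilde\phi(\Omega)}\nabla(v_l^2)\cdot\mu\,d\sigma.
\]
For the second piece, differentiating the identity $(\mathcal P^{\mathcal S}_\phi)^{-1}\circ\mathcal P^{\mathcal S}_\phi=\mathrm{id}$ and using that $\mathcal P^{\mathcal S}_{\tilde\phi}[p(u_l)][p(u_l)]=\lambda_F^{-1}\cdot\lambda_F\int v_l^2=\lambda_F^{-1}\cdot(\text{something in the trace side})$, one rewrites the term as $-\lambda_F^{-1}\,d|_{\phi=\tilde\phi}\mathcal P^{\mathcal S}_\phi[\psi][u_l][\pi^{\mathcal S}_{\tilde\phi}(u_l)]$, exactly as in \cite[Lemma 2.4]{lala2007}.

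At this point I would plug the explicit formula (\ref{formulafondamentale}) from the lemma into the term above with $u_1=u_l$ and $u_2=\pi^{\mathcal S}_{\tilde\phi}(u_l)$. Since $\pi^{\mathcal S}_{\tilde\phi}(u_l)$ differs from $u_l$ only by an additive constant, it has the same derivatives, so the formula applies as if both slots were $v_l$. The bulk integrals drop out because $\Delta^{2}v_l-\tau\Delta v_l=0$ in $\tilde\phi(\Omega)$, and the boundary integrals collapse upon invoking the Steklov condition
\[
\tau\tfrac{\partial v_l}{\partial\nu}-\mathrm{div}_{\partial\tilde\phi(\Omega)}(D^2v_l.\nu)-\tfrac{\partial\Delta v_l}{\partial\nu}=\lambda_F v_l
\qquad\text{on }\partial\tilde\phi(\Omega),
\]
which converts the three mixed boundary terms into $-2\lambda_F\int_{\partial\tilde\phi(\Omega)} v_l\,\nabla v_l\cdot\mu\,d\sigma=-\lambda_F\int_{\partial\tilde\phi(\Omega)}\nabla(v_l^2)\cdot\mu\,d\sigma$. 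What remains is the clean piece $\int_{\partial\tilde\phi(\Omega)}(|D^2v_l|^2+\tau|\nabla v_l|^2)\mu\cdot\nu\,d\sigma$.

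Finally I would add the two contributions. The terms $\int\nabla(v_l^2)\cdot\mu$ coming from the two pieces cancel, and one is left precisely with
\[
\lambda_F^{-3}\int_{\partial\tilde\phi(\Omega)}\Bigl(\lambda_F Kv_l^2+\lambda_F\tfrac{\partial(v_l^2)}{\partial\nu}-\tau|\nabla v_l|^2-|D^2v_l|^2\Bigr)\mu\cdot\nu\,d\sigma
\]
for each $l$. Multiplying by the prefactor $-\lambda_F^{s+2}\binom{|F|-1}{s-1}$ from (\ref{formula}) and summing over $l$ produces the stated formula. The main obstacle is purely bookkeeping: carefully identifying which of the several Fr\'echet derivatives of the composition give nonzero traces on eigenvectors (so that no orthogonality-to-constants issue is overlooked) and making sure the cancellation of the $\nabla(v_l^2)\cdot\mu$ boundary integrals, which is the key simplification, is effected by exactly the Steklov boundary condition on the $v_l$'s; the $C^4$ regularity of $\partial\tilde\phi(\Omega)$ is used precisely to ensure $v_l\in H^4(\tilde\phi(\Omega))$ so that the lemma applies.
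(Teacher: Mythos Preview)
Your proposal is correct and follows essentially the same approach as the paper's proof: you invoke formula (\ref{formula}), split $d|_{\phi=\tilde\phi}W^{\mathcal S}_\phi$ via the Leibniz rule into the same two pieces, evaluate the first via the surface-functional shape derivative (as in \cite[Lemma~3.3]{lambertisteklov}) and the second via the lemma (formula (\ref{formulafondamentale})), and then observe the cancellation of the $\int_{\partial\tilde\phi(\Omega)}\nabla(v_l^2)\cdot\mu\,d\sigma$ terms. You are in fact slightly more explicit than the paper in spelling out how the Steklov boundary condition and the interior equation $\Delta^2 v_l-\tau\Delta v_l=0$ are used to collapse the output of the lemma to $\int_{\partial\tilde\phi(\Omega)}(|D^2v_l|^2+\tau|\nabla v_l|^2)\mu\cdot\nu\,d\sigma-\lambda_F\int_{\partial\tilde\phi(\Omega)}\nabla(v_l^2)\cdot\mu\,d\sigma$; the paper simply states the end result.
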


Now we turn our attention to extremum problems of the type
$$
\min_{\mathcal{V}(\phi)=\mathrm{const.}}\Lambda_{F,s}[\phi]{\rm \ or\ }\max_{\mathcal{V}(\phi)=\mathrm{const.}}\Lambda_{F,s}[\phi],
$$
where $\mathcal{V}(\phi)$ denotes the measure of $\phi(\Omega)$, i.e.,
	\begin{equation}
	\label{vfi}
	\mathcal{V}(\phi):=\int_{\phi(\Omega)}dx=\int_{\Omega}|\det D\phi|dx.
	\end{equation}
In particular, all $\phi$'s realizing the extremum are critical points under measure constraint, i.e., ${\rm Ker}\,d\mathcal V(\phi)\subseteq{\rm Ker}\,d\Lambda_{F,s}[\phi]$.
	We have the following result (see \cite[Proposition 2.10]{lalacri}).
	
	\begin{prop}
	\label{vfiprop}
	Let $\Omega$ be a bounded domain in $\mathbb{R}^N$ of class $C^1$. Then the following statements hold.
	\begin{enumerate}[i)]
		\item The map $\mathcal{V}$ from $\Phi(\Omega)$ to $\mathbb{R}$ defined in (\ref{vfi}) is real analytic. Moreover,
			the differential of $\mathcal{V}$ at $\tilde{\phi}\in\Phi(\Omega)$ is given by the formula
			\begin{equation*}
			d|_{\phi=\tilde{\phi}}\mathcal{V}(\phi)[\psi]=\int_{\tilde{\phi}(\Omega)}\mathrm{div}(\psi\circ\tilde{\phi}^{-1})dy=
			\int_{\partial\tilde{\phi}(\Omega)}(\psi\circ\tilde{\phi}^{-1})\cdot\nu d\sigma.
			\end{equation*}
		\item For $\mathcal{V}_0\in]0,+\infty[$, let
			$$V(\mathcal{V}_0):=\{\phi\in\Phi(\Omega):\mathcal{V}(\phi)=\mathcal{V}_0\}.$$
			If $V(\mathcal{V}_0)\neq\emptyset$, then $V(\mathcal{V}_0)$ is a real analytic manifold of $(C^2(\overline{\Omega}))^N$ of codimension $1$.
	\end{enumerate}
	\end{prop}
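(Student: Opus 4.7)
The plan is to verify (i) by a composition-of-analytic-maps argument combined with Jacobi's formula, and to deduce (ii) from the implicit function theorem in Banach spaces after checking that $d\mathcal{V}$ never vanishes on $V(\mathcal{V}_0)$.

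For the analyticity in (i), I would observe that $\phi\mapsto D\phi$ is linear and continuous from $(C^2(\overline{\Omega}))^N$ to $(C^1(\overline{\Omega}))^{N\times N}$, and that $A\mapsto\det A$ is a polynomial in the entries, hence real analytic as a map to $C^1(\overline{\Omega})$. By definition of $\Phi(\Omega)$ the determinant $\det D\phi$ is nowhere zero, so by continuity and connectedness of $\overline{\Omega}$ it has constant sign, and this sign is locally constant as $\phi$ varies in $\Phi(\Omega)$. Hence on a neighborhood of every $\tilde{\phi}\in\Phi(\Omega)$ one has $|\det D\phi|=\pm\det D\phi$, which is real analytic in $\phi$, and composing with the continuous linear functional $f\mapsto\int_\Omega f\,dx$ gives the real analyticity of $\mathcal{V}$. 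For the differential, Jacobi's formula gives
\[
d|_{\phi=\tilde{\phi}}|\det D\phi|[\psi]=|\det D\tilde{\phi}|\,\operatorname{tr}\bigl((D\tilde{\phi})^{-1}D\psi\bigr),
\]
and differentiating the identity $\psi=(\psi\circ\tilde{\phi}^{-1})\circ\tilde{\phi}$ via the chain rule identifies this trace with $\operatorname{div}(\psi\circ\tilde{\phi}^{-1})\circ\tilde{\phi}$. Integrating over $\Omega$ and performing the change of variables $y=\tilde{\phi}(x)$ yields the first equality in (i), while the divergence theorem on the bounded $C^1$ domain $\tilde{\phi}(\Omega)$ supplies the second.

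For (ii) I would invoke the standard implicit function theorem: the level set $V(\mathcal{V}_0)=\mathcal{V}^{-1}(\mathcal{V}_0)$ of a real analytic $\mathbb{R}$-valued map is a real analytic submanifold of codimension one of $(C^2(\overline{\Omega}))^N$, provided $\mathcal{V}$ is a submersion at every point of this set. To verify the nonvanishing of $d\mathcal{V}$ at $\tilde{\phi}\in V(\mathcal{V}_0)$, it suffices to test the formula from (i) against $\psi=\tilde{\phi}$, which makes $\mu=\mathrm{id}$ and gives
\[
d|_{\phi=\tilde{\phi}}\mathcal{V}(\phi)[\tilde{\phi}]=\int_{\partial\tilde{\phi}(\Omega)}y\cdot\nu(y)\,d\sigma=N\,\mathcal{V}_0>0.
\]
The only mildly delicate point is controlling the sign of $\det D\phi$ in order to strip the absolute value and expose the polynomial structure needed for analyticity; this is handled by the locally-constant-sign observation above. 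The remaining computations are routine applications of the chain rule and of the divergence theorem, and the submersion check via $\psi=\tilde{\phi}$ is immediate.
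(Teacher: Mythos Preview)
Your argument is correct and complete. The paper does not actually prove this proposition; it merely records it with the reference ``see \cite[Proposition 2.10]{lalacri}'', so there is no in-paper proof to compare against. Your approach---analyticity via the locally constant sign of $\det D\phi$ and polynomiality of the determinant, the differential via Jacobi's formula plus the chain rule and divergence theorem, and the codimension-one manifold structure via the submersion check $d\mathcal{V}(\tilde\phi)[\tilde\phi]=N\mathcal{V}_0>0$---is exactly the standard route and matches what one finds in the cited reference.
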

	
	Using Lagrange Multipliers Theorem, it is easy to prove the following
	
	\begin{thm}
	\label{moltiplicatori}
	Let $\Omega$ be a bounded domain in $\mathbb{R}^N$ of class $C^1$. Let $F$ be a non-empty finite
	subset of $\mathbb{N}$. Let $\mathcal{V}_0\in]0,+\infty[$. Let $\tilde{\phi}\in V(\mathcal{V}_0)$ be such that
	$\partial\tilde{\phi}(\Omega)\in C^4$ and $\lambda_j[\tilde{\phi}]$ have a common value $\lambda_F[\tilde{\phi}]$ for all $j\in F$ and
	$\lambda_l[\tilde{\phi}]\neq\lambda_F[\tilde{\phi}]$
	for all $l\in\mathbb{N}\setminus F$. For $s=1,\dots, |F|$, the function $\tilde{\phi}$ is a critical point for $\Lambda_{F,s}$ on
	$V(\mathcal{V}_0)$ if and only if there exists an orthonormal basis $v_1,\dots,v_{|F|}$ of the eigenspace corresponding to the eigenvalue
	$\lambda_F[\tilde{\phi}]$ of problem (\ref{steklovphi}) in $L^2(\partial\tilde{\phi}(\Omega))$, and a constant $c\in\mathbb{R}$ such that
	\begin{equation}
	\label{lacondizione}
	\sum_{l=1}^{|F|}\left(\lambda_F[\tilde{\phi}]\left(Kv_l^2+\frac{\partial \left(v_l^2\right)}{\partial\nu}\right)-\tau|\nabla v_l|^2-|D^2v_l|^2\right)=c,
	\mathrm{\ a.e.\ on\ }\partial\tilde{\phi}(\Omega).
	\end{equation}
	\end{thm}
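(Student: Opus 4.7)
The strategy is to combine the Lagrange Multipliers Theorem with the two Hadamard-type formulas already at our disposal. By Proposition \ref{vfiprop} ii), $V(\mathcal V_0)$ is a codimension-one real analytic submanifold of $(C^2(\overline\Omega))^N$; hence $\tilde\phi$ is a critical point of $\Lambda_{F,s}$ constrained to $V(\mathcal V_0)$ if and only if there exists $c_0\in\mathbb R$ with
\begin{equation*}
d|_{\phi=\tilde\phi}\Lambda_{F,s}[\psi]=c_0\,d|_{\phi=\tilde\phi}\mathcal V(\phi)[\psi],\qquad\forall\,\psi\in(C^2(\overline\Omega))^N.
\end{equation*}

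I would then substitute the explicit formula of Theorem \ref{duesette} on the left and that of Proposition \ref{vfiprop} i) on the right. The structural point that makes everything work is that, with $\mu=\psi\circ\tilde\phi^{-1}$, both differentials are boundary integrals depending only on $\mu\cdot\nu$ along $\partial\tilde\phi(\Omega)$. Denoting by $G$ the left-hand side of (\ref{lacondizione}), the Lagrange identity therefore rewrites as
\begin{equation*}
\int_{\partial\tilde\phi(\Omega)}G\,(\mu\cdot\nu)\,d\sigma=c_1\int_{\partial\tilde\phi(\Omega)}\mu\cdot\nu\,d\sigma,
\end{equation*}
where $c_1$ differs from $c_0$ by the prefactor $-\lambda_F^{s-1}[\tilde\phi]\binom{|F|-1}{s-1}$ appearing in Theorem \ref{duesette}; the only degenerate case is $\lambda_F=0$ with $s\geq 2$, which by Theorem \ref{comsa2} cannot actually occur (since $\lambda=0$ is simple), and is in any event trivial because then $d\Lambda_{F,s}\equiv 0$ and (\ref{lacondizione}) holds with $c=0$.

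To pass from this integral identity to the pointwise condition (\ref{lacondizione}), I would exploit the freedom in $\psi$: given any $h\in C^2(\partial\tilde\phi(\Omega))$, I set $\psi=(\tilde h\,\nu)\circ\tilde\phi$ for a $C^2$ extension $\tilde h$ of $h$ into $\tilde\phi(\Omega)$, which is admissible since $\tilde\phi\in C^2$ and $\nu\in C^3$ thanks to $\partial\tilde\phi(\Omega)\in C^4$; this produces arbitrary $h$ as $(\psi\circ\tilde\phi^{-1})\cdot\nu$ on the boundary. Density of $C^2(\partial\tilde\phi(\Omega))$ in $L^2(\partial\tilde\phi(\Omega))$ then forces $G=c_1$ almost everywhere, which is exactly (\ref{lacondizione}) upon setting $c:=c_1$. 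The only remaining point, which I expect to be the main \emph{conceptual} subtlety (even though the check is elementary), is that (\ref{lacondizione}) is independent of the particular orthonormal basis chosen: for an orthogonal change $w_i=\sum_j U_{ij}v_j$, each of $\sum_i w_i^2$, $\sum_i\partial_\nu(w_i^2)$, $\sum_i|\nabla w_i|^2$ and $\sum_i|D^2 w_i|^2$ coincides with the corresponding sum in the $v_l$'s since $U^TU=I$. Hence the existential quantifier in the statement is actually universal, and the equivalence is complete; the analytic content is entirely packaged in Theorem \ref{duesette}, so the rest is just constant-bookkeeping.
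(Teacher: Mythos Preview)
Your proof is correct and follows exactly the approach the paper intends: the paper's own ``proof'' is the single line ``Using Lagrange Multipliers Theorem, it is easy to prove the following'', and you have filled in precisely the details one would expect---combining Theorem \ref{duesette} with Proposition \ref{vfiprop}, reducing to a boundary integral against $\mu\cdot\nu$, and then using the freedom in $\psi$ together with density. Your observations on the degenerate case $\lambda_F=0$ and on basis-independence of the sum in (\ref{lacondizione}) are both appropriate and correct.
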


	Now that we have a characterization for the criticality of $\tilde\phi$, we may wonder whether balls are critical domains. This is the aim of the following

	\begin{thm}
	Let $\Omega$ be a bounded domain of $\mathbb{R}^N$ of class $C^1$. Let $\tilde{\phi}\in\Phi(\Omega)$ be such that
	$\tilde{\phi}(\Omega)$ is a ball. Let $\tilde{\lambda}$ be an eigenvalue of problem (\ref{steklovphi}) in $\tilde{\phi}(\Omega)$,
	and let $F$ be the set of $j\in\mathbb{N}$ such that $\lambda_j[\tilde{\phi}]=\tilde{\lambda}$.
	Then $\Lambda_{F,s}$ has a critical point at $\tilde{\phi}$ on $V(\mathcal{V}(\tilde{\phi}))$,
	for all $s=1,\dots,|F|$.
	\proof
Using Lemma \ref{propedeutico} below and the fact that the mean curvature is constant for a ball,
	condition (\ref{lacondizione}) is immediately seen to be satisfied.
	\endproof
	\end{thm}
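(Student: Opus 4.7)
The plan is to verify the criticality condition (\ref{lacondizione}) from Theorem \ref{moltiplicatori} by exploiting the rotational symmetry of the ball. Since $\tilde{\phi}(\Omega)$ is a ball, it suffices to exhibit an orthonormal basis $v_1,\dots,v_{|F|}$ of the $\tilde{\lambda}$-eigenspace in $L^2(\partial\tilde{\phi}(\Omega))$ for which the left-hand side of (\ref{lacondizione}) is constant on $\partial\tilde{\phi}(\Omega)$.

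The key observation is that the eigenspace associated with $\tilde{\lambda}$ is invariant under the natural action of the orthogonal group $O(N)$ (which fixes the center of the ball), and that this action is by orthogonal transformations with respect to the $L^2(\partial\tilde{\phi}(\Omega))$ inner product. Consequently, if I apply any $R\in O(N)$ to an orthonormal basis $v_1,\dots,v_{|F|}$, the resulting functions $v_l\circ R$ are again an orthonormal basis of the same eigenspace, differing from the original one by an orthogonal change of basis. This means that the quadratic quantities
\begin{equation*}
A(x):=\sum_{l=1}^{|F|}v_l(x)^2,\quad B(x):=\sum_{l=1}^{|F|}|\nabla v_l(x)|^2,\quad C(x):=\sum_{l=1}^{|F|}|D^2v_l(x)|^2
\end{equation*}
are independent of the chosen orthonormal basis; combined with the invariance of the Frobenius norm under rotations (so that $|D^2(v\circ R)|^2=|D^2v|^2\circ R$ and analogously for the gradient), this forces $A$, $B$, $C$ to be $O(N)$-invariant, hence radial functions on $\tilde{\phi}(\Omega)$. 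Their restrictions to the sphere $\partial\tilde{\phi}(\Omega)$ are therefore constants. Since $\nu$ is the radial direction on a ball, the quantity $\sum_l\partial(v_l^2)/\partial\nu=\partial A/\partial\nu$ is likewise constant on the boundary. This is precisely the content of the invoked Lemma \ref{propedeutico}.

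Finally, the mean curvature $K$ of the sphere $\partial\tilde{\phi}(\Omega)$ is itself a constant. Putting everything together, each of the four terms in the sum
\begin{equation*}
\sum_{l=1}^{|F|}\left(\lambda_F[\tilde{\phi}]\left(Kv_l^2+\frac{\partial(v_l^2)}{\partial\nu}\right)-\tau|\nabla v_l|^2-|D^2v_l|^2\right)=\lambda_F[\tilde{\phi}]\left(KA+\frac{\partial A}{\partial\nu}\right)-\tau B-C
\end{equation*}
is constant a.e.\ on $\partial\tilde{\phi}(\Omega)$, so condition (\ref{lacondizione}) holds with $c$ equal to this common value. By Theorem \ref{moltiplicatori}, $\tilde{\phi}$ is a critical point of $\Lambda_{F,s}$ on $V(\mathcal{V}(\tilde{\phi}))$ for every $s=1,\dots,|F|$.

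The only real obstacle lies in the invariance statement packaged into Lemma \ref{propedeutico}, namely that the sum-of-squares expressions built from an orthonormal basis of an $O(N)$-invariant eigenspace are radial, and in particular constant on the boundary sphere; this is a standard consequence of the fact that the change of basis induced by a rotation is orthogonal and that the Euclidean, gradient, and Hessian norms are all $O(N)$-equivariant. Once this lemma is in hand, the theorem reduces to reading off the constant value of each summand.
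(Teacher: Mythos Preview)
Your proof is correct and follows essentially the same route as the paper: invoke Lemma~\ref{propedeutico} (whose content you have accurately reconstructed via the $O(N)$-invariance of the eigenspace and the orthogonal change-of-basis argument) together with the constancy of the mean curvature on a sphere, and then apply Theorem~\ref{moltiplicatori}. You have in fact been slightly more explicit than the paper in handling the term $\sum_l\partial(v_l^2)/\partial\nu$, correctly observing that it equals $\partial A/\partial\nu$ with $A$ radial, hence constant on the boundary sphere.
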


	\begin{lem}
	\label{propedeutico}
		Let $B$ be the unit ball in $\mathbb{R}^N$ centered at zero, and let $\lambda$ be an eigenvalue of problem (\ref{steklovphi}) in $B$. Let $F$ be the subset of
		$\mathbb{N}$ of all indeces $j$ such that the $j$-th eigenvalue of problem (\ref{steklovphi}) in $B$ coincides with $\lambda$. Let $v_1,\dots,v_{|F|}$
		be an orthonormal basis of the eigenspace associated with the eigenvalue $\lambda$, where the orthonormality is taken with respect to the scalar
		product in $L^2(\partial B)$. Then
		$$
		\sum_{j=1}^{|F|}v_j^2,\ \sum_{j=1}^{|F|}|\nabla v_j|^2,\ \sum_{j=1}^{|F|}|D^2v_j|^2
		$$ 
	are radial functions.
		
	\proof
		Let $O_N(\mathbb{R})$ denote the group of orthogonal linear transformations in $\mathbb{R}^N$. Since the Laplace operator is invariant under rotations, then
		$v_k\circ A$, where $A\in O_N(\mathbb R)$, is still an eigenfunction with eigenvalue $\lambda$; moreover, $\{v_j\circ A:j=1,\dots, |F|\}$ is another orthonormal basis
		for the eigenspace associated with to $\lambda$. Since both $\{v_j:j=1,\dots, |F|\}$ and $\{v_j\circ A:j=1,\dots, |F|\}$
		are orthonormal bases, then there exists $R[A]\in O_N(\mathbb{R})$ with matrix $(R_{ij}[A])_{i,j=1,\dots,|F|}$ such that
		\begin{equation*}
		%\label{eq}
		v_j=\sum_{l=1}^{|F|}R_{jl}[A]v_l\circ A.
		\end{equation*}
		This implies that
		\begin{equation*}
		%\label{eq2}
		\sum_{j=1}^{|F|}v_j^2=\sum_{j=1}^{|F|}(v_j\circ A)^2,
		\end{equation*}
		from which we get that $\sum_{j=1}^{|F|}v_j^2$ is radial.
		Moreover, using standard calculus, we get
		\begin{multline*}
		\sum_{j=1}^{|F|}|\nabla v_j|^2=\sum_{l_1,l_2=1}^{|F|}R_{jl_1}[A]R_{jl_2}[A]\left(\nabla v_{l_1}\circ A\right)\cdot\left(\nabla v_{l_2}\circ A\right)
		=\sum_{l=1}^{|F|}|\nabla v_l\circ A|^2,
		\end{multline*}
		and		
		\begin{multline*}
		%\label{eq3}
		D^2v_j\cdot D^2v_j=\sum_{l_1,l_2=1}^{|F|}R_{jl_1}[A]R_{jl_2}[A]A^t\cdot (D^2v_{l_1}\circ A)\cdot A\cdot A^t\cdot (D^2v_{l_2}\circ A)\cdot A\\
		=\sum_{l_1,l_2=1}^{|F|}R_{jl_1}[A]R_{jl_2}[A]A^t\cdot (D^2v_{l_1}\circ A)\cdot (D^2v_{l_2}\circ A)\cdot A,
		\end{multline*}
	hence
		\begin{equation*}
		%\label{eq4}
		|D^2v_j|^2=\mathrm{tr}(D^2v_j\cdot D^2v_j)=\sum_{l_1,l_2=1}^{|F|}R_{jl_1}[A]R_{jl_2}[A](D^2v_{l_1}\circ A):(D^2v_{l_2}\circ A),
		\end{equation*}
		from which we get
		\begin{equation*}
		%\label{eq5}
		\sum_{j=1}^{|F|}|D^2v_j|^2=\sum_{j=1}^{|F|}|D^2v_j\circ A|^2.
		\end{equation*}
	\endproof
	\end{lem}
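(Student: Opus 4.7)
The plan is to exploit the rotational symmetry of the problem when $\Omega = B$. Since the biharmonic operator, the Laplacian, the gradient, and the Hessian (together with its Frobenius norm) all transform covariantly under orthogonal transformations, and since $B$ and $\partial B$ are invariant under the orthogonal group $O_N(\mathbb{R})$, for any $A\in O_N(\mathbb{R})$ the function $v_j\circ A$ is again an eigenfunction of problem (\ref{steklovphi}) with the same eigenvalue $\lambda$. Since $A$ preserves the surface measure on $\partial B$, the family $\{v_j\circ A:j=1,\dots,|F|\}$ is again an $L^2(\partial B)$-orthonormal basis of the eigenspace.

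Next I would invoke the change-of-basis principle: there exists an orthogonal $|F|\times|F|$ matrix $R[A]=(R_{jl}[A])$ such that
\[
v_j=\sum_{l=1}^{|F|}R_{jl}[A]\,(v_l\circ A).
\]
Using the fact that the rows of $R[A]$ form an orthonormal system of $\mathbb{R}^{|F|}$, squaring and summing over $j$ kills the cross terms and yields $\sum_{j}v_j^2=\sum_{l}(v_l\circ A)^2$. Since this holds for every $A\in O_N(\mathbb{R})$, the function $\sum_j v_j^2$ is $O_N(\mathbb{R})$-invariant, hence radial.

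For the remaining two sums the same idea works, provided one handles correctly the chain-rule identities
\[
\nabla(v_l\circ A)=A^{t}\,(\nabla v_l\circ A),\qquad D^2(v_l\circ A)=A^{t}\,(D^2 v_l\circ A)\,A,
\]
which come from $A$ being orthogonal. Substituting into the expression for $v_j$ and expanding $|\nabla v_j|^2$ and $|D^2 v_j|^2=\operatorname{tr}(D^2 v_j\cdot D^2 v_j)$, the factor $AA^{t}=I$ makes the $A$'s cancel inside each term, leaving expressions of the form $\sum_{l_1,l_2}R_{jl_1}R_{jl_2}(\cdots)_{l_1,l_2}$; summing in $j$ and using orthogonality of $R[A]$ again collapses the double sum to $\sum_l|\nabla v_l\circ A|^2$ and $\sum_l|D^2 v_l\circ A|^2$, respectively.

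The one point that needs care is the Hessian transformation: one must verify that the Frobenius product $D^2 v_{l_1}:D^2 v_{l_2}$ is invariant under the similarity $M\mapsto A^{t}MA$, which follows from $\operatorname{tr}(A^{t}M_1AA^{t}M_2A)=\operatorname{tr}(M_1M_2)$ by cyclicity of the trace. This is the only mild technical step; once it is in place, the invariance argument and the conclusion that the three sums are radial functions are immediate.
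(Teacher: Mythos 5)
Your proposal is correct and follows essentially the same route as the paper's proof: rotation invariance of the problem on the ball, the orthogonal change-of-basis matrix $R[A]$ between the two orthonormal bases, the chain-rule identities $\nabla(v\circ A)=A^{t}(\nabla v\circ A)$ and $D^2(v\circ A)=A^{t}(D^2v\circ A)A$, and cancellation of the cross terms via orthogonality of $R[A]$ together with cyclicity of the trace for the Hessian term. No gaps.
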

	
	\subsection{The Neumann problem}
	
	As we have done for the Steklov problem, we study the Neumann problem in $\phi(\Omega)$, i.e.,
\begin{equation}
\label{neumannphi}
\left\{
\begin{array}{ll}
\Delta^2u-\tau\Delta u=\lambda u, & {\rm in\ } \phi(\Omega),\\
\frac{\partial^2u}{\partial\nu^2}=0, & {\rm on\ }\partial\phi(\Omega),\\
\tau\frac{\partial u}{\partial\nu}-\mathrm{div}_{\partial\phi(\Omega)}(D^2u.\nu)-\frac{\partial\Delta u}{\partial\nu}=0,&{\rm on\ }\partial\phi(\Omega).
\end{array}\right.
\end{equation}
We consider the operator $\mathcal P^{\mathcal N}_{\phi}$ from $H^{2,\mathcal N}_{\phi}(\Omega)$ to $F(\Omega)$,
	defined by
	\begin{multline}
	\label{deltaphiN}
	\mathcal P^{\mathcal N}_{\phi}[u][\varphi]:=\int_{\Omega}(D^2(u\circ\phi^{-1})\circ\phi):(D^2(\varphi\circ\phi^{-1})\circ\phi)|\det D\phi|dx\\
		+\tau\int_{\Omega}(\nabla(u\circ\phi^{-1})\circ\phi)\cdot(\nabla(\varphi\circ\phi^{-1})\circ\phi)|\det D\phi|dx,\ 
		\forall u\in H^{2,\mathcal N}_{\phi}(\Omega), \varphi\in H^2(\Omega),
	\end{multline}
where
	$$
	H^{2,\mathcal N}_{\phi}(\Omega):=\left\{u\in H^2(\Omega):\int_{\Omega}u|\det D\phi|dx=0\right\},
	$$
	Moreover, for every $\phi\in\Phi(\Omega)$, we consider the map $\mathcal{J}^{\mathcal N}_{\phi}$ from $L^2(\Omega)$ to $H^2(\Omega)'$ defined by
	\begin{equation*}
	\mathcal{J}^{\mathcal N}_{\phi}[u][\varphi]:=\int_{\Omega}u\varphi|\det D\phi|d\sigma,\ \forall u\in L^2(\Omega),\varphi\in H^2(\Omega).
	\end{equation*}
We will think of the space $H^{2,\mathcal N}_{\phi}(\Omega)$ as endowed with the scalar product induced by (\ref{deltaphiN}). We denote by $\pi^{\mathcal N}_{\phi}$  the map from $H^2(\Omega)$ to $H^{2,\mathcal N}_{\phi}(\Omega)$ defined by
	$$
	\pi^{\mathcal N}_{\phi}(u)=:u-\frac{\int_{\Omega}u|\det D\phi|dx}{\int_{\Omega}|\det D\phi|dx},
	$$
and by $\pi_{\phi}^{\sharp,\mathcal N}$ the map from $H^2(\Omega)/\mathbb{R}$ onto $ H^{2,\mathcal N}_{\phi}(\Omega)$ defined by the equality $\pi^{\mathcal N}_{\phi}=\pi_{\phi}^{\sharp,\mathcal N}\circ p$. Clearly, $\pi_{\phi}^{\sharp,\mathcal N}$ is a homeomorphism, and we can recast problem (\ref{neumannphi}) as
$$
\lambda^{-1}u=W^{\mathcal N}_{\phi}u,
$$
where $W^{\mathcal N}_{\phi}:=(\pi_{\phi}^{\sharp,\mathcal N})^{-1}\circ(\mathcal P^{\mathcal N}_{\phi})^{-1}\circ\mathcal{J}^{\mathcal N}_{\phi}\circ i\circ\pi_{\phi}^{\sharp,\mathcal N}$ and $i$ is the canonical embedding of $H^2(\Omega)$
into $L^2(\Omega)$.
An analogue of Theorem \ref{thesame} can be stated also in this case. Therefore, we can compute Hadamard-type formulas for the
Neumann eigenvalues. This is contained in the following

\begin{thm}
	%\label{duesetten}
		Let $\Omega$ be a bounded domain in $\mathbb{R}^N$ of class $C^1$. Let
		$F$ be a finite non-empty subset of $\mathbb{N}$. Let $\tilde{\phi}\in\Theta_{\Omega}[F]$ be such that $\partial\tilde{\phi}(\Omega)
		\in C^4$.
		Let $v_1,\dots,v_{|F|}$ be an orthonormal basis of the eigenspace associated with the eigenvalue $\lambda_F[\tilde{\phi}]$ of
		problem (\ref{neumannphi}) in $L^2(\tilde{\phi}(\Omega))$. Then
		\begin{equation*}
		%\label{derivn}
			d|_{\phi=\tilde{\phi}}(\Lambda_{F,s})[\psi]=-\lambda_F^{s-1}[\tilde{\phi}]\binom{|F|-1}{s-1}
\sum_{l=1}^{|F|}\int_{\partial\tilde{\phi}(\Omega)}\left(\lambda_Fv_l^2-\tau|\nabla v_l|^2-|D^2v_l|^2\right)\mu\cdot\nu d\sigma,
		\end{equation*}
		for all $\psi\in(C^2(\overline{\Omega}))^N$, where $\mu=\psi\circ\tilde{\phi}^{-1}$.
	
	\proof The proof is similar to that of Theorem \ref{duesette}.
	
	First of all we note that, by elliptic regularity theory, $v_1,\dots,v_{|F|}\in H^4(\tilde{\phi}(\Omega))$ (see \cite[\S 2.5]{ggs}).
	We set $u_l=v_l\circ\tilde{\phi}$ for $l=1,\dots,|F|$. For $|F|>1$ (case $|F|=1$ is similar), $s\leq|F|$, we have
	\begin{equation*}
	%\label{formulan}
	d|_{\phi=\tilde{\phi}}(\Lambda_{F,s})[\psi]
	=-\lambda_F^{s+2}[\tilde{\phi}]\binom{|F|-1}{s-1}\sum_{l=1}^{|F|}\mathcal P^{\mathcal N}_{\tilde{\phi}}\left[
		d|_{\phi=\tilde{\phi}}W^{\mathcal N}_{\phi}[\psi][p(u_l)]\right]\left[p(u_l)\right].
	\end{equation*}

	By standard calculus in normed spaces we have:
	\begin{multline*}
			\mathcal P^{\mathcal N}_{\tilde{\phi}}\left[\mathrm{d}|_{\phi=\tilde{\phi}}\left((\pi_{\phi}^{\sharp,\mathcal N})^{-1}\circ\left(
				\mathcal P^{\mathcal N}_{\phi}\right)^{-1}\circ\mathcal{J}^{\mathcal N}_{\phi}\circ i\circ\pi_{\phi}^{\sharp,\mathcal N}\right)[\psi][p(u_l)]\right]\left[p(u_l)\right]  \\ 
			= \mathcal P^{\mathcal N}_{\tilde{\phi}}\left[(\pi_{\tilde{\phi}}^{\sharp,\mathcal N})^{-1}\circ\left(\mathcal P^{\mathcal N}_{\tilde{\phi}}\right)^{-1}\circ
							\mathrm{d}|_{\phi=\tilde{\phi}}\left(\mathcal{J}^{\mathcal N}_{\phi}\circ i\circ\pi_{\phi}^{\sharp,\mathcal N}\right)[\psi][p(u_l)]\right]\left[p(u_l)\right]\\
			 						+\mathcal P^{\mathcal N}_{\tilde{\phi}}\left[\mathrm{d}|_{\phi=\tilde{\phi}}\left((\pi_{\phi}^{\sharp,\mathcal N})^{-1}\circ\left(
				\mathcal P^{\mathcal N}_{\phi}\right)^{-1}\right)[\psi]\circ\mathcal{J}^{\mathcal N}_{\tilde{\phi}}\circ i\circ\pi_{\tilde{\phi}}^{\sharp,\mathcal N}[p(u_l)]\right]\left[p(u_l)\right].
	\end{multline*}
	
	Now note that
	\begin{multline*}
	%\label{jay}
		\mathcal P^{\mathcal N}_{\tilde{\phi}}\left[	(\pi_{\tilde{\phi}}^{\sharp,\mathcal N})^{-1}\circ\left(\mathcal P^{\mathcal N}_{\tilde{\phi}}\right)^{-1}\circ
							\mathrm{d}|_{\phi=\tilde{\phi}}\left(\mathcal{J}^{\mathcal N}_{\phi}\circ i\circ\pi_{\phi}^{\sharp,\mathcal N}\right)[\psi][p(u_l)]\right]\left[p(u_l)\right]\\
			=\int_{\tilde{\phi}(\Omega)}v_l^2\mathrm{div}\mu dy,
	\end{multline*}
	(see also Proposition \ref{vfiprop}) and
	\begin{multline*}
	%\label{secondn}
		\mathcal P^{\mathcal N}_{\tilde{\phi}}\left[\mathrm{d}|_{\phi=\tilde{\phi}}\left((\pi_{\phi}^{\sharp,\mathcal N})^{-1}\circ\left(
				\mathcal P^{\mathcal N}_{\phi}\right)^{-1}\right)[\psi]\circ\mathcal{J}^{\mathcal N}_{\tilde{\phi}}\circ i\circ\pi_{\tilde{\phi}}^{\sharp,\mathcal N}[p(u_l)]\right]\left[p(u_l)\right]\\
				=-\lambda_F^{-1}d|_{\phi=\tilde{\phi}}\left(\mathcal P^{\mathcal N}_{\phi}\circ\pi^{\mathcal N}_{\phi}\right)[\psi][u_l][\pi^{\mathcal N}_{\tilde{\phi}}(u_l)].
	\end{multline*}
	Using formula (\ref{formulafondamentale}) we obtain
	\begin{multline*}
				\mathcal P^{\mathcal N}_{\tilde{\phi}}\left[\mathrm{d}|_{\phi=\tilde{\phi}}\left((\pi_{\phi}^{\sharp,\mathcal N})^{-1}\circ\left(
				\mathcal P^{\mathcal N}_{\phi}\right)^{-1}\right)[\psi]\circ\mathcal{J}^{\mathcal N}_{\tilde{\phi}}\circ i\circ\pi_{\tilde{\phi}}^{\sharp,\mathcal N}[p(u_l)]\right]\left[p(u_l)\right]\\
=-\lambda_F^{-1}\int_{\partial\tilde{\phi}(\Omega)}\left(|D^2v_l|^2+\tau|\nabla v_l|^2\right)\mu\cdot\nu d\sigma
+\int_{\tilde{\phi}(\Omega)}\nabla(v_l^2)\cdot\mu dy.
\end{multline*}
To conclude, just observe that
\begin{equation*}
\int_{\tilde{\phi}(\Omega)}\nabla(v_l^2)\cdot\mu dy=
\int_{\partial\tilde{\phi}(\Omega)}v_l^2\mu\cdot\nu d\sigma
-\int_{\tilde{\phi}(\Omega)}(v_l^2)\mathrm{div}\mu dy.
\end{equation*}
	\endproof
\end{thm}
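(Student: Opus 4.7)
The plan is to imitate step by step the proof of Theorem \ref{duesette}, keeping track of how the Neumann (rather than Steklov) boundary conditions simplify the terms coming from the fundamental formula \eqref{formulafondamentale}. First, by elliptic regularity \cite[\S 2.5]{ggs}, the eigenfunctions $v_l$ belong to $H^4(\tilde\phi(\Omega))$, so Lemma at \eqref{formulafondamentale} applies. Setting $u_l=v_l\circ\tilde\phi$, I would invoke the standard perturbation identity (cf.\ \cite[Theorem 3.38]{lala2004})
\begin{equation*}
d|_{\phi=\tilde{\phi}}(\Lambda_{F,s})[\psi]=-\lambda_F^{s+2}[\tilde{\phi}]\binom{|F|-1}{s-1}\sum_{l=1}^{|F|}\mathcal P^{\mathcal N}_{\tilde{\phi}}\bigl[d|_{\phi=\tilde{\phi}}W^{\mathcal N}_{\phi}[\psi][p(u_l)]\bigr]\bigl[p(u_l)\bigr],
\end{equation*}
so the problem reduces to computing the differential of $W^{\mathcal N}_\phi$ at $\tilde\phi$ evaluated on eigenfunctions.

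Next, I would differentiate the composition defining $W^{\mathcal N}_{\phi}$ by the product rule, splitting $d|_{\phi=\tilde\phi}W^{\mathcal N}_\phi[\psi]$ into two contributions: one in which the derivative hits $\mathcal{J}^{\mathcal N}_{\phi}\circ i\circ\pi_{\phi}^{\sharp,\mathcal N}$, the other in which it hits $(\pi_{\phi}^{\sharp,\mathcal N})^{-1}\circ(\mathcal P^{\mathcal N}_{\phi})^{-1}$. When paired against $p(u_l)$ and sandwiched with $\mathcal P^{\mathcal N}_{\tilde\phi}$, the first contribution collapses, by direct computation of the derivative of $|\det D\phi|$ (see Proposition \ref{vfiprop}), to the volume-type term $\int_{\tilde\phi(\Omega)}v_l^2\,\mathrm{div}\mu\,dy$, exactly as in the Steklov proof. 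The second contribution, by the same manipulation done in the Steklov case, equals $-\lambda_F^{-1}d|_{\phi=\tilde\phi}(\mathcal P^{\mathcal N}_\phi\circ\pi^{\mathcal N}_\phi)[\psi][u_l][\pi^{\mathcal N}_{\tilde\phi}(u_l)]$.

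The central computation is now to apply the fundamental formula \eqref{formulafondamentale} to this last expression with $u_1=u_2=u_l$, $v_1=v_2=v_l$. This is the step that will feel the difference between Steklov and Neumann: in the Steklov case the volume integral vanished because of the interior equation $\Delta^2v_l-\tau\Delta v_l=0$, and the boundary term $\mathrm{div}_{\partial\tilde\phi(\Omega)}(D^2v_l.\nu)+\frac{\partial\Delta v_l}{\partial\nu}-\tau\frac{\partial v_l}{\partial\nu}$ survived and paired with $\lambda v_l$. Here the situation is dual: the Neumann boundary condition $\tau\frac{\partial v_l}{\partial\nu}-\mathrm{div}_{\partial\tilde\phi(\Omega)}(D^2v_l.\nu)-\frac{\partial\Delta v_l}{\partial\nu}=0$ kills the four boundary terms, whereas the interior equation $\Delta^2v_l-\tau\Delta v_l=\lambda_F v_l$ converts the volume integral into $-\lambda_F\int_{\tilde\phi(\Omega)}\nabla(v_l^2)\cdot\mu\,dy$. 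What remains is the term $\int_{\partial\tilde\phi(\Omega)}(|D^2v_l|^2+\tau|\nabla v_l|^2)\mu\cdot\nu\,d\sigma$.

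Finally, to assemble the two contributions, I would use the divergence identity $\int_{\tilde\phi(\Omega)}v_l^2\,\mathrm{div}\mu\,dy=\int_{\partial\tilde\phi(\Omega)}v_l^2\,\mu\cdot\nu\,d\sigma-\int_{\tilde\phi(\Omega)}\nabla(v_l^2)\cdot\mu\,dy$ so that the two interior gradient terms cancel once the factor $-\lambda_F^{-1}$ is taken into account, leaving only a boundary integral of $(\lambda_F v_l^2 - \tau|\nabla v_l|^2 - |D^2v_l|^2)\mu\cdot\nu$. Putting this back into the perturbation identity above, the powers of $\lambda_F$ collapse to $\lambda_F^{s-1}$ and yield the stated formula. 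The only genuinely delicate step is the bookkeeping in applying \eqref{formulafondamentale} and verifying that the Neumann condition annihilates precisely the $\mu$-tangential boundary terms; once this is done, all other manipulations are routine integrations by parts copied verbatim from the Steklov case.
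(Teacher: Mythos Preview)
Your proposal is correct and follows essentially the same approach as the paper's own proof: the same perturbation identity from \cite{lala2004}, the same product-rule splitting of $dW^{\mathcal N}_\phi$, the same evaluation of the two pieces via Proposition \ref{vfiprop} and formula \eqref{formulafondamentale}, and the same closing integration by parts. Your explicit remark that the Neumann situation is ``dual'' to the Steklov one---the boundary condition kills the $\mu$-tangential boundary terms in \eqref{formulafondamentale} while the interior equation converts the volume term into $-\lambda_F\int\nabla(v_l^2)\cdot\mu$---is exactly the mechanism the paper uses (though the paper states the outcome without spelling this out).
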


Now we can state the analogue of Theorem \ref{moltiplicatori} for problem (\ref{neumannphi}).
	\begin{thm}
	%\label{moltiplicatorin}
	Let $\Omega$ be a bounded domain in $\mathbb{R}^N$ of class $C^1$. Let $F$ be a non-empty finite
	subset of $\mathbb{N}$. Let $\mathcal{V}_0\in]0,+\infty[$. Let $\tilde{\phi}\in V(\mathcal{V}_0)$ be such that
	$\partial\tilde{\phi}(\Omega)\in C^4$ and $\lambda_j[\tilde{\phi}]$ have a common value $\lambda_F[\tilde{\phi}]$ for all $j\in F$ and
	$\lambda_l[\tilde{\phi}]\neq\lambda_F[\tilde{\phi}]$
	for all $l\in\mathbb{N}\setminus F$. For $s=1,\dots, |F|$, the function $\tilde{\phi}$ is a critical point for $\Lambda_{F,s}$ on
	$V(\mathcal{V}_0)$ if and only if there exists an orthonormal basis $v_1,\dots,v_{|F|}$ of the eigenspace corresponding to the eigenvalue
	$\lambda_F[\tilde{\phi}]$ of problem (\ref{neumannphi}) in $L^2(\tilde{\phi}(\Omega))$, and a constant $c\in\mathbb{R}$ such that
	\begin{equation*}
	%\label{lacondizionen}
	\sum_{l=1}^{|F|}\left(\lambda_Fv_l^2-\tau|\nabla v_l|^2-|D^2v_l|^2\right)=c,
	\mathrm{\ a.e.\ on\ }\partial\tilde{\phi}(\Omega).
	\end{equation*}
	\end{thm}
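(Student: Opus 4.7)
The plan is to apply the Lagrange Multiplier Theorem to the smooth functional $\Lambda_{F,s}$ restricted to the codimension-one real analytic manifold $V(\mathcal{V}_0)$ from Proposition \ref{vfiprop}. By that proposition, the tangent space at $\tilde{\phi}$ is exactly the set of $\psi\in (C^2(\overline{\Omega}))^N$ whose associated vector field $\mu=\psi\circ\tilde{\phi}^{-1}$ satisfies $\int_{\partial\tilde{\phi}(\Omega)}\mu\cdot\nu\,d\sigma=0$. Thus $\tilde{\phi}$ is critical for $\Lambda_{F,s}$ on $V(\mathcal{V}_0)$ if and only if there exists $c\in\mathbb{R}$ with $d|_{\phi=\tilde{\phi}}\Lambda_{F,s}[\psi]=c\, d|_{\phi=\tilde{\phi}}\mathcal{V}(\phi)[\psi]$ for every $\psi\in(C^2(\overline{\Omega}))^N$.

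Next I would insert the Hadamard-type formula just proved for the Neumann eigenvalues together with the expression $d|_{\phi=\tilde{\phi}}\mathcal{V}(\phi)[\psi]=\int_{\partial\tilde{\phi}(\Omega)}\mu\cdot\nu\,d\sigma$ from Proposition \ref{vfiprop}. Since $\lambda_F[\tilde{\phi}]>0$ and the binomial coefficient is nonzero, absorbing $-\lambda_F^{s-1}[\tilde{\phi}]\binom{|F|-1}{s-1}$ into the multiplier reduces the proportionality to
\[
\int_{\partial\tilde{\phi}(\Omega)}\biggl(\sum_{l=1}^{|F|}\bigl(\lambda_F v_l^2-\tau|\nabla v_l|^2-|D^2v_l|^2\bigr)-c\biggr)\mu\cdot\nu\,d\sigma=0
\]
for every admissible $\psi$. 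Since $\partial\tilde{\phi}(\Omega)\in C^4$, the normal traces $\mu\cdot\nu_{|\partial\tilde{\phi}(\Omega)}$ realised by $\psi\in(C^2(\overline{\Omega}))^N$ exhaust a dense subspace of $L^2(\partial\tilde{\phi}(\Omega))$ via a standard inward lifting of a prescribed boundary datum. The fundamental lemma of the calculus of variations then delivers the pointwise identity a.e.\ on $\partial\tilde{\phi}(\Omega)$; the converse implication is immediate by integrating the pointwise identity against any $\mu\cdot\nu$.

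To match the statement literally I would finally observe that the three sums $\sum_l v_l^2$, $\sum_l|\nabla v_l|^2$, and $\sum_l|D^2v_l|^2$ are independent of the specific $L^2(\tilde{\phi}(\Omega))$-orthonormal basis of the eigenspace, because each is the trace of a symmetric quadratic form on a finite-dimensional space, hence basis-invariant; this is what makes the ``there exists an orthonormal basis'' in the statement equivalent to ``for every orthonormal basis.'' The main obstacle is really the density step for the admissible normal traces, which uses the $C^4$ regularity of the boundary; beyond this, the argument is a direct adaptation of the Steklov analogue (Theorem \ref{moltiplicatori}) and of the scheme of \cite{lalacri}.
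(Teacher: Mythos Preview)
Your proposal is correct and follows exactly the approach the paper intends: the paper states the Steklov analogue (Theorem \ref{moltiplicatori}) with the sole comment ``Using Lagrange Multipliers Theorem, it is easy to prove the following,'' and for the Neumann version gives no proof at all, so your argument (Lagrange multipliers plus the Hadamard formula plus density of admissible normal traces) is precisely what is meant. Your additional remark on basis-independence of the sums is a useful clarification that the paper omits.
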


	We observe that Lemma \ref{propedeutico} holds for problem (\ref{neumannphi}) as well, since in the proof we have only used the rotation invariance of the Laplace operator. Then, we are led to the following
		
	\begin{thm}
	Let $\Omega$ be a bounded domain in $\mathbb{R}^N$ of class $C^1$. Let $\tilde{\phi}\in\Phi(\Omega)$ be such that
	$\tilde{\phi}(\Omega)$ is a ball. Let $\tilde{\lambda}$ be an eigenvalue of problem (\ref{neumannphi}) in $\tilde{\phi}(\Omega)$,
	and let $F$ be the set of $j\in\mathbb{N}$ such that $\lambda_j[\tilde{\phi}]=\tilde{\lambda}$.
	Then $\Lambda_{F,s}$ has a critical point at $\tilde{\phi}$ on $V(\mathcal{V}(\tilde{\phi}))$,
	for all $s=1,\dots,|F|$.
	\end{thm}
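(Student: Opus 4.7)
The plan is to mirror the argument already given for the Steklov case, using the Neumann analogue of Theorem~\ref{moltiplicatori} (the Lagrange-multiplier characterization stated just above this theorem) together with Lemma~\ref{propedeutico}, which, as the authors remark, remains valid for the Neumann problem since its proof only invokes the rotation invariance of the Laplacian and the corresponding boundary conditions, both of which carry over.

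First I would set $B=\tilde{\phi}(\Omega)$, a ball, and let $v_1,\dots,v_{|F|}$ be an orthonormal basis of the eigenspace of problem~(\ref{neumannphi}) in $L^2(B)$ associated with $\tilde{\lambda}=\lambda_F[\tilde{\phi}]$. The Neumann analogue of Theorem~\ref{moltiplicatori} tells us that $\tilde{\phi}$ is critical for $\Lambda_{F,s}$ on $V(\mathcal{V}(\tilde{\phi}))$ if and only if there is some choice of orthonormal basis for which the function
\begin{equation*}
\Psi:=\sum_{l=1}^{|F|}\bigl(\lambda_F v_l^2-\tau|\nabla v_l|^2-|D^2v_l|^2\bigr)
\end{equation*}
is constant a.e.\ on $\partial B$.

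Next I would observe that by Lemma~\ref{propedeutico} (applied to the Neumann problem), each of the three sums
\begin{equation*}
\sum_{l=1}^{|F|}v_l^2,\qquad \sum_{l=1}^{|F|}|\nabla v_l|^2,\qquad \sum_{l=1}^{|F|}|D^2v_l|^2
\end{equation*}
is a radial function on $B$. Consequently $\Psi$ is itself radial, and therefore its trace on $\partial B$ is constant. Note that in the Neumann case the condition does not involve the curvature term $K$ nor the normal derivative $\partial(v_l^2)/\partial\nu$, so no extra input beyond radiality is needed. This immediately produces the constant $c$ required by the Lagrange multiplier criterion.

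There is really no serious obstacle here once the preceding infrastructure is in place: the potentially delicate points (the Hadamard-type formula, the Lagrange multiplier reformulation, and the rotation-invariance argument yielding radial symmetry of the relevant quadratic sums) have already been handled. The only item to double-check is that Lemma~\ref{propedeutico}, stated for problem~(\ref{steklovphi}), applies verbatim to problem~(\ref{neumannphi}): the boundary conditions and the PDE are both invariant under orthogonal transformations, so if $v_k$ is an eigenfunction then so is $v_k\circ A$ for $A\in O_N(\mathbb{R})$, and the change-of-basis orthogonal matrix $R[A]$ can be extracted exactly as in the Steklov proof, giving the desired radiality of $\sum v_l^2$, $\sum|\nabla v_l|^2$, and $\sum|D^2v_l|^2$. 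This completes the verification of criticality for every $s=1,\dots,|F|$.
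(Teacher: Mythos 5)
Your proposal is correct and follows exactly the route the paper intends: invoke the Neumann analogue of the Lagrange-multiplier criterion and observe, via Lemma \ref{propedeutico} (valid for the Neumann problem by the same rotation-invariance argument), that $\sum_l\bigl(\lambda_F v_l^2-\tau|\nabla v_l|^2-|D^2v_l|^2\bigr)$ is radial and hence constant on the boundary of the ball. Nothing further is needed.
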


\section{The fundamental tone of the ball. The isoperimetric inequality}
\label{sec:5}
In the previous section we have shown that the ball is a critical point for all the elementary symmetric functions of the eigenvalues of problem (\ref{Steklov-Bi}). In this section we prove that the ball is actually a maximizer for the fundamental tone, that is 
\begin{equation}\label{ISO}
\lambda_2(\Omega)\leq\lambda_2(\Omega^{\ast}),
\end{equation}
where $\Omega^{\ast}$ is a ball such that $|\Omega|=|\Omega^{\ast}|$.

\subsection{Eigenvalues and eigenfunctions on the ball}

We characterize the eigenvalues and the eigenfunctions of (\ref{Steklov-Bi}) when $\Omega=B$ is the unit ball in $\mathbb R^N$ centered at the origin. It is convenient to use spherical coordinates $(r,\theta)$, where $\theta=(\theta_1,...,\theta_{N-1})$. The corresponding trasformation of coordinates is

\begin{eqnarray*}
x_1&=&r \cos(\theta_1),\\
x_2&=&r\sin(\theta_1)\cos(\theta_2),\\
\vdots\\
x_{N-1}&=&r\sin(\theta_1)\sin(\theta_2)\cdots\sin(\theta_{N-2})\cos(\theta_{N-1}),\\
x_N&=&r\sin(\theta_1)\sin(\theta_2)\cdots\sin(\theta_{N-2})\sin(\theta_{N-1}),
\end{eqnarray*}
with $\theta_1,...,\theta_{N-2}\in [0,\pi]$, $\theta_{N-1}\in [0,2\pi[$ (here it is understood that $\theta_1\in [0,2\pi[$ if $N=2$).

The boundary conditions of (\ref{Steklov-Bi}) in this case are written as

\begin{equation*}%\label{Steklov-Bi-Ball}
\left\{\begin{array}{ll}
\frac{\partial^2 u}{\partial r^2 }_{|_{r=1}}=0 ,\\
\tau\frac{\partial u}{\partial r }-\frac{1}{r^2}{\Delta_S}\Big(\frac{\partial u}{\partial r}-\frac{u}{r}\Big)-\frac{\partial\Delta u}{\partial r}_{|_{r=1}}=\lambda u_{|_{r=1}},
\end{array}\right.
\end{equation*}
where $\Delta_S$ is the angular part of the Laplacian. It is well known that the eigenfunctions can be written as a product of a radial part and an angular part (see \cite{chas1} for details). The radial part is given in terms of ultraspherical modified Bessel functions and powertype functions. The ultraspherical modified Bessel functions $i_l(z)$ and $k_l(z)$ are defined as follows
\begin{eqnarray*}
i_l(z):=z^{1-\frac{N}{2}} I_{\frac{N}{2}-1+l}(z),\\
k_l(z):=z^{1-\frac{N}{2}} K_{\frac{N}{2}-1+l}(z),
\end{eqnarray*}
for $l\in\mathbb N$, where $I_{\nu}(z)$ and $K_{\nu}(z)$ are the modified Bessel functions of first and second kind respectively. We recall that $i_l(z)$ and all its derivatives are positive on $]0,+\infty[$ (see \cite[\S 9.6]{abram}). We recall that the Bessel functions $J_{\nu}$ and $N_{\nu}$ solve the Bessel equation
$$
z^2 y''(z)+zy'(z)+(z^2-\nu^2)y(z)=0,
$$ 
while the modified Bessel functions $I_{\nu}$ and $K_{\nu}$ solve the modified Bessel equation
$$
z^2 y''(z)+zy'(z)+(z^2+\nu^2)y(z)=0.
$$

We have the following 

\begin{thm}\label{eigenfunctions}
Let $\Omega$ be the unit ball in $\mathbb R^N$ centered at the origin. Any eigenfunction $u_l$ of problem (\ref{Steklov-Bi}) is of the form $u_l(r,\theta)=R_l(r)Y_l(\theta)$ where $Y_l(\theta)$ is a spherical harmonic of some order $l\in\mathbb N$ and
$$
R_l(r)=A_l r^l+B_l i_l({\sqrt{\tau}} r),
$$
where $A_l$ and $B_l$ are suitable constants such that
\begin{equation*}%\label{rapporto}
B_l=\frac{l(1-l)}{\tau i''_l(\sqrt{\tau})}A_l.
\end{equation*}
Moreover, the eigenvalue $\lambda_{(l)}$ associated with the eigenfunction $u_l$ is delivered by formula
%\small
\begin{multline}\label{eigenvalues}
\lambda_{(l)}=l\Big((1 - l) l i_l(\sqrt{\tau}) + 
 \tau i_l''(\sqrt{\tau})\Big)^{-1}\Big[ 3 (l-1) l (l+N-2) i_l(\sqrt{\tau})\\
 - (l-1) \sqrt{
    \tau} \big(N-1 + 2 N l + 2l(l-2) l + \tau\big) i_l'(\sqrt{
     \tau})\\
		+ \tau \big((l-1) (l+2N-3)+ \tau\big) i_l''(
     \sqrt{\tau})\\
		+ (l-1) \tau\sqrt{\tau} 
i_l'''(\sqrt{\tau})\Big],
\end{multline}
%\normalsize
for any $l\in\mathbb N$.%, and the corresponding eigenfunctions are the functions $u_l(r,\theta)$ defined as in (\ref{eigenl}).

\proof
Solutions of problem (\ref{Steklov-Bi}) in the unit ball are smooth (see e.g., \cite[Theorem 2.20]{ggs}). We consider two cases: $\Delta u=0$ and $\Delta u\ne 0$.

Let $u$ be such that $\Delta u=0$. The Laplacian can be written in spherical coordinates as
$$
\Delta=\partial_{rr}+\frac{N-1}{r}\partial_r+\frac{1}{r^2}\Delta_S.
$$
Separating variables so that $u=R(r)Y(\theta)$ we obtain the equations
\begin{equation}\label{R}
R''+\frac{N-1}{r}R'-\frac{l(l+N-2)}{r^2}R=0
\end{equation}
and
\begin{equation}\label{Y}
\Delta_S Y=-l(l+N-2)Y.
\end{equation}
The solutions of equation (\ref{R}) are given by $R(r)=a r^l+b r^{2-N-l}$ if $l>0,N\geq 2$, and by $R(r)=a+b \log(r)$ if $l=0,N=2$. Since the solutions cannot blow up at $r=0$, we must impose $b=0$. The solutions of the second equation are the spherical harmonics of order $l$. Then $u$ can be written as 
$$u(r,\theta)=a_{l}r^{l}Y_{l}(\theta)$$
 for some $l\in\mathbb N$.

Let us consider now the case $\Delta u\ne 0$. We set $v=\Delta u$ and solve the equation 
$$
\Delta v=\tau v.
$$
By writing $v=R(r)Y(\theta)$ we obtain that $R$ solves the equation

\begin{equation}\label{equa2}
R''+\frac{N-1}{r}R'-\frac{l(l+N-2)}{r^2}R=\tau R,
\end{equation}
while $Y$ solves equation (\ref{Y}). Equation (\ref{equa2}) is the modified ultraspherical Bessel equation that is solved by the modified ultraspherical Bessel functions of first and second kind $i_l(\sqrt{\tau}r)$ and $k_l(\sqrt{\tau}r)$. Since the solutions cannot blow up at $r=0$, we must choose only $i_l(z)$ since $k_l(z)$ has a singularity at $z=0$. Then
$$
v(r,\theta)=b_{l_1}i_{l_1}(\sqrt{\tau}r)Y_{l_1}(\theta)
$$ 
for some $l_1\in\mathbb N$. Now $v=\frac{\Delta v}{\tau}=\Delta u$, that is $\Delta(v/\tau-u)=0$. This means that
\begin{equation}\label{solution-1}
u(r,\theta)=\frac{b_{l_1}}{\tau}i_{l_1}(\sqrt{\tau}r)Y_{l_1}(\theta)-c_{l_2}r^{l_2}Y_{l_2}(\theta)
\end{equation}
for some $l_2\in\mathbb N$.

Now we prove that the indexes $l_1$ and $l_2$ in (\ref{solution-1}) must coincide. This can be shown by imposing the boundary condition $\frac{\partial^2 u}{\partial r^2}_{|_{r=1}}=0$, which can be written as
\begin{equation}\label{agree}
b_{l_1}i''_{l_1}(\sqrt{\tau})Y_{l_1}(\theta)-c_{l_2}l_2 (l_2-1)Y_{l_2}(\theta)=0.
\end{equation}
If the two indexes do not agree, the coefficients of  $Y_{l_i}, i=1,2$ must vanish since spherical harmonics with different indexes are linearly independent on $\partial \Omega$. Since $i_{l_1}''(\sqrt{\tau})>0$, this implies $b_{l_1}=0$ and therefore $l_2=0$ or $l_2=1$. Then we have
\begin{equation}\label{eigenl}
u_l(r,\theta)=\Big(A_l r^l+B_l i_l(\sqrt{\tau}r)\Big)Y_l(\theta),
\end{equation}
with suitable constants $A_l,B_l$. In the case $l\ne 0,1$, again from the boundary condition (\ref{agree}) we have
\begin{equation}\label{rapporto2}
l(l-1)A_l+\tau i_l''(\sqrt{\tau})B_l=0,
\end{equation}
then $B_l=\frac{l(1-l)}{\tau i_l''(\sqrt{\tau})}A_l$. Note that the formula holds also in the case $l=0,1$ since  these indexes correspond to $B_l=0$.

Finally, let us consider the boundary condition
\begin{equation}\label{condition2}
\tau\frac{\partial u}{\partial r }-\frac{1}{r^2}{\Delta_S}\Big(\frac{\partial u}{\partial r}-\frac{u}{r}\Big)-\frac{\partial\Delta u}{\partial r}_{|_{r=1}}=\lambda u_{|_{r=1}}.
\end{equation}
Using in (\ref{condition2}) the representation of $u_l$ provided by formula (\ref{eigenl}), we get
\begin{multline*}
\Bigg[\Big(-\lambda + l \big((l-1) (l+N-2) + \tau\big)\Big) A_l+ \Big(-\big(3 l( l+N-2) + \lambda\big) i_l(\sqrt{\tau}) \\
- \sqrt{\tau} \big((N-1 - 2 N l - 2 (l-2) l - \tau) i_l'(\sqrt{\tau})+ (N-1) \sqrt{\tau} i_l''(\sqrt{\tau})\\
	+ \tau i_l'''(\sqrt{\tau})\big)\Big) B_l\Bigg]Y_l(\theta)=\lambda\left(A_l + B_l i_l(\sqrt{\tau})\right)Y_l(\theta).
\end{multline*}
Using equality (\ref{rapporto2}) we get that $u_l$ given by (\ref{eigenl}) is an eigenfunction of (\ref{Steklov-Bi}) on the unit ball. Moreover, as a consequence, we also get formula (\ref{eigenvalues}) for the associated eigenvalue. This concludes the proof.
\endproof
\end{thm}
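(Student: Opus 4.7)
The plan is to exploit the rotational invariance of the operator $\Delta^2-\tau\Delta$ together with its factorization $\Delta^2-\tau\Delta=\Delta(\Delta-\tau)$. Since the operator commutes with rotations and $B$ is rotationally symmetric, the eigenspaces decompose according to the spherical harmonic expansion, so I would look for eigenfunctions in the product form $u(r,\theta)=R(r)Y_l(\theta)$, noting in advance that eigenfunctions on the ball are smooth up to the boundary by elliptic regularity (cf.\ \cite{ggs}), and that $\Delta_S Y_l=-l(l+N-2)Y_l$.

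I would then split into two cases depending on whether $v:=\Delta u$ vanishes identically. If $v\equiv 0$, then $u$ is harmonic in $B$ and the only radial profile regular at the origin for the associated Euler ODE
\[
R''+\tfrac{N-1}{r}R'-\tfrac{l(l+N-2)}{r^2}R=0
\]
is $R(r)=ar^l$, giving $u=ar^lY_l(\theta)$. If $v\not\equiv 0$, then $v$ satisfies $\Delta v=\tau v$; separation of variables reduces its radial part to the modified ultraspherical Bessel equation, whose solution regular at $r=0$ is $i_l(\sqrt\tau r)$. Since $\Delta(u-v/\tau)=0$, writing $u$ as $v/\tau$ plus a harmonic function yields
\[
u(r,\theta)=\tfrac{b_{l_1}}{\tau}\,i_{l_1}(\sqrt\tau r)\,Y_{l_1}(\theta)+c_{l_2}\,r^{l_2}\,Y_{l_2}(\theta),
\]
for some $l_1,l_2\in\mathbb N$.

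To reduce this two-index ansatz to a single index, I would impose the boundary condition $\partial^2u/\partial r^2|_{r=1}=0$: the resulting identity forces a linear combination of $Y_{l_1}$ and $Y_{l_2}$ to vanish on $\partial B$, and the orthogonality of spherical harmonics of distinct orders then implies either $l_1=l_2$, or that both coefficients vanish separately; the latter case (using $i_l''(\sqrt\tau)>0$, see \cite{abram}) forces $b_{l_1}=0$ and $l_2\in\{0,1\}$, situations already covered by the single-index form $u_l=(A_lr^l+B_li_l(\sqrt\tau r))Y_l(\theta)$ with $B_l=0$. The same computation then yields the algebraic relation $l(l-1)A_l+\tau i_l''(\sqrt\tau)B_l=0$, which is exactly the claimed link between $A_l$ and $B_l$.

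The eigenvalue formula \eqref{eigenvalues} would then follow by substituting this explicit form of $u_l$ into the dynamical boundary condition
\[
\tau\,\tfrac{\partial u}{\partial r}-\tfrac{1}{r^2}\Delta_S\!\left(\tfrac{\partial u}{\partial r}-\tfrac{u}{r}\right)-\tfrac{\partial\Delta u}{\partial r}\bigg|_{r=1}=\lambda\, u|_{r=1},
\]
using once more $\Delta_S Y_l=-l(l+N-2)Y_l$ and simplifying with the modified Bessel ODE whenever the power $i_l^{(k)}(\sqrt\tau)$ for $k\geq 2$ can be reduced. The main obstacle I anticipate is purely computational: careful bookkeeping of the terms in $l$, $N$, $\tau$ and the values $i_l^{(k)}(\sqrt\tau)$ for $k=0,1,2,3$, together with substitution of the $A_l$--$B_l$ relation, is needed to arrive at the compact single-fraction form stated in \eqref{eigenvalues}. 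Conceptually, though, no new idea beyond separation of variables, the factorization of the biharmonic, and the orthogonality of spherical harmonics is required.
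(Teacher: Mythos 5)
Your proposal follows essentially the same route as the paper's proof: the case split on $\Delta u=0$ versus $\Delta u\neq 0$, the reduction of $\Delta v=\tau v$ to the modified ultraspherical Bessel equation, the use of $\frac{\partial^2 u}{\partial r^2}\big|_{r=1}=0$ together with linear independence of spherical harmonics to force $l_1=l_2$ and to derive $l(l-1)A_l+\tau i_l''(\sqrt\tau)B_l=0$, and finally substitution into the dynamical boundary condition to extract $\lambda_{(l)}$. The only remaining work is the explicit bookkeeping you already flag, so the approach is correct and coincides with the paper's.
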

 
We are ready to state and prove the following theorem concerning the first positive eigenvalue.

\begin{thm}%\label{fundtone}
Let $\Omega$ be the unit ball in $\mathbb R^N$ centered at the origin. The first positive eigenvalue of (\ref{Steklov-Bi}) is $\lambda_2=\lambda_{(1)}=\tau$. The corresponding eigenspace is generated by $\left\{x_1,x_2,...x_N\right\}$.
\proof
By Theorem \ref{eigenfunctions}, $0=\lambda_{(0)}<\tau=\lambda_{(1)}$. We consider formula (\ref{eigenvalues}) with $l=2$. We have
\begin{multline}\label{lambda2}
\lambda_{(2)}=2\Big(\tau i_2''(\sqrt{\tau})-2 i_2(\sqrt{\tau})\Big)^{-1}\Big[6 N i_2(\sqrt{\tau}) - 
 \sqrt{\tau} (5N-1 + \tau)i_2'(\sqrt{\tau})\\
+ \tau (2N-1 + \tau) i_2''(\sqrt{\tau})+ \tau\sqrt{\tau} i_2'''(\sqrt{\tau})\Big].
\end{multline}
In order to prove that $\lambda_{(2)}>\tau$, we use some well-known recurrence relations between ultraspherical Bessel functions (see \cite[p. 376]{abram}),
%\small
\begin{eqnarray*}
i_l'(\sqrt{\tau}) &=& \frac{l}{\sqrt{\tau}} i_l(\sqrt{\tau}) + i_{l+1}(\sqrt{\tau}),\\
i_l''(\sqrt{\tau}) &=& \frac{l(l-1)}{\tau} i_l(\sqrt{\tau}) +\frac{l+2}{\tau}i_{l+1}(\sqrt{\tau}) + i_{l+2}(\sqrt{\tau}),\\
i_l'''(\sqrt{\tau}) &=& \frac{l(l-1)(l-2)}{\tau\sqrt{\tau}}i_{l}(\sqrt{\tau}) + \frac{l(2l+1)}{\tau} i_{l+1}(\sqrt{\tau}) +\frac{2(l+2)}{\sqrt{\tau}}i_{l+2}(\sqrt{\tau}) + i_{l+3}(\sqrt{\tau}).
\end{eqnarray*}
\normalsize
Using these relations in (\ref{lambda2}), we obtain an equivalent formula for $\lambda_{(2)}$,
\begin{multline*}%\label{lambda21}
\lambda_{(2)}=2\Big(5\sqrt{\tau}i_3(\sqrt{\tau})+\tau i_4(\sqrt{\tau})\Big)^{-1}\Big[(10N-2+2\tau)i_2(\sqrt{\tau})\nonumber\\
+\big(2-10N+(7+10N)\sqrt{\tau}-2\tau+5\tau\sqrt{\tau}\big)i_3(\sqrt{\tau})\nonumber\\
+\tau(8+2N+\tau)i_4(\sqrt{\tau})+\tau\sqrt{\tau}i_5(\sqrt{\tau})\Big].
\end{multline*}
By well-known properties of the functions $I_{\nu}$ (see \cite[\S 9]{abram}), it follows that $i_l\geq i_{l+1}$ for all $l\in\mathbb N$. This implies

\begin{multline*}
(10N-2+2\tau)i_2(\sqrt{\tau})+\big(2-10N+(7+10N)\sqrt{\tau}-2\tau+5\tau\sqrt{\tau}\big)i_3(\sqrt{\tau})\\
+\tau(8+2N+\tau)i_4(\sqrt{\tau})+\tau\sqrt{\tau}i_5(\sqrt{\tau})\geq\Big(5\tau\sqrt{\tau}i_3(\sqrt{\tau})+\tau^2 i_4(\sqrt{\tau})\Big),
\end{multline*}
then
$$
\lambda_{(2)}\geq 2\tau>\tau=\lambda_{(1)}.
$$

Now it remains to prove that $\lambda_{(l)}$ is an increasing function of $l$ for $l\geq 2$. We adapt the method used in \cite[Theorem 3]{chas1}. We claim that for any smooth radial function $R(r)$ the Rayleigh quotient
$$
\mathcal Q(R(r)Y_l(\theta))=\frac{\int_{B}|D^2(R(r)Y_l(\theta))|^2+\tau|\nabla(R(r)Y_l(\theta))|^2dx}{\int_{\partial B}R(r)^2Y_l(\theta)^2d\sigma}
$$
is an increasing function of $l$ for $l\geq 2$. We consider the spherical harmonics to be normalized with respect to the $L^2(\partial B)$ scalar product. In particular, we have that the denominator $D[R(r)Y_l(\theta)]$ of ${\mathcal Q}(R(r)Y_l(\theta))$ is $R^2(1)$. For the numerator $N[R(r)Y_l(\theta)]$ of the Rayleigh quotient we have

\begin{eqnarray*}
N[R(r)Y_l(\theta)]&=&\int_0^1\Bigg(\frac{2k}{r^4}\Big(rR'-\frac{3}{2}R\Big)^2+\frac{k(k-N-1/2)}{r^4}R^2+\tau\frac{kR^2}{r^2}\Bigg)r^{N-1}dr\\
&&+\int_0^1\Big((R''^2)+\frac{N-1}{r^2}(R')^2+\tau(R')^2\Big)r^{N-1}dr,
\end{eqnarray*}
where $k=l(l+N-2)$. The above expression is increasing in $k$ for $k\geq N+1/2$ and since $k$ is an increasing function of $l$, we easily get that each term involving $l$ is an increasing function of $l$ for $l\geq 2$. Thus the claim above is proved.

For each $l\in\mathbb N$,
\begin{equation}\label{ray22}
\lambda_{(l)}=\inf\mathcal Q(u)=\inf \frac{\int_{B}|D^2 u|^2+\tau |\nabla u|^2 dx}{\int_{\partial B}u^2 d\sigma},
\end{equation}
where the infimum is taken among all functions $u$ that are $L^2(\partial B)-$orthogonal to the first $m-1$ eigenfunctions $u_i$ and $m\in\mathbb N$ is such that $\lambda_{(l)}=\lambda_m$ is the $m-$th eigenvalue of problem (\ref{Steklov-Bi}). The eigenfunctions $u_l$ are of the form $u_l=R_l(r)Y_l(\theta)$, and $u_l$ realizes the infimum in (\ref{ray22}). Then 
$$
\lambda_{(l)}=\mathcal Q(R_l(r)Y_l(\theta))\leq\mathcal Q(R_{l+1}(r)Y_l(\theta))\leq\mathcal Q(R_{l+1}(r)Y_{l+1}(\theta))=\lambda_{(l+1)},
$$
where the first inequality follows from the fact that $R_{l+1}(r)Y_l(\theta)$ is also orthogonal with respect to the $L^2(\partial B)$ scalar product to the first $m-1$ eigenfunctions $R_i(r)Y_i(\theta)$ for $i=1,...m-1$, and then it is a suitable trial function in (\ref{ray22}). The second inequality follows from the fact that the quotient $\mathcal Q(R(r)Y_l(\theta))$ is an increasing function of $l$, for $l\geq 2$. This concludes the proof.
\endproof
\end{thm}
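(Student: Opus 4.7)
The plan is to exploit Theorem~\ref{eigenfunctions}, which already parametrizes the entire spectrum on the unit ball by the spherical harmonic index $l \in \mathbb{N}$, giving eigenvalues $\lambda_{(l)}$ with eigenfunctions $R_l(r)Y_l(\theta)$. It then suffices to identify $\lambda_{(0)}$ and $\lambda_{(1)}$ explicitly, and to prove $\lambda_{(l)} > \tau$ for every $l \geq 2$; combined with $\lambda_{(0)} = 0$ this forces $\lambda_2 = \lambda_{(1)} = \tau$.

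The low-index cases are immediate from Theorem~\ref{eigenfunctions}. The relation $B_l = l(1-l)A_l/(\tau\, i_l''(\sqrt{\tau}))$ forces $B_0 = B_1 = 0$, so $R_0$ is constant (yielding $\lambda_{(0)} = 0$) and $R_1(r) = A_1 r$. Since the order-one spherical harmonics are, up to normalization, the functions $x_i/r$, the level $l = 1$ eigenfunctions are spanned by $x_1,\dots,x_N$. For $u = x_i$ on the unit ball one has $\Delta u = 0$, $D^2 u = 0$, and $\partial u/\partial\nu = x_i$, so the boundary equation of \eqref{Steklov-Bi} reduces directly to $\tau x_i = \lambda x_i$, giving $\lambda_{(1)} = \tau$. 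Alternatively, substitution of $l = 1$ into \eqref{eigenvalues} annihilates every summand carrying the factor $(l-1)$ and leaves $\tau^2 i_1''(\sqrt{\tau})/(\tau\, i_1''(\sqrt{\tau})) = \tau$.

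The core of the argument is the inequality $\lambda_{(l)} > \tau$ for $l \geq 2$, which I would split into two stages. \emph{Stage A}: prove $\lambda_{(2)} > \tau$ directly. Substitute $l = 2$ into \eqref{eigenvalues} and use the standard three-term recurrences for the ultraspherical modified Bessel functions to rewrite $\lambda_{(2)}$ as a quotient involving only the positive quantities $i_2(\sqrt{\tau}), i_3(\sqrt{\tau}), i_4(\sqrt{\tau}), i_5(\sqrt{\tau})$; then the monotonicity $i_l(z) \geq i_{l+1}(z)$ on $(0,\infty)$ should yield a termwise lower bound of the form $\lambda_{(2)} \geq 2\tau$. \emph{Stage B}: prove $\lambda_{(l)} \leq \lambda_{(l+1)}$ for $l \geq 2$ by a Rayleigh quotient comparison. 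For a radial profile $R(r)$ and a normalized spherical harmonic $Y_l$, expand
\begin{equation*}
\mathcal{Q}(RY_l) = \frac{\int_B |D^2(RY_l)|^2 + \tau|\nabla(RY_l)|^2\,dx}{\int_{\partial B} R^2 Y_l^2\,d\sigma}
\end{equation*}
in spherical coordinates using $\Delta_S Y_l = -k Y_l$ with $k = l(l+N-2)$; the numerator becomes a polynomial in $k$ whose coefficients are nonnegative as soon as $k \geq N+1/2$, which holds for all $l \geq 2$. Consequently $\mathcal{Q}(RY_l) \leq \mathcal{Q}(RY_{l+1})$, and choosing $R = R_{l+1}$ the function $R_{l+1}Y_l$ is $L^2(\partial B)$-orthogonal to all preceding eigenfunctions (spherical harmonics of different orders being orthogonal on the sphere) and therefore admissible in the Rayleigh characterization \eqref{minmax} adapted to $\lambda_{(l+1)}$, yielding
\begin{equation*}
\lambda_{(l)} = \mathcal{Q}(R_l Y_l) \leq \mathcal{Q}(R_{l+1}Y_l) \leq \mathcal{Q}(R_{l+1}Y_{l+1}) = \lambda_{(l+1)}.
\end{equation*}

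The main obstacle is Stage~A. Formula \eqref{eigenvalues} is algebraically heavy, and after using the Bessel recurrences one obtains an expression in $i_2,\dots,i_5$ whose positivity is not automatic; the terms must be grouped so that the bounds $i_l \geq i_{l+1}$ dominate the negative contributions without cancellation. Stage~B is conceptually cleaner but still requires a nontrivial termwise sign check on the $k$-polynomial obtained from decomposing $|D^2(RY_l)|^2$ and $|\nabla(RY_l)|^2$ into radial and tangential components.
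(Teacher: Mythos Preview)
Your proposal is correct and follows essentially the same two-stage strategy as the paper: a direct Bessel-recurrence computation showing $\lambda_{(2)} \geq 2\tau$, followed by the Chasman-type Rayleigh quotient monotonicity argument in $k = l(l+N-2)$ to get $\lambda_{(l)} \leq \lambda_{(l+1)}$ for $l \geq 2$. One small wording fix: the numerator is not a polynomial in $k$ with nonnegative coefficients (the term $k(k-N-1/2)$ has a negative linear coefficient), but rather an \emph{increasing function} of $k$ once $k \geq N+1/2$; and the trial function $R_{l+1}Y_l$ is admissible in the variational characterization of $\lambda_{(l)}$, not $\lambda_{(l+1)}$.
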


%%%%%%%%%%%%%%%%%%%%%%%%%%%%%%%%%%%%%%%%%%%%%%%%%%%%%%%%%%%%%%%%%%%%%%%%%%%%%%%%%%%%%%%%%%%%%%%%%%%%%%%%%%%%%%%%%%%%%%%%%%%%%%%%%%%%%%%%%%%%%%%%%%%%%%%%%%%%%%%%%%%%%%%%%
%%%%%%%%%%%%%%%%%%%%%%%%%%%%%%%%%%%%%%%%%%%%%%%%%%%%%%%%%%%%%%%%%%%%%%%%%%%%%%%%%%%%%%%%%%%%%%%%%%%%%%%%%%%%%%%%%%%%%%%%%%%%%%%%%%%%%%%%%%%%%%%%%%%%%%%%%%%%%%%%%%%%%%%%%
%%%%%%%%%%%%%%%%%%%%%%%%%%%%%%%%%%%%%%%%%%%%%%%%%%%%                       ISOPERIMETRIC INEQUALITY                  %%%%%%%%%%%%%%%%%%%%%%%%%%%%%%%%%%%%%%%%%%%%%%%%%%%% 
%%%%%%%%%%%%%%%%%%%%%%%%%%%%%%%%%%%%%%%%%%%%%%%%%%%%%%%%%%%%%%%%%%%%%%%%%%%%%%%%%%%%%%%%%%%%%%%%%%%%%%%%%%%%%%%%%%%%%%%%%%%%%%%%%%%%%%%%%%%%%%%%%%%%%%%%%%%%%%%%%%%%%%%%%
%%%%%%%%%%%%%%%%%%%%%%%%%%%%%%%%%%%%%%%%%%%%%%%%%%%%%%%%%%%%%%%%%%%%%%%%%%%%%%%%%%%%%%%%%%%%%%%%%%%%%%%%%%%%%%%%%%%%%%%%%%%%%%%%%%%%%%%%%%%%%%%%%%%%%%%%%%%%%%%%%%%%%%%%%

\subsection{The isoperimetric inequality}

In this section we prove the isoperimetric inequality (\ref{ISO}). Actually, we prove a stronger result, that is a quantitative version of \eqref{ISO}. We adapt to our case a result of \cite{bradepruf}, where the authors prove a quantitative version of the Brock-Weinstock inequality for the Steklov Laplacian. We also refer to \cite{hannad,mel} where these kind of questions have been considered for the first time (see also \cite{brapra,fumagpra}).

Throughout this section $\Omega$ is a bounded domain of class $C^1$.
%We recall the following lemma from \cite{betta}.
%\begin{lem}\label{isolem}
%Let $\Omega$ be an open set and let $f$ be a continuous, non-negative, non-decreasing function defined on $[0,+\infty)$. Let us assume  that the function
%\begin{equation}\label{ass1}
%t\mapsto\big(f(t^{1/N})-f(0)\big)t^{1-(1/N)}
%\end{equation}
%is convex. Then
%\begin{equation*}%\label{iso}
%\int_{\partial\Omega}f(|x|)d\sigma\geq\int_{\partial\Omega^*}f(|x|)d\sigma,
%\end{equation*}
%where $\Omega^*$ is the ball centered at zero with the same measure as $\Omega$.
%\end{lem}
%We observe that (\ref{ass1}) is satisfied for functions of the type $t^p$, with $p\geq 1$.
We recall the following lemma from \cite{bradepruf}.

\begin{lem}\label{isolem}
Let $\Omega$ be an open set with Lipschitz boundary and $p>1$. Then
\begin{equation*}
\int_{\partial\Omega}|x|^p d\sigma\geq\int_{\partial\Omega^*}|x|^p d\sigma \left(1+c_{N,p}\left(\frac{|\Omega\triangle\Omega^*|}{|\Omega|}\right)^2\right),
\end{equation*}
where $\Omega^*$ is the ball centered at zero with the same measure as $\Omega$, $\Omega\triangle\Omega^*$ is the symmetric difference of $\Omega$ and $\Omega^*$, and $c_{N,p}$ is a constant depending only on $N$ and $p$ given by
$$
c_{N,p}:=\frac{(N+p-1)(p-1)}{4}\frac{\sqrt[N]{2}-1}{N}\left(\min_{t\in[1,\sqrt[N]{2}]}t^{p-1}\right).
$$

\end{lem}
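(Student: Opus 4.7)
The plan is to first reduce the boundary integral to a volume integral via the divergence theorem, then compare the resulting volume integral with the one over $\Omega^*$ via a radial rearrangement argument, and finally establish a quadratic lower bound for a one-variable auxiliary function by elementary calculus.

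First, I would apply the divergence theorem to the vector field $F(x) = x|x|^{p-1}$. Since $\mathrm{div}\,F = (N+p-1)|x|^{p-1}$ and $x\cdot\nu \leq |x|$ pointwise on $\partial\Omega$, this yields
\begin{equation*}
(N+p-1)\int_\Omega |x|^{p-1}\, dx = \int_{\partial\Omega}(x\cdot\nu)|x|^{p-1}\, d\sigma \leq \int_{\partial\Omega}|x|^p\, d\sigma,
\end{equation*}
with equality when $\Omega = \Omega^* = B_R$ (since $x\cdot\nu = |x|$ on $\partial\Omega^*$). Subtracting the equality case reduces matters to a lower bound on
\begin{equation*}
\int_{\partial\Omega}|x|^p\, d\sigma - \int_{\partial\Omega^*}|x|^p\, d\sigma \;\geq\; (N+p-1)\left(\int_\Omega |x|^{p-1}\, dx - \int_{\Omega^*}|x|^{p-1}\, dx\right).
\end{equation*}

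Next I would exploit the monotonicity of $r\mapsto r^{p-1}$. Setting $m = |\Omega\setminus\Omega^*| = |\Omega^*\setminus\Omega| = \tfrac{1}{2}|\Omega\triangle\Omega^*|$ (equal because $|\Omega|=|\Omega^*|$), I introduce concentric radii $R_1<R<R_2$ by $R_1^N = R^N - m/\omega_N$ and $R_2^N = R^N + m/\omega_N$. Since $\Omega\setminus\Omega^*\subseteq\{|x|\geq R\}$, the minimizer of $\int_E |x|^{p-1}\,dx$ among sets $E\subseteq\{|x|\geq R\}$ of measure $m$ is the annulus $B_{R_2}\setminus B_R$; symmetrically, $B_R\setminus B_{R_1}$ maximizes $\int_E |x|^{p-1}\,dx$ among sets $E\subseteq\{|x|\leq R\}$ of measure $m$. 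Combining these two one-sided rearrangement inequalities and evaluating the annular integrals explicitly gives
\begin{equation*}
\int_\Omega |x|^{p-1}\, dx - \int_{\Omega^*}|x|^{p-1}\, dx \;\geq\; \frac{N\omega_N R^{N+p-1}}{N+p-1}\,\phi(\alpha/2),
\end{equation*}
where $\alpha = |\Omega\triangle\Omega^*|/|\Omega|$ and $\phi(h):=(1+h)^{(N+p-1)/N} + (1-h)^{(N+p-1)/N} - 2$.

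Combining the two steps with $\int_{\partial\Omega^*}|x|^p\, d\sigma = N\omega_N R^{N+p-1}$ reduces the claim to the one-variable inequality $\phi(h) \geq 4 c_{N,p}\, h^2$ for $h\in[0,1]$. One has $\phi(0)=\phi'(0)=0$ and
\begin{equation*}
\phi''(h) = \frac{(N+p-1)(p-1)}{N^2}\bigl[(1+h)^{(p-1-N)/N} + (1-h)^{(p-1-N)/N}\bigr] > 0,
\end{equation*}
so the representation $\phi(h) = \int_0^h (h-s)\phi''(s)\,ds$ delivers the quadratic control. To pin down the sharp constant I would use the chord bound $(1+h)^{1/N}-1\geq(\sqrt[N]{2}-1)\,h$ on $[0,1]$ (coming from concavity of $s\mapsto(1+s)^{1/N}$, with equality at the endpoints $0,1$), together with the identity $t^{p-2} = t^{p-1}/t$ applied to the inner integral after the substitution $t=(1+s)^{1/N}\in[1,\sqrt[N]{2}]$; this is what produces simultaneously the factor $\min_{t\in[1,\sqrt[N]{2}]} t^{p-1}$ and the length factor $(\sqrt[N]{2}-1)/N$ appearing in $c_{N,p}$. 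The hard part will be carefully tracking constants through this last one-variable estimate to recover exactly the stated form of $c_{N,p}$; the qualitative bound $\phi(h)\geq C(N,p)\,h^2$ is already immediate from $\phi''(0)>0$ and continuity.
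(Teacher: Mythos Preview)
The paper does not prove this lemma at all: it is simply quoted from Brasco--De~Philippis--Ruffini \cite{bradepruf} with the introductory phrase ``We recall the following lemma from \cite{bradepruf}'', and no argument is given. So there is no proof in the paper to compare against.

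That said, your proposal is essentially the proof given in the cited source, and each step is sound. The divergence theorem applied to $x|x|^{p-1}$ correctly reduces the boundary integral to $(N+p-1)\int_\Omega |x|^{p-1}\,dx$; the two one-sided rearrangements (packing $\Omega\setminus\Omega^*$ into the inner annulus $B_{R_2}\setminus B_R$ and $\Omega^*\setminus\Omega$ into the outer annulus $B_R\setminus B_{R_1}$) are the right moves and lead exactly to the function $\phi(h)=(1+h)^{(N+p-1)/N}+(1-h)^{(N+p-1)/N}-2$ with $h=\alpha/2$; and the Taylor representation $\phi(h)=\int_0^h(h-s)\phi''(s)\,ds$ together with $\phi''>0$ gives the quadratic lower bound qualitatively. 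Your honesty about the constant-tracking being the only delicate point is appropriate: the argument you sketch (chord inequality for $(1+s)^{1/N}$ and the substitution $t=(1+s)^{1/N}$) is indeed how one recovers the factor $(\sqrt[N]{2}-1)/N$, though the mean-value step naturally produces $t^{p-2}$ rather than $t^{p-1}$ in the integrand, so matching the stated constant literally requires a little extra care. Note, incidentally, that under the hypothesis $p>1$ one has $\min_{t\in[1,\sqrt[N]{2}]}t^{p-1}=1$, so that factor in the stated $c_{N,p}$ is redundant here; it presumably survives from a more general formulation in \cite{bradepruf}.
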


We also recall the following characterization of the inverses of the eigenvalues of (\ref{Steklov-Bi}) from  \cite{hile} (see also \cite{bandle}).

\begin{lem}\label{lemmavar}
Let $\Omega$ be a bounded domain of class $C^1$ in $\mathbb R^N$. Then the eigenvalues of problem (\ref{Steklov-Bi}) on $\Omega$ satisfy,
\begin{equation}\label{variational2}
\sum_{l=k+1}^{k+N}\frac{1}{\lambda_l(\Omega)}=\max\Bigg\{\sum_{l=k+1}^{k+N}\int_{\partial\Omega}v_l^2d\sigma\Bigg\},
\end{equation}
where the maximum is taken over the families $\{v_l\}_{l=k+1}^{k+N}$ in $H^2(\Omega)$ satisfying $\int_{\Omega}D^2v_i:D^2v_j+\tau\nabla v_i\cdot\nabla v_j dx=\delta_{ij}$, and $\int_{\partial\Omega}v_i u_jd\sigma=0$ for all $i=k+1,...,k+N$ and $j=1,2,...,k$, where $u_1,u_2,...,u_k$ are the first $k$ eigenfunctions of problem (\ref{Steklov-Bi}).
\end{lem}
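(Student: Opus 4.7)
The strategy is to reformulate the variational characterization in terms of the compact self-adjoint operator $T^{\mathcal S}_1$ on $H^2(\Omega)/\mathbb R$ introduced in Section~\ref{sec:3}, and then invoke Ky Fan's maximum principle for sums of eigenvalues. Recall that the positive eigenvalues $\lambda_l$ of problem (\ref{Steklov-Bi}) are precisely the reciprocals of the (nonzero) eigenvalues of $T^{\mathcal S}_1$, and that an orthonormal basis of eigenvectors of $T^{\mathcal S}_1$ in $H^2(\Omega)/\mathbb R$ is given by the classes $p[u_l]$ of the Steklov eigenfunctions $u_l$ for $l\geq 2$, where the top eigenvalues are $\lambda_2^{-1}\geq\lambda_3^{-1}\geq\dots$.

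The first step is to rewrite the boundary integrals as inner products. Since $u_1$ is constant, the orthogonality condition $\int_{\partial\Omega}v_l u_1\, d\sigma=0$ places $v_l$ in $H^{2,\mathcal S}_1(\Omega)$, so $\pi^{\sharp,\mathcal S}_1 p[v_l]=v_l$, and the same manipulation used in the proof of Theorem~\ref{comsa} yields
$$\langle T^{\mathcal S}_1 p[v_l], p[v_l]\rangle_{H^2(\Omega)/\mathbb R} = \int_{\partial\Omega} v_l^2\, d\sigma.$$
Moreover the assumed condition $\langle v_i,v_j\rangle = \delta_{ij}$ is exactly orthonormality of $\{p[v_l]\}$ in $H^2(\Omega)/\mathbb R$. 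Next, for each $j=2,\dots,k$ the function $u_j$ is an eigenfunction with $\lambda_j>0$; testing the weak formulation (\ref{steklovweak}) (with $\rho\equiv 1$) for $u_j$ against $v_l$ yields
$$\langle u_j, v_l\rangle = \lambda_j\int_{\partial\Omega} u_j v_l\, d\sigma,$$
so the $L^2(\partial\Omega)$-orthogonality $\int_{\partial\Omega} u_j v_l\, d\sigma=0$ is equivalent to the $H^2(\Omega)/\mathbb R$-orthogonality $\langle p[u_j],p[v_l]\rangle=0$. Consequently, the admissible classes $p[v_l]$ are exactly the $H^2(\Omega)/\mathbb R$-orthonormal families lying in the closed invariant subspace $V\subset H^2(\Omega)/\mathbb R$ orthogonal to $\{p[u_2],\dots,p[u_k]\}$.

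The final step is to apply Ky Fan's maximum principle to $T^{\mathcal S}_1|_V$. Since $p[u_2],\dots,p[u_k]$ can be chosen as an $H^2(\Omega)/\mathbb R$-orthonormal basis spanning the top $k-1$ eigenspaces of $T^{\mathcal S}_1$, the restriction $T^{\mathcal S}_1|_V$ is compact self-adjoint with top $N$ eigenvalues $\lambda_{k+1}^{-1},\dots,\lambda_{k+N}^{-1}$; Ky Fan's theorem then gives
$$\sum_{l=k+1}^{k+N}\frac{1}{\lambda_l} \;=\; \max \sum_{l=1}^{N}\langle T^{\mathcal S}_1 e_l,e_l\rangle_{H^2(\Omega)/\mathbb R},$$
the maximum being taken over $H^2(\Omega)/\mathbb R$-orthonormal families $\{e_l\}\subset V$, which via the identifications above is exactly (\ref{variational2}). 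The only mild point of care is verifying that when eigenvalues $\lambda_j$ have multiplicities one may still choose $u_2,\dots,u_k$ so that their classes form an orthogonal family in $H^2(\Omega)/\mathbb R$ exhausting the top $k-1$ eigenspaces of $T^{\mathcal S}_1$; this is immediate from the spectral theorem applied to the compact self-adjoint operator $T^{\mathcal S}_1$.
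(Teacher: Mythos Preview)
Your argument is correct. The reformulation in terms of the compact self-adjoint operator $T^{\mathcal S}_1$ is clean: the identity $\langle T^{\mathcal S}_1 p[v_l],p[v_l]\rangle=\int_{\partial\Omega}v_l^2\,d\sigma$ follows exactly as in the proof of Theorem~\ref{comsa}, the equivalence between $L^2(\partial\Omega)$-orthogonality to $u_j$ and $H^2(\Omega)/\mathbb R$-orthogonality to $p[u_j]$ for $j\geq 2$ is a one-line consequence of the weak formulation, and Ky Fan's maximum principle then delivers the result. The handling of multiplicities at the end is adequate; note also that even when $\lambda_k=\lambda_{k+1}$ the identity is insensitive to the particular choice of $u_k$ inside the common eigenspace, since the top eigenvalues of $T^{\mathcal S}_1|_V$ are unchanged.

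By way of comparison, the paper does not actually prove this lemma: it is stated as a recollection of a general result from \cite{hile} (see also \cite{bandle}), which treats such trace-type variational characterizations for reciprocal eigenvalue sums in an abstract setting. Your route is therefore genuinely different in presentation: rather than invoking the external reference, you exploit the operator-theoretic machinery already built in Section~\ref{sec:3} to make the proof self-contained within the paper. The payoff is that no outside result beyond Ky Fan's principle (itself elementary for compact self-adjoint operators) is needed, and the argument makes transparent why the bilinear form~(\ref{bilinear-form}) and the $L^2(\partial\Omega)$ pairing interact exactly as required.
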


For every open set $\Omega\in\mathbb R^N$ with finite measure, we recall the definition of Fraenkel asymmetry

\begin{equation*}
\mathcal A(\Omega):=\inf\left\{\frac{\|\chi_{\Omega}-\chi_{B}\|_{L^1(\mathbb R^N)}}{|\Omega|}\,:\,B\ {\rm ball\ with\ }|B|=|\Omega|\right\}.
\end{equation*}
The quantity $\mathcal A(\Omega)$ is the distance in the $L^1(\mathbb R^N)$ norm of a set $\Omega$ from the set of all balls of the same measure as $\Omega$. This quantity turns out to be a suitable distance between sets for the purposes of stability estimates of eigenvalues. Note that $\mathcal A(\Omega)$ is scaling invariant and $0\leq\mathcal A(\Omega)<2$. 
 
We are ready to prove the following

\begin{thm}\label{isoperimetrictheorem}
For every domain $\Omega$ in $\mathbb R^N$ of class $C^1$ the following estimate holds
\begin{equation}\label{quantitative}
\lambda_2(\Omega)\leq\lambda_2(\Omega^*)\left(1-\delta_N \mathcal A(\Omega)^2\right),
\end{equation}
where $\delta_N$ is given by
$$
\delta_N:=\frac{N+1}{8N}\left(\sqrt[N]{2}-1\right),
$$
and $\Omega^*$ is a ball with the same measure as $\Omega$.
\proof
Let $\Omega$ be a bounded domain of class $C^1$ in $\mathbb R^N$ with the same measure as the unit ball $B$. We consider in (\ref{variational2}) $l=2,...,N+1$ and $v_l=(\tau|\Omega|)^{-1/2}x_l$ as trial functions. The trial functions must have zero integral mean over $\partial\Omega$. This can be obtained by a change of coordinates $x=y-\frac{1}{|\partial\Omega|}\int_{\partial\Omega}y d\sigma$. Moreover, the functions $v_l$ satisfy the normalization condition of Lemma \ref{lemmavar}. Then $v_l$ are suitable trial functions to test in formula (\ref{variational2}). We get

$$
\sum_{l=2}^{N+1}\frac{1}{\lambda_l(\Omega)}\geq\frac{1}{\tau|\Omega|}\int_{\partial\Omega}|x|^2d\sigma.
$$
We use Lemma \ref{isolem} with $p=2$. This yields
\begin{multline*}
\sum_{l=2}^{N+1}\frac{1}{\lambda_l(\Omega)}\geq\frac{1}{\tau|\Omega|}\int_{\partial B}|x|^2d\sigma \left(1+c_{N,2}\left(\frac{|\Omega\triangle B|}{|\Omega|}\right)^2\right)\\
=\frac{N|B|}{\tau|B|}\left(1+c_{N,2}\left(\frac{|\Omega\triangle B|}{|\Omega|}\right)^2\right)=\frac{N}{\tau}\left(1+c_{N,2}\left(\frac{|\Omega\triangle B|}{|\Omega|}\right)^2\right)\\
=\sum_{l=2}^{N+1}\frac{1}{\lambda_l(B)}\left(1+c_{N,2}\left(\frac{|\Omega\triangle B|}{|\Omega|}\right)^2\right).
\end{multline*}

Suppose now that $\lambda_2(\Omega)\geq\frac{\tau}{2}$, otherwise estimate \eqref{quantitative} is trivially true, since $0\leq\mathcal A(\Omega)<2$. Since $\lambda_2(\Omega)\leq\lambda_l(\Omega)$ for all $l\geq 3$, the previous inequality and the definition of $\mathcal A(\Omega)$ yield
$$
\lambda_2(\Omega)\left(1+c_{N,2}\mathcal A(\Omega)^2\right)\leq\lambda_2(B).
$$
This implies \eqref{quantitative} with $\delta_N=\frac1 8\min\{1,\frac{N+1} N(\sqrt[N]{2}-1)\}$. Note that
$\min\{1,\frac{N+1} N(\sqrt[N]{2}-1)\}=\frac{N+1} N(\sqrt[N]{2}-1)$. This concludes the proof in the case $\Omega$ has the same measure as the unit ball.

The proof for general finite values of $|\Omega|$ relies on the well-known scaling properties of the eigenvalues. Namely, for all $\alpha>0$, if we write an eigenvalue of problem (\ref{Steklov-Bi}) as $\lambda(\tau,\Omega)$, we have
$$
\lambda(\tau,\Omega)=\alpha^3\lambda(\alpha^{-2}\tau,\alpha\Omega).
$$
This is easy to prove by looking at the variational characterization of $\lambda(\tau,\Omega)$ and $\lambda(\alpha^{-2}\tau,\alpha\Omega)$ and performing a change of variable $x\mapsto x/{\alpha}$ in the Rayleigh quotient (\ref{minmax}). This last observation concludes the proof of the theorem.
\endproof
\end{thm}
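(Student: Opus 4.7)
The plan is to adapt the Brock--Weinstock scheme to the fourth order Steklov setting, using the variational characterization in Lemma \ref{lemmavar} together with the weighted isoperimetric inequality of Lemma \ref{isolem}. Throughout I would reduce first to the case $|\Omega|=|B|$ (with $B$ the unit ball) and recover the general case at the end by scaling.

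First I would translate $\Omega$ so that $\int_{\partial\Omega}x_l\,d\sigma=0$ for $l=1,\dots,N$; this is legitimate because problem (\ref{Steklov-Bi}) and the Fraenkel asymmetry are translation invariant. Then I would choose the trial functions $v_l:=(\tau|\Omega|)^{-1/2}x_l$ for $l=2,\dots,N+1$. Since $D^2x_l=0$ and $\nabla x_l=e_l$, a direct computation gives $\int_\Omega D^2v_i{:}D^2v_j+\tau\nabla v_i\cdot\nabla v_j\,dx=\delta_{ij}$, so the $v_l$ are admissible in (\ref{variational2}) with $k=1$, the orthogonality condition against the first eigenfunction (a constant) reducing to the vanishing boundary integrals ensured by the translation. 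Applying Lemma \ref{lemmavar} yields
\[
\sum_{l=2}^{N+1}\frac{1}{\lambda_l(\Omega)}\ \ge\ \frac{1}{\tau|\Omega|}\int_{\partial\Omega}|x|^2\,d\sigma.
\]

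Next I would invoke Lemma \ref{isolem} with $p=2$ to bound the boundary integral from below by $\int_{\partial\Omega^*}|x|^2d\sigma\bigl(1+c_{N,2}(|\Omega\triangle\Omega^*|/|\Omega|)^2\bigr)$. Using that on the unit ball $B=\Omega^*$ the first $N$ positive eigenvalues all equal $\tau$ (so that $\sum_{l=2}^{N+1}1/\lambda_l(B)=N/\tau$) and that $\int_{\partial B}|x|^2d\sigma=N|B|$, the two displays combine into
\[
\sum_{l=2}^{N+1}\frac{1}{\lambda_l(\Omega)}\ \ge\ \sum_{l=2}^{N+1}\frac{1}{\lambda_l(B)}\bigl(1+c_{N,2}\mathcal A(\Omega)^2\bigr).
\]
Since $\lambda_2(\Omega)\le\lambda_l(\Omega)$ for $l=3,\dots,N+1$, the left-hand side is dominated by $N/\lambda_2(\Omega)$, and one obtains $\lambda_2(\Omega)(1+c_{N,2}\mathcal A(\Omega)^2)\le\lambda_2(B)$. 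Rearranging gives (\ref{quantitative}) with constant $c_{N,2}$; a short bookkeeping identification of $c_{N,2}$ from Lemma \ref{isolem}, together with the trivial bound in the regime $\lambda_2(\Omega)<\tau/2$ (where $\mathcal A(\Omega)<2$ makes (\ref{quantitative}) automatic), pins the final constant down to $\delta_N=\frac{N+1}{8N}(\sqrt[N]{2}-1)$.

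Finally, to remove the normalization $|\Omega|=|B|$, I would exploit the scaling relation $\lambda(\tau,\Omega)=\alpha^3\lambda(\alpha^{-2}\tau,\alpha\Omega)$, which follows from a change of variables $x\mapsto x/\alpha$ in the Rayleigh quotient (\ref{minmax}). Choosing $\alpha$ so that $\alpha\Omega$ has unit-ball volume and applying the already established inequality to $\alpha\Omega$ with parameter $\alpha^{-2}\tau$ reproduces (\ref{quantitative}) for $\Omega$, because both $\mathcal A$ and the ratio $\lambda_2(\Omega)/\lambda_2(\Omega^*)$ are scale invariant. I expect the main subtlety to be verifying that the coordinate functions $v_l$ can legitimately be used in Lemma \ref{lemmavar} (namely, checking the orthogonality to the first eigenfunction via the translation and ensuring the $H^2$-orthonormalization $\delta_{ij}$ is really achieved for $i\neq j$ with $i,j\in\{2,\dots,N+1\}$, which holds here only because $D^2x_l=0$ so the Hessian cross terms vanish); the rest is essentially plugging Lemma \ref{isolem} into this variational inequality and tracking constants.
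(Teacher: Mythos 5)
Your proposal is correct and follows essentially the same route as the paper's proof: the same trial functions $v_l=(\tau|\Omega|)^{-1/2}x_l$ tested in the Hile--Xu type characterization of Lemma \ref{lemmavar}, the same weighted isoperimetric inequality (Lemma \ref{isolem} with $p=2$), the same reduction via $\lambda_2(\Omega)\le\lambda_l(\Omega)$ and the case split at $\lambda_2(\Omega)\ge\tau/2$, and the same scaling argument to remove the volume normalization. The subtleties you flag (orthogonality to the constant eigenfunction via translation, and the vanishing Hessian making the normalization exact) are precisely the points the paper also addresses.
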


The isoperimetric inequality \ref{ISO} is an immediate consequence of Theorem \ref{isoperimetrictheorem}.

\begin{corol}
Among all bounded domains of class $C^1$ with fixed measure, the ball maximizes the first non-negative eigenvalue of problem (\ref{Steklov-Bi}), that is $\lambda_2(\Omega)\leq\lambda_2(\Omega^*)$, where $\lambda_2(\Omega)$ has been defined in (\ref{minmax}) and $\Omega^*$ is a ball with the same measure as $\Omega$.
\end{corol}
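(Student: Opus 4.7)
The plan is to obtain this corollary as an immediate consequence of Theorem \ref{isoperimetrictheorem}. That theorem gives the quantitative bound
\[
\lambda_2(\Omega)\le\lambda_2(\Omega^*)\bigl(1-\delta_N\,\mathcal{A}(\Omega)^2\bigr),
\]
with $\delta_N=\tfrac{N+1}{8N}(\sqrt[N]{2}-1)>0$ for every $N\ge 2$, and since $\mathcal{A}(\Omega)^2\ge 0$ the factor $1-\delta_N\,\mathcal{A}(\Omega)^2$ is at most $1$. Hence $\lambda_2(\Omega)\le\lambda_2(\Omega^*)$ at once; no further argument is needed.

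For transparency, I would also point out the direct (non-quantitative) route, which already lives inside the proof of Theorem \ref{isoperimetrictheorem} and could in principle be used as a standalone proof of the corollary. One applies Lemma \ref{lemmavar} with $k=1$ and the test functions $v_l=(\tau|\Omega|)^{-1/2}\,x_l$ for $l=2,\dots,N+1$, after first translating $\Omega$ so that $\int_{\partial\Omega}x_l\,d\sigma=0$, which provides $L^2(\partial\Omega)$-orthogonality to the constant first eigenfunction. Since $D^2 v_l\equiv 0$ and $\nabla v_i\cdot\nabla v_j=(\tau|\Omega|)^{-1}\delta_{ij}$, the normalization required by the lemma is satisfied, and one gets
\[
\sum_{l=2}^{N+1}\frac{1}{\lambda_l(\Omega)}\ge\frac{1}{\tau|\Omega|}\int_{\partial\Omega}|x|^2\,d\sigma\ge\frac{1}{\tau|\Omega|}\int_{\partial\Omega^*}|x|^2\,d\sigma,
\]
where the second inequality is the qualitative version of Lemma \ref{isolem} with $p=2$.

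On $\Omega^*$ the second eigenvalue has multiplicity exactly $N$, with eigenspace spanned by the coordinate functions (as shown in the analysis of the ball in the previous subsection), so $\sum_{l=2}^{N+1}1/\lambda_l(\Omega^*)=N/\lambda_2(\Omega^*)$ and a one-line computation identifies this with $\frac{1}{\tau|\Omega|}\int_{\partial\Omega^*}|x|^2\,d\sigma$. Combining with the trivial bound $\sum_{l=2}^{N+1}1/\lambda_l(\Omega)\le N/\lambda_2(\Omega)$, which uses $\lambda_2\le\lambda_l$ for $l\ge 3$, one obtains $\lambda_2(\Omega)\le\lambda_2(\Omega^*)$. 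The only mildly technical points in this direct path are the verification of the hypotheses of Lemma \ref{lemmavar} for the $v_l$ and, when $|\Omega|\ne|B_1|$, the reduction to the unit-measure case via the scaling identity $\lambda(\tau,\Omega)=\alpha^3\lambda(\alpha^{-2}\tau,\alpha\Omega)$ recorded at the end of the proof of Theorem \ref{isoperimetrictheorem}; neither is a real obstacle.
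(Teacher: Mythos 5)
Your proof is correct and follows the same route as the paper: the corollary is deduced immediately from the quantitative inequality of Theorem \ref{isoperimetrictheorem}, since $\delta_N>0$ and $\mathcal A(\Omega)^2\ge 0$ force the factor $1-\delta_N\mathcal A(\Omega)^2$ to be at most $1$. The additional direct argument you sketch is essentially the interior of the paper's proof of that theorem, so nothing further is needed.
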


\begin{rem}
In \cite{bradepruf} the authors prove that the quantitative version of the Brock-Weinstock inequality that they find is sharp. We think that it would be of interest to consider the problem of the sharpness of inequality \ref{quantitative} as well. Unfortunately, the results of \cite{bradepruf} do not apply immediately to our case. Also, we do not discuss the sharpness here since we think it is out of the purposes of the present paper. Such a discussion will be part of a future work.
\end{rem}	

%%%%%%%%%%%%%%%%%%%%%%%%%%%%%%%%%%%%%%%%%%%%%%%%%%%%%%%%%%%%%%%%%%%%%%%%%%%%%%%%%%%%%%%%%%%%%%%%%%%%%%%%%%%%%%%%%%%%%%%%%%%%%%%%%%%%%%%%%%%%%%%%%%%%%%%%%%%%%%%%%%%%%%%%%%%%%%%%%%
%%%%%%%%%%%%%%%%%%%%%%%%%%%%%%%%%%%%%%%%%%%%%%%%%%%%%%                                                                      %%%%%%%%%%%%%%%%%%%%%%%%%%%%%%%%%%%%%%%%%%%%%%%%%%%%%%
%%%%%%%%%%%%%%%%%%%%%%%%%%%%%%%%%%%%%%%%%%%%%%%%%%%%%%                    CONCLUDING REMARKS                                 %%%%%%%%%%%%%%%%%%%%%%%%%%%%%%%%%%%%%%%%%%%%%%%%%%%%%%
%%%%%%%%%%%%%%%%%%%%%%%%%%%%%%%%%%%%%%%%%%%%%%%%%%%%%%                                                                      %%%%%%%%%%%%%%%%%%%%%%%%%%%%%%%%%%%%%%%%%%%%%%%%%%%%%%
%%%%%%%%%%%%%%%%%%%%%%%%%%%%%%%%%%%%%%%%%%%%%%%%%%%%%%%%%%%%%%%%%%%%%%%%%%%%%%%%%%%%%%%%%%%%%%%%%%%%%%%%%%%%%%%%%%%%%%%%%%%%%%%%%%%%%%%%%%%%%%%%%%%%%%%%%%%%%%%%%%%%%%%%%%%%%%%%%%

\section{Concluding remarks}
\label{sec:6}

Throughout this paper we have only considered problems (\ref{Steklov-Bi}) and (\ref{neumannweak}) with  $\tau>0$. If we set $\tau=0$, problem (\ref{Steklov-Bi}) reads
\begin{equation}\label{Steklov-Bi-2}
\left\{\begin{array}{ll}
\Delta^2 u =0 ,\ \ & {\rm in}\  \Omega,\\
\frac{\partial^2 u}{\partial \nu^2 }=0 ,\ \ & {\rm on}\  \partial \Omega,\\
-{\rm div_{\partial\Omega}}\big(D^2u.\nu\big)-\frac{\partial\Delta u}{\partial\nu}=\lambda u ,\ \ & {\rm on}\  \partial \Omega,
\end{array}\right.
\end{equation}
while problem (\ref{neumannweak}) reads
\begin{equation}\label{Neumann-Bi-2}
\left\{\begin{array}{ll}
\Delta^2 u = \lambda u,\ \ & {\rm in}\  \Omega,\\
\frac{\partial^2 u}{\partial \nu^2 }=0 ,\ \ & {\rm on}\  \partial \Omega,\\
{\rm div_{\partial\Omega}}\big(D^2u.\nu\big)+\frac{\partial\Delta u}{\partial\nu}=0 ,\ \ & {\rm on}\  \partial \Omega.
\end{array}\right.
\end{equation}

Problems (\ref{Steklov-Bi-2}) and (\ref{Neumann-Bi-2}) model free vibrating plates which are not subject to lateral tension. These problems  have a sequence of non-negative eigenvalues of finite multiplicity and the corresponding eigenfunctions form a orthonormal basis of $H^2(\Omega)$. The coordinate functions $x_1,...,x_N$ and the constants are eigenfunctions of both problems (\ref{Steklov-Bi-2}) and (\ref{Neumann-Bi-2}) corresponding to the eigenvalue $\lambda=0$, which has multiplicity $N+1$. Therefore, the first non-zero eigenvalue is the $(N+2)$-th eigenvalue.  

As we did in Theorem \ref{convergenzacompattaultimo}, we can define the family of problems
\begin{equation}\label{Neumann-Bi-2-eps}
\left\{\begin{array}{ll}
\Delta^2 u  = \lambda\rho_{\varepsilon} u,\ \ & {\rm in}\  \Omega,\\
\frac{\partial^2 u}{\partial \nu^2 }=0 ,\ \ & {\rm on}\  \partial \Omega,\\
{\rm div_{\partial\Omega}}\big(D^2u.\nu\big)+\frac{\partial\Delta u}{\partial\nu}=0 ,\ \ & {\rm on}\  \partial \Omega,
\end{array}\right.
\end{equation}
where $\rho_{\varepsilon}$ is defined as in (\ref{rhoeps}). We have the following theorem, whose proof can be easily done adapting that of Theorem \ref{convergenzacompattaultimo}.

\begin{thm}%\label{convergenzacompatta2}
Let $\Omega$ be a bounded domain in $\mathbb{R}^N$ of class $C^2$. Let $\rho_{\varepsilon}$ be defined as in (\ref{rhoeps}). Let $\lambda_j(\rho_{\varepsilon})$ be the eigenvalues of problem (\ref{Neumann-Bi-2-eps}) on $\Omega$ for all $j\in\mathbb N$. Let $\lambda_j$, $j\in\mathbb N$ denote the eigenvalues of problem (\ref{Steklov-Bi-2}) corresponding to the constant surface density $\frac{M}{|\partial\Omega|}$. Then we have $\lim_{\varepsilon\rightarrow 0}\lambda_j(\rho_{\varepsilon})=\lambda_j$ for all $j\in\mathbb N$.
\end{thm}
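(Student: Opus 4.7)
The plan is to adapt the proof of Theorem \ref{convergenzacompattaultimo}, the essential modification being that the natural functional setting is now $H^{2}(\Omega)/\mathcal P_{1}$ rather than $H^{2}(\Omega)/\mathbb R$, where $\mathcal P_{1}$ denotes the $(N+1)$-dimensional space of polynomials of degree at most $1$. This is dictated by the fact that for $\tau=0$ the eigenspace corresponding to $\lambda=0$ of both problems (\ref{Steklov-Bi-2}) and (\ref{Neumann-Bi-2-eps}) is spanned by $\{1,x_{1},\dots,x_{N}\}$, so one must factor out this larger kernel in order to obtain a well-posed resolvent.

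The first task is to verify that the bilinear form $a(u,v):=\int_{\Omega}D^{2}u:D^{2}v\,dx$ descends to a scalar product on $H^{2}(\Omega)/\mathcal P_{1}$ whose induced norm is equivalent to the quotient norm. This rests on the Poincaré--Wirtinger-type inequality
\[
\inf_{p\in\mathcal P_{1}}\|u-p\|_{H^{2}(\Omega)}\leq C\,\|D^{2}u\|_{L^{2}(\Omega)},\qquad\forall u\in H^{2}(\Omega),
\]
which follows by the usual contradiction-compactness argument based on Rellich's theorem applied to $\nabla u$, together with the observation that $D^{2}u\equiv 0$ forces $u\in\mathcal P_{1}$. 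Armed with this, I would reintroduce the operators of Section \ref{sec:3}, now with the projections $\pi^{\mathcal N}_{\rho_{\varepsilon}}$ and $\pi^{\mathcal S}_{1}$ modified so as to annihilate the whole of $\mathcal P_{1}$ via the $N+1$ moment conditions $\int_{\Omega}\rho_{\varepsilon}uq\,dx=0$ and $\int_{\partial\Omega}uq\,d\sigma=0$, respectively, for $q\in\{1,x_{1},\dots,x_{N}\}$. The corresponding resolvent operators $T^{\mathcal N}_{\rho_{\varepsilon}}$ and $T^{\mathcal S}_{M/|\partial\Omega|}$ on $H^{2}(\Omega)/\mathcal P_{1}$ are then compact and selfadjoint, with eigenvalues equal to the reciprocals of the positive eigenvalues of their respective problems, by the same reasoning as in Theorem \ref{comsa}.

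The second task is to adapt Lemma \ref{lemma_neumann_to_steklov} to the new projections. Items (i) and (ii) go through once we check that the additional moments $\int_{\Omega}\rho_{\varepsilon}u\,x_{i}\,dx$ converge to $\frac{M}{|\partial\Omega|}\int_{\partial\Omega}u\,x_{i}\,d\sigma$ as $\varepsilon\to 0$, which is a routine adaptation of the tubular-neighbourhood computations in \cite{proz}. Item (iii) transfers verbatim, since its proof uses only the uniform $L^{2}$-bound on $\nabla u_{\varepsilon}$ and $\nabla w_{\varepsilon}$, and this bound persists when the quotient norm is taken to be $\|D^{2}\cdot\|_{L^{2}(\Omega)}$, for a suitable choice of representatives. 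With these preliminaries, the compact convergence $T^{\mathcal N}_{\rho_{\varepsilon}}\to T^{\mathcal S}_{M/|\partial\Omega|}$ follows from precisely the two-step verification (boundedness implies relative compactness of the image; strong convergence of inputs implies strong convergence of outputs) carried out in Theorem \ref{convergenzacompattaultimo}; a final appeal to Theorem \ref{convergselfadjoint} then yields $\lim_{\varepsilon\to 0}\lambda_{j}(\rho_{\varepsilon})=\lambda_{j}$ for every $j\in\mathbb N$.

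The only genuinely new ingredient, and hence the main (if modest) obstacle, is the initial functional-analytic setup: one must choose consistent representatives for the quotient by $\mathcal P_{1}$ and establish the equivalent-norm inequality displayed above. Once this is done the enlarged kernel causes no further complications, and the rest of the argument is a direct transcription of the $\tau>0$ case.
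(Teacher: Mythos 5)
Your proposal is correct and follows exactly the route the paper intends: the paper gives no detailed proof, stating only that the argument of Theorem \ref{convergenzacompattaultimo} adapts "by means of a change of the projections \dots according to the kernel", and your passage to $H^{2}(\Omega)/\mathcal P_{1}$ with the modified moment conditions and the Poincar\'e--Wirtinger inequality for $\|D^{2}u\|_{L^{2}(\Omega)}$ is precisely the adaptation being alluded to. No gaps; the remaining steps (compact convergence and the appeal to Theorem \ref{convergselfadjoint}) transfer as you describe.
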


It is clear that a discussion similar to that of Section \ref{sec:4} can be carried out for problems (\ref{Steklov-Bi-2}) and (\ref{Neumann-Bi-2}) as well, by means of a change of the projections $\pi_{\phi}^{\mathcal S},\pi_{\phi}^{\mathcal N}$ according to the kernel. In particular, all the formulas in Section \ref{sec:4} remain true, by setting $\tau=0$. Then we have the following

\begin{thm}
	Let $\Omega$ be a domain in $\mathbb{R}^N$. Let $\tilde{\phi}\in\Phi(\Omega)$ be such that
	$\tilde{\phi}(\Omega)$ is a ball. Let $\tilde{\lambda}$ be an eigenvalue of problem (\ref{Steklov-Bi-2}) (problem (\ref{Neumann-Bi-2}) respectively) in $\tilde{\phi}(\Omega)$,
	and let $F$ be the set of $j\in\mathbb{N}$ such that $\lambda_j[\tilde{\phi}]=\tilde{\lambda}$.
	Then $\Lambda_{F,s}$ has a critical point at $\tilde{\phi}$ on $V(\mathcal{V}(\tilde{\phi}))$,
	for all $s=1,\dots,|F|$.
\end{thm}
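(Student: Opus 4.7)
The plan is to mirror the $\tau>0$ case handled earlier in Section \ref{sec:4}, making the necessary structural adjustments to cope with the enlarged kernel. For $\tau=0$ the eigenvalue $\lambda=0$ has multiplicity $N+1$ for both problems (\ref{Steklov-Bi-2}) and (\ref{Neumann-Bi-2}), since the constants and the coordinate functions $x_1,\dots,x_N$ all lie in the kernel. Accordingly, I would redefine the functional-analytic setting by replacing the quotient $H^2(\Omega)/\mathbb{R}$ with $H^2(\Omega)/\mathcal{K}$, where $\mathcal{K}:=\mathrm{span}\{1,x_1,\dots,x_N\}$, and modify the projections $\pi^{\mathcal{S}}_\phi$, $\pi^{\mathcal{N}}_\phi$ so that their images are $L^2$-orthogonal to $\mathcal{K}$ on $\partial\phi(\Omega)$ (Steklov case) and on $\phi(\Omega)$ (Neumann case). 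With these modifications in place, the bilinear form $\int_\Omega D^2u:D^2v\,dx$ is still coercive on the new reference space by a variant of the Poincaré--Wirtinger inequality, and the analogues of Theorems \ref{comsa}, \ref{thesame} go through verbatim.

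Next I would re-derive the Hadamard-type formulas by tracing the argument of Theorem \ref{duesette} (and its Neumann counterpart) with $\tau=0$. The fundamental identity (\ref{formulafondamentale}) remains valid (all $\tau$-terms simply vanish), so the differential of $\Lambda_{F,s}$ at $\tilde{\phi}$ takes the form
\begin{equation*}
d|_{\phi=\tilde{\phi}}(\Lambda_{F,s})[\psi]=-\lambda_F^{s-1}\binom{|F|-1}{s-1}\sum_{l=1}^{|F|}\int_{\partial\tilde{\phi}(\Omega)}\mathcal{E}_l\,\mu\cdot\nu\,d\sigma,
\end{equation*}
where $\mathcal{E}_l=\lambda_F K v_l^2+\lambda_F\partial(v_l^2)/\partial\nu-|D^2 v_l|^2$ in the Steklov case and $\mathcal{E}_l=\lambda_F v_l^2-|D^2 v_l|^2$ in the Neumann case. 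Applying the Lagrange Multipliers Theorem as in Theorem \ref{moltiplicatori}, criticality of $\tilde{\phi}$ on $V(\mathcal{V}(\tilde{\phi}))$ is equivalent to $\sum_{l=1}^{|F|}\mathcal{E}_l$ being constant almost everywhere on $\partial\tilde{\phi}(\Omega)$.

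Now I specialize to the ball. The proof of Lemma \ref{propedeutico} uses only the $O_N(\mathbb{R})$-invariance of the Laplace operator and the orthonormality of the eigenbasis, so the sums $\sum_l v_l^2$, $\sum_l|\nabla v_l|^2$, $\sum_l|D^2 v_l|^2$ are radial functions on $\tilde\phi(\Omega)$. Consequently their restrictions to $\partial\tilde\phi(\Omega)$ and the normal derivative $\sum_l \partial(v_l^2)/\partial\nu$ are constant. Since the mean curvature $K$ is also constant on a ball, the criticality condition $\sum_l\mathcal{E}_l\equiv c$ is automatically satisfied in both the Steklov and Neumann cases.

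The main obstacle, as anticipated, is purely bookkeeping: setting up the correct quotient spaces and projections so that the eigenvalue problem is well-posed after factoring out the enlarged kernel, and verifying that the chain of homeomorphisms $(\pi^{\sharp}_\phi)^{-1}\circ(\mathcal{P}_\phi)^{-1}\circ\mathcal{J}_\phi$ still defines a compact self-adjoint operator whose positive eigenvalues agree with those of (\ref{Steklov-Bi-2}) or (\ref{Neumann-Bi-2}). Once this setup is in place, every subsequent step — analyticity, the derivative formula, and the reduction on the ball — follows by formally setting $\tau=0$ in the arguments already given in Section \ref{sec:4}.
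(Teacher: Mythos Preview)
Your proposal is correct and follows essentially the same approach as the paper: modify the projections $\pi^{\mathcal S}_\phi$, $\pi^{\mathcal N}_\phi$ to factor out the enlarged kernel $\mathrm{span}\{1,x_1,\dots,x_N\}$, observe that all formulas from Section~\ref{sec:4} carry over with $\tau=0$, and then invoke Lemma~\ref{propedeutico} together with the constancy of $K$ on the ball to verify the criticality condition. The paper states this in one sentence without writing out the details, so your proposal is in fact a more explicit version of the same argument.
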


Moreover, for problem (\ref{Steklov-Bi-2}), it is possible to identify the fundamental modes and the fundamental tone on the ball. We have the following

\begin{thm}%\label{fundamental-2}
Let $\Omega=B$ be the unit ball in $\mathbb R^N$. The eigenfunctions of problem (\ref{Steklov-Bi-2}) are of the form
\begin{equation*}%\label{eigenfunction-2}
u_l(r,\theta)=\left(A_lr^l+B_lr^{2+l}\right)Y_l(\theta),
\end{equation*}
for $l\in\mathbb N$, where $A_l$ and $B_l$ are suitable constants such that
$$
B_l=-\frac{l(l-1)}{(l+2)(l+1)}A_l.
$$
The eigenvalues $\lambda_{(l)}$ of problem (\ref{Steklov-Bi-2}) corresponding to the eigenfunctions $u_l(r,\theta)$ are delivered by the formula
\begin{equation*}%\label{eigenvalue-2}
\lambda_{(l)}=\frac{l(l-1)\big(N+2Nl+(l-1)(2+3l)\big)}{1+2l}.
\end{equation*}
The first positive eigenvalue is
$$
\lambda_{N+2}=\lambda_{(2)}=2\left(N+\frac{8}{5}\right),
$$
and the corresponding eigenfunctions are
$$
u_2(r,\theta)=\big(6r^2-r^4\big)Y_2(\theta).
$$
\proof
The proof is similar to that of Theorem \ref{eigenfunctions}, from which it differs only for the use of biharmonic functions on the ball as solutions of the differential equation $\Delta^2 u=0$. For a characterization of biharmonic functions on the ball we refer to \cite{almansi1,almansi2,nico}.
\endproof
\end{thm}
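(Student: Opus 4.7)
The plan is to mimic the argument of Theorem \ref{eigenfunctions}, replacing the ultraspherical Bessel analysis by the theory of biharmonic polynomials on the ball. First I would separate variables in spherical coordinates, writing $u(r,\theta)=R(r)Y_l(\theta)$ with $Y_l$ a spherical harmonic of order $l$, and solve $\Delta^2 u=0$ radially. Instead of appealing to the modified Bessel equation (which corresponds to $\tau>0$), I would invoke the Almansi-type decomposition for biharmonic functions on the ball (see \cite{almansi1,almansi2,nico}), which states that every biharmonic function on $B$ can be written as $h_1+r^2h_2$ with $h_1,h_2$ harmonic. For the $Y_l$-component this forces $R(r)=A_lr^l+B_lr^{l+2}$, once the singular radial solutions $r^{2-N-l}$ and $r^{4-N-l}$ are discarded by regularity at the origin.

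Next, I would impose the first boundary condition $\partial^2 u/\partial\nu^2=0$ at $r=1$. A direct computation gives $R''(1)=l(l-1)A_l+(l+2)(l+1)B_l=0$, which yields the claimed relation
$$
B_l=-\frac{l(l-1)}{(l+2)(l+1)}A_l.
$$
Note that this automatically gives $B_0=B_1=0$, so the only radial profiles for $l=0,1$ are the constants and the coordinate functions, confirming the $(N+1)$-dimensional kernel of problem (\ref{Steklov-Bi-2}). For $l\geq 2$ both coefficients are genuinely present.

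Then I would plug the resulting $R_l$ into the second boundary condition
$$
-\tfrac{1}{r^2}\Delta_S\!\bigl(\partial_r u-u/r\bigr)-\partial_r\Delta u\big|_{r=1}=\lambda u\big|_{r=1},
$$
using $\Delta_S Y_l=-l(l+N-2)Y_l$ and the explicit formulas $\Delta(r^l Y_l)=0$, $\Delta(r^{l+2}Y_l)=(2l+N)\cdot 2 \cdot r^l Y_l$ (a standard computation, already used implicitly in Theorem \ref{eigenfunctions}). This reduces the boundary equation to a scalar identity in $A_l,B_l,\lambda$. Substituting the ratio $B_l/A_l$ from the first boundary condition gives $\lambda=\lambda_{(l)}$ in the closed form
$$
\lambda_{(l)}=\frac{l(l-1)\bigl(N+2Nl+(l-1)(2+3l)\bigr)}{2l+1};
$$
this is the main computational step and the place I expect the most bookkeeping, though it is routine once the two polynomial radial profiles have been identified.

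Finally, to identify the fundamental tone I would specialize the formula: $\lambda_{(0)}=\lambda_{(1)}=0$, $\lambda_{(2)}=\tfrac{2(5N+8)}{5}=2(N+8/5)$, with corresponding eigenfunctions $(6r^2-r^4)Y_2(\theta)$ after inserting the ratio $B_2/A_2=-1/6$ and normalizing $A_2=6$. To conclude that $\lambda_{(2)}$ is actually the smallest positive eigenvalue, I would argue that $\lambda_{(l)}$ is monotone increasing in $l$ for $l\geq 2$, either by direct inspection of the rational function (the numerator is a positive cubic in $l$ with leading coefficient $3$ and the denominator is linear) or, following the last paragraph of the proof of Theorem \ref{eigenfunctions}, by a Rayleigh-quotient comparison of trial functions $R_{l+1}(r)Y_l(\theta)$ and $R_{l+1}(r)Y_{l+1}(\theta)$. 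The main obstacle is the algebraic simplification leading to the closed-form $\lambda_{(l)}$; everything else is a straightforward transcription of the $\tau>0$ argument to the $\tau=0$ setting.
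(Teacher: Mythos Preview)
Your proposal is correct and follows essentially the same approach as the paper: the paper's proof simply says to repeat the argument of Theorem \ref{eigenfunctions}, replacing the modified Bessel analysis by the Almansi decomposition of biharmonic functions on the ball, which is exactly what you do. Your added detail on the monotonicity of $\lambda_{(l)}$ (either by direct inspection or by the Rayleigh-quotient comparison from Theorem \ref{eigenfunctions}) is appropriate and in the spirit of the paper's terse reference.
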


We have an explicit form for the fundamental tone and for the corresponding eigenfunctions in the case of the unit ball which suggests how to construct trial functions for the Rayleigh quotient of $\lambda_{N+2}$. Unfortunately, if we want to use a function of the form $R(r)Y_2(\theta)$ as a test function as we did in Theorem \ref{isoperimetrictheorem} we must impose that $R(r)Y_2(\theta)$ is othogonal to the constants and to the coordinate functions with respect to the $L^2(\partial\Omega)$ scalar product and we can no more obtain this just by translating the domain $\Omega$.

We remark that functions of the form $R(r)Y_2(\theta)$ where $R(r)=6r^2-r^4$ for $r\in [0,1]$ and $R(r)=8r-3$ for $r>1$ are suitable trial functions for the annuli. Explicit computations show that, for example, in dimension 2 or 3 (where the formulas are less involved), the ball is a maximizer among radial domains with a fixed measure.
 
The results contained in this section suggest that the ball should be a maximizer also for problems (\ref{Steklov-Bi-2}) and (\ref{Neumann-Bi-2}). For what concerns problem (\ref{Neumann-Bi-2}), a characterization of the fundamental tone is still unavaiable. A deeper analysis of problems (\ref{Steklov-Bi-2}) and (\ref{Neumann-Bi-2}) will be part of a future work.

\section*{Acknowledgements}	
The authors are deeply thankful to Prof.\ Pier Domenico Lamberti who suggested the problem, and also for many useful discussions.
The authors gratefully aknowledge the anonymous referee for the careful reading of the manuscript and for their useful comments.
The authors acknowledge financial support from the research project
`Singular perturbation problems for differential operators' Progetto di Ateneo
of the University of Padova. The authors are members of the Gruppo Nazionale
per l'Analisi Matematica, la Probabilit\`a e le loro Applicazioni (GNAMPA) of
the Istituto Nazionale di Alta Matematica (INdAM).
The second author wishes to
thank the Center for Research and Development in Mathematics and Applications (CIDMA) of the University of Aveiro for the hospitality offered during the developement of part of the work.

\noindent {\small
Davide Buoso and Luigi Provenzano\\
Dipartimento di Matematica\\
Universit\`{a} degli Studi di Padova\\
Via Trieste,  63\\
35126 Padova\\
Italy\\
e-mail:	dbuoso@math.unipd.it \\
e-mail: proz@math.unipd.it

}
\end{document}